\newcommand{\Z}{\mathbb{Z}}
\newcommand{\Q}{\mathbb{Q}}
\newcommand{\R}{\mathbb{R}}
\newcommand{\frakb}{\mathfrak{b}}
\newcommand{\frakd}{\mathfrak{d}}
\newcommand{\frakc}{\mathfrak{c}}
\newcommand{\frakr}{\mathfrak{r}}
\newcommand{\Pow}{\mathcal{P}}
\newcommand{\non}{\operatorname{non}}
\newcommand{\add}{\operatorname{add}}
\newcommand{\cof}{\operatorname{cof}}
\newcommand{\nul}{\mathsf{null}}
\newcommand{\meager}{\mathsf{meager}}
\newcommand{\AND}{\mathbin{\&}}
\newcommand{\restrict}{\upharpoonright}
\newcommand{\cp}{\mathfrak{cp}}
\newcommand{\icp}{\mathfrak{icp}}
\newcommand{\zero}{\mymathbb{0}}
\newcommand{\WD}{\mathbf{wd}}
\newcommand{\TD}{\mathcal{D}}
\newcommand{\leT}{\le_\mathrm{T}}
\newcommand{\ZFC}{\mathsf{ZFC}}
\newcommand{\ICP}{\mathbf{ICP}}
\newcommand{\frakB}{\mathbf{B}}
\newcommand{\BIP}{\mathbf{B}^\mathrm{IP}}
\newcommand{\IP}{\mathrm{IP}}
\newcommand{\Match}{\mathrm{Match}}
\newcommand{\Borel}{\mathsf{Borel}}
\newcommand{\leRK}{\le_{\mathrm{RK}}}
\newcommand{\geRK}{\ge_{\mathrm{RK}}}
\newcommand{\fin}{\mathsf{fin}}
\newcommand{\I}{\mathcal{I}}
\newcommand{\mmc}{\mathfrak{mc}}
\newcommand{\mmac}{\mathfrak{mac}}
\newcommand{\comp}{\mathrm{c}}
\newcommand{\seq}[1]{{\langle#1\rangle}}
\DeclarePairedDelimiter\abs{\lvert}{\rvert}
\renewcommand\emptyset{\varnothing}
\renewcommand\subset{\subseteq}
\renewcommand{\setminus}{\smallsetminus}
\newcommand{\needtocheck}[1][]{%
	\ifthenelse{\equal{#1}{}}{%
		\textcolor{blue}{[NeedToCheck]}%
	}{%
		\textcolor{blue}{[NeedToCheck: #1]}%
	}%
}
\newcommand{\todo}[1][]{%
	\ifthenelse{\equal{#1}{}}{%
		\textcolor{red}{[TODO]}%
	}{%
		\textcolor{red}{[TODO: #1]}%
	}%
}
\theoremstyle{definition}
\newtheorem{thm}{Theorem}[section]
\newtheorem*{thm*}{Theorem}
\newtheorem{defi}[thm]{Definition}
\newtheorem*{defi*}{Definition}
\newtheorem{lem}[thm]{Lemma}
\newtheorem*{lem*}{Lemma}
\newtheorem{fact}[thm]{Fact}
\newtheorem*{fact*}{Fact}
\newtheorem*{formula*}{Formula}
\newtheorem{prop}[thm]{Proposition}
\newtheorem*{prop*}{Proposition}
\newtheorem*{exm*}{Example}
\newtheorem*{rmk*}{Remark}
\newtheorem{cor}[thm]{Corollary}
\newtheorem*{cor*}{Corollary}
\newtheorem*{notation*}{Notation}
\newtheorem{question}[thm]{Question}
\DeclareMathAlphabet{\mymathbb}{U}{BOONDOX-ds}{m}{n}
\title{The comparability numbers and the incomparability numbers}
\author{Tatsuya Goto\thanks{Supported by JSPS KAKENHI Grant Number JP22J20021}}
\affil{Graduate School of System Informatics,\\
	Kobe University,\\
	1-1 Rokkodai, Nada-ku,\\
	657-8501 Kobe, Japan.\\
	E-mail: 202x603x@cloud.kobe-u.jp}
\date{\today}
\begin{document}
	\maketitle
	
	\begin{abstract}
		We introduce new cardinal invariants of a poset, called the comparability number and the incomparability number. We determine their value for well-known posets, such as $\omega^\omega$, $\Pow(\omega)/\mathrm{fin}$, the Turing degrees $\mathcal{D}$, the quotient algebra $\Borel(2^\omega)/\nul$, the ideals $\meager$ and $\nul$.
		Moreover, we consider these invariants for the Rudin-Keisler ordering of the nonprincipal ultrafilters on $\omega$.
		We also consider these invariants for ideals on $\omega$ and on $\omega_1$.
	\end{abstract}
	
	\section{Introduction}
	
	As cardinal invariants of a poset, the dominating number and the unbounding number are well-studied.
	In this paper, as new cardinal invariants of a poset, we introduce the comparability number and incomparability number and determine their value for well-known posets.
	
	\begin{defi}
		Let $(P, \le)$ be a poset.
		We say $F \subset P$ is a \textit{dominating family} if for every $p \in P$ there is $q \in F$ such that $p \le q$.
		We say $F \subset P$ is an \textit{unbounded family} if for every $p \in P$ there is $q \in F$ such that $q \not \le p$.
		
		Define cardinal invariants $\frakd(P, \le)$ and $\frakb(P, \le)$ as follows:
		\begin{enumerate}
			\item $\frakd(P, \le) = \min \{ \abs{F} : F \subset P \text{ dominating family} \}$,
			\item $\frakb(P, \le) = \min \{ \abs{F} : F \subset P \text{ unbounded family} \}$.
		\end{enumerate}
		We call $\frakd(P, \le)$ the \textit{dominating number} for $P$ and $\frakb(P, \le)$ the \textit{bounding number} for $P$.
	\end{defi}
	
	\begin{defi}
		Let $(P, \le)$ be a poset.
		We say $F \subset P$ is a \textit{comparable family} if for every $p \in P$ there is $q \in F$ such that either $p \le q$ or $q \le p$ holds.
		We say $F \subset P$ is an \textit{incomparable family} if for every $p \in P$ there is $q \in F$ such that both $p \not \le q$ and $q \not \le p$ holds.
		
		We define cardinal invariants $\cp(P, \le)$ and $\icp(P, \le)$ as follows:
		\begin{enumerate}
			\item $\cp(P, \le) = \min \{ \abs{F} : F \subset P \text{ comparable family} \}$,
			\item $\icp(P, \le) = \min \{ \abs{F} : F \subset P \text{ incomparable family} \}$.
		\end{enumerate}
		We call $\cp(P, \le)$ the \textit{comparability number} for $P$ and $\icp(P, \le)$ the \textit{incomparability number} for $P$.
	\end{defi}
	
	$\cp(P)$ is always defined.	
	On the other hand, $\icp(P)$ may not be defined. $\icp(P)$ is defined if and only if for all $p \in P$ there is $q \in P$ such that $p$ and $q$ are incomparable.
	This is equivalent to $\cp(P) > 1$.
	
	These cardinals are related to dominating numbers and bounding numbers: $\cp(P) \le \frakd(P), \frakd(P^*)$ and $\frakb(P), \frakb(P^*) \le \icp(P)$. Here, $P^*$ is the poset with the reverse ordering of $(P, \le)$.

	As invariants related to comparability numbers and incomparability numbers, we can consider minimal sizes of maximal antichains and maximal chains.
	
	\begin{defi}
		Let $(P, \le)$ be a poset.
		A subset $C \subset P$ is called a chain of $P$ if members of $C$ are pairwise comparable.
		Similarly, a subset $A \subset P$ is called an antichain of $P$ if members of $C$ are pairwise incomparable.
		
		Define invariants $\mmc(P)$ and $\mmac(P)$ as follows:
		\begin{enumerate}
			\item $\mmc(P) = \min \{ \abs{C} : C \subset P \text{ maximal chain} \}$, and
			\item $\mmac(P) = \min \{ \abs{A} : A \subset P \text{ maximal antichain} \}$.
		\end{enumerate}
	\end{defi}
	
	As can be easily seen, a maximal antichain of $P$ is a comparable family of $P$. So we have $\cp(P) \le \mmac(P)$.
	If $\icp(P)$ is defined, then we also have $\icp(P) \le \mmc(P)$.
	So we can draw a picture as in Figure \ref{fig:mmcandmmac}.
	
	\begin{figure}[h]
		\[
		\begin{tikzpicture}[scale=0.7]
			\node (d1) at (0, 0) {$\frakd(P)$};
			\node (b1) at (0, -5.2) {$\frakb(P)$};
			\node (d2) at (5, 0) {$\frakd(P^*)$};
			\node (b2) at (5, -5.2) {$\frakb(P^*)$};
			\node (cp) at (2.5, -1.5) {$\cp(P)$};
			\node (icp) at (2.5, -4) {$\icp(P)$};
			\node (mmac) at (2.5, 0) {$\mmac(P)$};
			\node (mmc) at (2.5, -2.7) {$\mmc(P)$};
			\draw[->] (b1) -- (d1);
			\draw[->] (b2) -- (d2);
			\draw[->] (b1) -- (icp);
			\draw[->] (b2) -- (icp);
			\draw[->] (cp) -- (d1);
			\draw[->] (cp) -- (d2);
			\draw[->] (cp) -- (mmac);
			\draw[->] (icp) -- (mmc);
		\end{tikzpicture}
		\]
		\caption{Relationships}
		\label{fig:mmcandmmac}
	\end{figure}
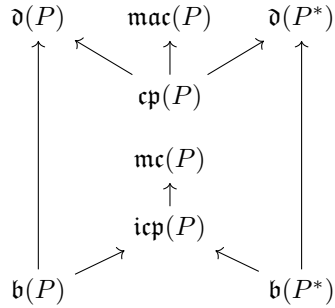
	
	The results in Table \ref{table:known} are well-known.
	
	Table \ref{table:allresults} summarizes almost all results we will prove in this paper.
	
	As results not listed in the table, in Section \ref{sec:idealsonomega}, we treat ideals on $\omega$, and in Section \ref{sec:idealonomega1}, we treat ideals on $\omega_1$.

	\begin{table}[H]
		\centering
		\begin{tabular}{l||l|l|l|l}
			\hline
			$P$ & $\frakd(P)$ & $\frakb(P)$ & $\frakd(P^*)$ & $\frakb(P^*)$   \\ \hline  \hline
			$(\omega^\omega \setminus \zero, \le^*)$ & $\frakd$ & $\frakb$ & $\frakc$ & $2$   \\ \hline
			$(\Pow(\omega)/\mathsf{fin})^-$ & $\frakc$ & $2$ & $\frakc$ & $2$   \\ \hline
			%$(\Z^\omega, \le^*)$   & $\frakd$              & $\frakb$ & $\frakd$              & $\frakb$          \\ \hline
			%$(\R^\omega, \le^*)$   & $\frakd$              & $\frakb$ & $\frakd$              & $\frakb$         \\ \hline 
			$(\Borel(2^\omega)/\meager)^-$ & $\aleph_0$ & $2$ & $\aleph_0$ & $2$       \\ \hline
			$(\Borel(2^\omega)/\nul)^-$ & $\cof(\nul)$ & $2$ & $\cof(\nul)$ & $2$        \\ \hline
			$(\nul \setminus \{\emptyset\}, \subset)$ & $\cof(\nul)$ & $\add(\nul)$ & $\frakc$ & $2$         \\ \hline
			$(\meager \setminus \{\emptyset\}, \subset)$ & $\cof(\meager)$ & $\add(\meager)$ & $\frakc$ & $2$  \\ \hline
			$\text{the Turing degrees}$ & $\frakc$ & $\aleph_1$ & $\frakc$ & $2$ \\ \hline
			$(\beta \omega \setminus \omega, \leRK)$ & $2^\frakc$ & $\frakc^+$ & depends & depends  \\ \hline
		\end{tabular}	
		\caption{\label{table:known}Known results}
	\end{table}
	
	\begin{table}[H]
		\centering
		\begin{tabular}{l||l|l|l|l}
			\hline
			$P$ & $\cp(P)$       & $\icp(P)$ & $\mmac(P)$ & $\mmc(P)$ \\ \hline  \hline
			$(\omega^\omega \setminus \zero, \le^*)$ & $\frakd$ & $\frakb$ & $\frakc$ & $\frakc$ \\ \hline
			$(\Pow(\omega)/\mathsf{fin})^-$ & $\frakr$ & $2$ & $\frakc$ \tablefootnote{This result was obtained by \cite{ahp2016}}& $\frakc$ \\ \hline
			%$(\Z^\omega, \le^*)$ & $\frakd$ & $\frakb$ \\ \hline
			%$(\R^\omega, \le^*)$ & $\frakd$ & $\frakb$   \\ \hline 
			$(\Borel(2^\omega)/\meager)^-$ & $\aleph_0$ & $2$ & ? & $\frakc$ \\ \hline
			$(\Borel(2^\omega)/\nul)^-$ & $\cof(\nul)$ & $2$ & ? & $\frakc$ \\ \hline
			$(\nul \setminus \{\emptyset\}, \subset)$ & $\cof(\nul)$ & $\add(\nul)$ & $\frakc$ & $\non(\nul)$ \\ \hline
			$(\meager \setminus \{\emptyset\}, \subset)$ & $\cof(\meager)$ & $\add(\meager)$ & $\frakc$ & $\non(\meager)$ \\ \hline
			$\text{the Turing degrees}$ & $\frakc$ & $\aleph_1$ & $\frakc$ & $\aleph_1$ \\ \hline
			$(\beta \omega \setminus \omega, \leRK)$   & depends & $\frakc^+$ or undefined & ? & $\frakc^+$ \\ \hline
		\end{tabular}	
		\caption{\label{table:allresults}Our results}
	\end{table}
	
	$\nul$ and $\meager$ denote the Lebesgue measure zero ideal for $2^\omega$ and the meager ideal for $2^\omega$, respectively.
	
	$\frakc$ denotes the cardinality of the continuum: $\frakc = 2^{\aleph_0}$.
	
	In this paper, we use famous cardinal invariants of the continuum: $\frakb$, $\frakd$, $\frakr$, $\add(\nul)$, $\add(\meager)$, $\non(\nul)$, $\non(\meager)$, $\cof(\nul)$ and $\cof(\meager)$.
	For the definition of these invariants, see \cite{blass2010combinatorial}. 
	
	Finally, we give an example of a poset with small comparability number.
	Let $P = \{0, 1\} \times \Z$ and order $P$ by
	\[(i, m) \le (j, n) \iff (i = j \land m \le n) \lor (i \ne j \land m < n).\]
	
	Then, since $\{(0, 0), (0, 1)\}$ is a maximal antichain, we have $\mmac(P) = \cp(P) = 2$.
	On the other hand, we have $\frakd(P) = \frakd(P^*) = \frakb(P) =  \frakb(P^*) = \icp(P) = \aleph_0$.
	
	\section{General lemmas}\label{sec:lemmas}
	
	The following 3 lemmas are well known and easy to see.
	
	\begin{lem}\label{lem:embedsq}
		Let $P$ be a poset. Suppose that $P$ has the following property:
		\begin{align}
			\tag{$*$} \text{If $a < b$ in $P$ then there is $c \in P$ such that $a < c < b$.}
		\end{align}
		Then $P$ embeds the set of rational numbers $\Q$.
	\end{lem}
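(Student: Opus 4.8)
The plan is to build an order-embedding $f : \Q \to P$ directly, using property $(*)$ to interpolate between points already placed. First I note that the statement should be read with the tacit nondegeneracy assumption that $P$ contains at least one comparable pair $a_0 < b_0$: if $P$ were an antichain, then $(*)$ would hold vacuously and yet $\Q$ could not embed, so such a pair must be available. Fix one.

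The naive approach of enumerating $\Q = \{q_n : n < \omega\}$ and inserting the $q_n$ one at a time breaks down exactly when some $q_n$ is below (or above) all previously placed rationals: property $(*)$ only yields a point strictly between two given points and says nothing about finding points below a minimum or above a maximum, and $P$ may genuinely have such extremal elements. To sidestep this I will instead embed a fixed countable dense linear order that already sits inside a bounded interval, namely the set $D$ of dyadic rationals in $(0,1)$, sending it into the portion of $P$ strictly between $a_0$ and $b_0$.

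Concretely, I construct elements $(p_q)_{q \in D}$ of $P$ by recursion on the dyadic level, adopting the convention $p_0 := a_0$ and $p_1 := b_0$ at the (non-embedded) endpoints. At level $n \ge 1$ each new dyadic $q = (2k+1)/2^{n}$ has an immediate predecessor $q^-$ and immediate successor $q^+$ among $\{0,1\} \cup \{\text{dyadics of level} < n\}$, for which $p_{q^-}$ and $p_{q^+}$ have already been defined and satisfy $p_{q^-} < p_{q^+}$; property $(*)$ then supplies $p_q$ with $p_{q^-} < p_q < p_{q^+}$. A routine induction on levels shows that $q < q'$ implies $p_q < p_{q'}$ for all $q, q' \in D$, so $q \mapsto p_q$ is an order-embedding of $(D, <)$ into $(P, \le)$. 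Since $(D, <)$ is a countable dense linear order without endpoints, Cantor's theorem gives an order-isomorphism $(\Q, <) \cong (D, <)$; composing it with $q \mapsto p_q$ produces the desired embedding of $\Q$ into $P$.

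The only genuine obstacle is the one flagged above — that $(*)$ provides interpolation but no extension past extrema — and the entire purpose of routing the argument through the bounded order $D$ with fixed virtual endpoints $a_0, b_0$ is to avoid it. Everything else is the standard bookkeeping needed to verify order-preservation.
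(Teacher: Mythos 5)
Your proof is correct, and it is exactly the standard argument the paper has in mind (the paper omits the proof entirely, dismissing the lemma as ``well known and easy to see''): interpolate dyadic rationals between a fixed comparable pair via $(*)$, then invoke Cantor's characterization of countable dense linear orders without endpoints. Your observation that the statement needs the tacit hypothesis that $P$ contains at least one comparable pair is a genuine and correct refinement --- as literally stated the lemma fails for any nonempty antichain, where $(*)$ holds vacuously --- and your device of treating $a_0, b_0$ as virtual endpoints of the dyadics in $(0,1)$ cleanly handles the only real pitfall, namely that $(*)$ gives interpolation but no extension past extremal elements.
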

	
	\begin{lem}\label{lem:embedsr}
		Let $P$ be a poset. Assume $P$ has the property in Lemma \ref{lem:embedsq}. Moreover, suppose that $P$ has the following property:
		\begin{align*}
			&\tag{$**$} \text{If $\seq{a_n : n \in \omega}$ is an increasing sequence of $P$ and} \\
			&\hspace{2ex} \text{$\seq{b_m : m \in \omega}$ is a decreasing sequence of $P$ and } \\
			&\hspace{4ex} \text{$(\forall n, m \in \omega)(a_n < b_m)$ holds,}\\
			&\hspace{6ex} \text{then there is $c \in P$ such that $(\forall n, m \in \omega)(a_n < c < b_m)$.}
		\end{align*}
		Then $P$ embeds the set of real numbers $\R$.
	\end{lem}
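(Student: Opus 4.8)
The plan is to start from the embedding $f \colon \Q \to P$ produced by Lemma \ref{lem:embedsq} and extend it to an order embedding $g \colon \R \to P$, using property $(**)$ to supply images for the irrationals via their Dedekind cuts.

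First I would fix an order embedding $f \colon \Q \to P$, which exists by Lemma \ref{lem:embedsq} since $P$ has property $(*)$. I define $g$ on the rationals by $g(q) = f(q)$. For an irrational $r$, I choose a strictly increasing sequence $\seq{q_n : n \in \omega}$ of rationals with $q_n \to r$ from below and a strictly decreasing sequence $\seq{p_m : m \in \omega}$ of rationals with $p_m \to r$ from above. Then $\seq{f(q_n) : n \in \omega}$ is increasing, $\seq{f(p_m) : m \in \omega}$ is decreasing, and $f(q_n) < f(p_m)$ for all $n, m$ because $q_n < p_m$ in $\Q$ and $f$ preserves the order. Property $(**)$ then yields some $c \in P$ with $f(q_n) < c < f(p_m)$ for all $n, m$; I set $g(r) = c$. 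The choice of $c$ is made once for each irrational, using the axiom of choice; it need not be unique, but only the sandwiching inequalities will matter in what follows.

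Next I would verify that $g$ is order-preserving, i.e.\ that $r < s$ implies $g(r) < g(s)$, by a short case analysis on which of $r, s$ are rational. The common mechanism is density of $\Q$ in $\R$ together with the defining inequalities of $g$: whenever $r < s$ I insert rationals strictly between them and use that the approximating sequences of an irrational endpoint eventually pass any fixed rational on the correct side. For instance, if both $r$ and $s$ are irrational, I choose rationals with $r < q < q' < s$; the decreasing sequence for $r$ eventually drops below $q$, giving $g(r) < f(q)$, and the increasing sequence for $s$ eventually rises above $q'$, giving $f(q') < g(s)$, so that $g(r) < f(q) < f(q') < g(s)$. The remaining cases, where one or both endpoints are rational, are handled in the same way and are slightly simpler.

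Finally, since $\R$ is linearly ordered and $P$ is antisymmetric, order-preservation immediately upgrades to an embedding: if $g(r) < g(s)$ then $r < s$, since $r = s$ would force $g(r) = g(s)$ and $s < r$ would force $g(s) < g(r)$, contradicting antisymmetry. I expect the only real care to lie in organizing the case analysis cleanly, so that the single idea of inserting a rational and invoking the eventual passage of the approximating sequence covers every case uniformly rather than being repeated four times; this is routine once the mechanism is isolated.
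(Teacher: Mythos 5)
Your proposal is correct: the paper states this lemma without proof, declaring it well known, and your argument --- extending the embedding $f \colon \Q \to P$ from Lemma \ref{lem:embedsq} to the irrationals via Dedekind cuts, using $(**)$ to insert a point between $\seq{f(q_n) : n \in \omega}$ and $\seq{f(p_m) : m \in \omega}$, then checking strict order preservation by interpolating rationals --- is exactly the standard argument the paper intends. No gaps: the eventual-passage step legitimately yields $g(r) < f(q) < f(q') < g(s)$ by transitivity, and strict order preservation on the linear order $\R$ does upgrade to an embedding as you say.
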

	
	\begin{lem}
		Both ($*$) and ($**$) in Lemma \ref{lem:embedsq} and \ref{lem:embedsr} are inherited by any maximal chains.
	\end{lem}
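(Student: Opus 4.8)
The plan is to lean on the standard characterization of maximal chains: a chain $C \subseteq P$ is maximal if and only if every $x \in P$ that is comparable to all members of $C$ already lies in $C$; equivalently, every $x \in P \setminus C$ is incomparable to some member of $C$. I would record this first, since both halves are immediate (given $x \notin C$, maximality forbids $C \cup \{x\}$ from being a chain, which is exactly the existence of an incomparable $d \in C$). The whole point of the argument is that the density/gap witness supplied by $P$ need not lie on $C$, so this characterization is the tool that converts an off-chain witness into a usable contradiction.

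For ($*$): let $C$ be a maximal chain and suppose $a < b$ in $C$. Aiming for a contradiction, I would assume no element of $C$ lies strictly between $a$ and $b$. Property ($*$) for $P$ gives $c \in P$ with $a < c < b$; by the assumption $c \notin C$, so maximality yields $d \in C$ incomparable to $c$. Since $C$ is a chain, $d$ is comparable to both $a$ and $b$, and I would split into cases: if $d \le a$ then $d \le a < c$ gives $d < c$, and if $b \le d$ then $c < b \le d$ gives $c < d$, each contradicting the incomparability of $c$ and $d$. The only remaining possibility is $a < d < b$, which is excluded by the assumption. Hence some element of $C$ lies strictly between $a$ and $b$, so $C$ satisfies ($*$).

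For ($**$) the argument runs in parallel. Given an increasing sequence $\langle a_n : n \in \omega\rangle$ and a decreasing sequence $\langle b_m : m \in \omega\rangle$ in $C$ with $a_n < b_m$ for all $n,m$, I would assume toward a contradiction that no $c \in C$ satisfies $a_n < c < b_m$ for all $n,m$. Property ($**$) for $P$ yields such a $c \in P$; it is not in $C$ by assumption, so maximality gives $d \in C$ incomparable to $c$. Since $d$ is comparable to every $a_n$ and every $b_m$, if $d \le a_n$ for some $n$ then $d < c$, and if $b_m \le d$ for some $m$ then $c < d$, both contradicting incomparability. Therefore $a_n < d < b_m$ for all $n,m$, making $d$ itself the required element of $C$ — contrary to the assumption.

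I do not expect a serious obstacle here: once the maximal-chain characterization is in hand, both proofs are short case analyses. The one point to watch is bookkeeping with the non-strict inequalities — verifying that comparability of $d$ with the endpoints (resp. with every term of the two sequences) genuinely exhausts into the three regions $d \le a$, $a < d < b$, $b \le d$ (resp. $d \le a_n$ for some $n$, $b_m \le d$ for some $m$, or strictly between all of them), so that the "strictly between" region is the only survivor and coincides with what maximality forces.
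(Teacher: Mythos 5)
Your proof is correct: the maximal-chain characterization (every $x \in P \setminus C$ is incomparable to some $d \in C$) followed by the exhaustive case split $d \le a$, $b \le d$, or $a < d < b$ (and its sequence analogue) is airtight, including the passage from comparability plus the excluded cases to the strict inequalities. The paper gives no proof at all --- it labels this lemma ``well known and easy to see'' --- and your argument is evidently the intended one, so there is nothing to contrast.
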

	
	\section{The cardinal invariants of $\omega^\omega$}
	
	In this section, we determine the comparability number and the incomparability number of $\omega^\omega$ as a first result.
	
	\begin{defi}
		Let $\zero$ be the set of eventually zero reals, that is,
		\[\zero = \{ x \in \omega^\omega: (\forall^\infty n)(x(n) = 0)\}.\]
		For $x, y \in \omega^\omega$, the relation $x \le^* y$ means that $(\forall^\infty n)(x(n) \le y(n))$.
		And for $x, y \in \omega^\omega$, the relation $x <^\infty y$ means that $\neg (y \le^* x)$, that is $(\exists^\infty n)(x(n) < y(n))$.
		
		Here, $(\forall^\infty n)$ and $(\exists^\infty n)$ are shortcuts for ``for all but finitely many $n$" and ``there exist infinitely many $n$" respectively.
	\end{defi}
	
	We consider the poset $(\omega^\omega \setminus \zero, \le^*)$.
	
	\begin{lem}
		$\frakb \le \icp(\omega^\omega \setminus \zero)$ and $\cp(\omega^\omega \setminus \zero) \le \frakd$ hold.
	\end{lem}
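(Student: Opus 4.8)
The plan is to obtain each inequality by combining a soft poset-theoretic observation with the standard fact that deleting the eventually zero reals changes neither $\frakb$ nor $\frakd$. The two observations are the reasons behind the general inequalities recorded in the introduction: every $\le^*$-dominating family of $(\omega^\omega \setminus \zero, \le^*)$ is a comparable family, because $p \le^* q$ already makes $p$ and $q$ comparable, so that $\cp(\omega^\omega \setminus \zero) \le \frakd(\omega^\omega \setminus \zero, \le^*)$; and every incomparable family is unbounded, because the incomparability of $p$ and $q$ entails in particular $q \not\le^* p$, so that $\frakb(\omega^\omega \setminus \zero, \le^*) \le \icp(\omega^\omega \setminus \zero)$. (Here $\icp(\omega^\omega \setminus \zero)$ is defined, since $\omega^\omega \setminus \zero$ has incomparable pairs.) It therefore remains to compute the two right-hand, respectively left-hand, sides.

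For the bound $\cp(\omega^\omega \setminus \zero) \le \frakd$ it suffices to show $\frakd(\omega^\omega \setminus \zero, \le^*) \le \frakd$. I would start from a $\le^*$-dominating family $D \subseteq \omega^\omega$ with $\abs{D} = \frakd$ and replace each $f \in D$ by its pointwise successor $f + 1$, obtaining $D' = \{f + 1 : f \in D\}$. Since $f + 1 \ge 1$ everywhere, every member of $D'$ lies in $\omega^\omega \setminus \zero$, and $D'$ is still dominating in $\omega^\omega$: any $g \in \omega^\omega$ has some $f \in D$ with $g \le^* f \le^* f + 1$. In particular $D'$ dominates $\omega^\omega \setminus \zero$, so this subposet has a dominating family of size $\le \frakd$, and the inequality follows.

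For the bound $\frakb \le \icp(\omega^\omega \setminus \zero)$ it suffices to show $\frakb \le \frakb(\omega^\omega \setminus \zero, \le^*)$. Here I would take a minimal unbounded family $F \subseteq \omega^\omega \setminus \zero$ and check that it is already unbounded in all of $\omega^\omega$, which forces $\abs{F} \ge \frakb$. Given $g \in \omega^\omega$: if $g \notin \zero$, the unboundedness of $F$ in the subposet supplies $q \in F$ with $q \not\le^* g$; if $g \in \zero$, then any $q \in F$ works, because $q \le^* g$ would force $q$ to be eventually zero, contradicting $q \in \omega^\omega \setminus \zero$. Thus $F$ is unbounded in $\omega^\omega$ and $\abs{F} \ge \frakb$.

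The only real content beyond bookkeeping is the handling of $\zero$, i.e. checking that the eventually zero reals are negligible for both invariants: under $\le^*$ they sit below everything, so they neither help a family to be unbounded nor obstruct the successor trick that keeps a dominating family outside $\zero$. Everything else reduces to the two one-line order observations above, so I do not expect a genuine obstacle.
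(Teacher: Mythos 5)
Your proposal is correct and is essentially the paper's argument: the paper dismisses this lemma as ``immediate from the definition,'' and what you have written is precisely the spelled-out version of that immediacy (dominating families are comparable families, incomparable families are unbounded families, and the eventually zero reals are negligible via the $f+1$ trick and the observation that $q \le^* g \in \zero$ forces $q \in \zero$). No gaps; your write-up just makes explicit the bookkeeping the paper leaves to the reader.
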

	\begin{proof}
		This is immediate from the definition.	
	\end{proof}
	
	\begin{defi}
		Define relational systems $\ICP, \frakB$ and $\BIP$ as follows:
		\begin{enumerate}
			\item $\ICP = (\omega^\omega \setminus \zero, \omega^\omega \setminus \zero, <^\infty \cap >^\infty)$.
			\item $\frakB = (\omega^\omega, \omega^\omega, <^\infty)$.
			\item $\BIP = (\IP, \IP, \{ (\mathbb{I}, \mathbb{J}) : (\exists^\infty n)(\forall k)(J_k \not \subset I_n) \})$.	
		\end{enumerate}
		Here $\IP$ is the set of all interval partition of $\omega$, $\mathbb{I} = \seq{ I_n : n \in \omega}$ and $\mathbb{J} = \seq{ J_k : k \in \omega}$.
	\end{defi}
	
	It is well-known that $\frakB$ and $\BIP$ are Tukey equivalent (for example, see \cite[Theorem 2.10]{blass2010combinatorial}).
	
	\begin{thm}
		$\icp(\omega^\omega \setminus \zero) = \frakb$ and $\cp(\omega^\omega \setminus \zero) = \frakd$ hold.
	\end{thm}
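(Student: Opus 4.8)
The plan is to reduce each equality to a single inequality. The lemma immediately preceding the theorem already gives $\frakb \le \icp(\omega^\omega \setminus \zero)$ and $\cp(\omega^\omega\setminus\zero)\le\frakd$, so it remains to prove $\icp(\omega^\omega\setminus\zero)\le\frakb$ and $\frakd\le\cp(\omega^\omega\setminus\zero)$. It is worth keeping in mind that $F$ is an incomparable family exactly when it is a dominating family for the relational system $\ICP$, whose relation $<^\infty\cap\,{>^\infty}$ is precisely $\le^*$-incomparability; this is the reason one expects $\icp$ to be governed by $\frakB$ and hence to equal $\frakb$. I will, however, carry out both remaining inequalities as direct constructions.

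For $\icp(\omega^\omega\setminus\zero)\le\frakb$, I would start from a $\le^*$-unbounded family $\{f_\alpha:\alpha<\frakb\}$ with each $f_\alpha\ge 1$ everywhere, and split $\omega$ into the evens $E_0$ and the odds $E_1$. For $\alpha<\frakb$ and $i<2$, let $y_{\alpha,i}$ equal $f_\alpha$ on $E_i$ and $0$ on $E_{1-i}$; each $y_{\alpha,i}$ lies in $\omega^\omega\setminus\zero$, being positive on the infinite set $E_i$. The family $\{y_{\alpha,i}:\alpha<\frakb,\,i<2\}$ has size $\frakb$, and I claim it is incomparable. Given $x\in\omega^\omega\setminus\zero$, its support is infinite, hence meets $E_0$ or $E_1$ infinitely; choosing $i$ so that the support meets $E_{1-i}$ infinitely, we get $x(n)>0=y_{\alpha,i}(n)$ for infinitely many $n\in E_{1-i}$, i.e. $y_{\alpha,i}\not\ge^* x$, and this for every $\alpha$. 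Since $\{f_\alpha\restriction E_i\}$ is still unbounded on the infinite set $E_i$, some $\alpha$ yields $f_\alpha(n)>x(n)$ for infinitely many $n\in E_i$, i.e. $y_{\alpha,i}\not\le^* x$. This single $y_{\alpha,i}$ is then incomparable with $x$.

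For $\frakd\le\cp(\omega^\omega\setminus\zero)$, I would argue contrapositively: given $F\subseteq\omega^\omega\setminus\zero$ with $\abs{F}<\frakd$, I construct one $x$ incomparable with every member, witnessing that $F$ is not comparable. First I need an infinite, co-infinite $Z\subseteq\omega$ meeting the support $P_y$ of every $y\in F$ infinitely. Such a $Z$ exists because $\abs{F}<\frakd$: otherwise $\{P_y\}$ would be downward dense in $([\omega]^\omega,\subseteq^*)$, and from any increasing $g$, passing to a sparsified range such as $\{g(2n):n\in\omega\}$ and taking the member almost contained in it produces a function $\ge^* g$; the enumerating functions of $\{P_y\}$ would then form a dominating family of size $<\frakd$, a contradiction. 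Next, since $\abs{F}<\frakd$ and $\omega\setminus Z$ is infinite, the restrictions $\{y\restriction(\omega\setminus Z)\}$ are not dominating on $\omega\setminus Z$, so I can choose $g\ge 1$ on $\omega\setminus Z$ with $g(n)>y(n)$ for infinitely many $n\in\omega\setminus Z$, for every $y$. Define $x$ to be $0$ on $Z$ and $g$ on $\omega\setminus Z$; then $x\notin\zero$, and for each $y$: at the infinitely many $n\in Z$ with $y(n)>0$ we have $x(n)=0<y(n)$, so $y\not\le^* x$, while at the infinitely many $n\in\omega\setminus Z$ with $g(n)>y(n)$ we have $x(n)>y(n)$, so $x\not\le^* y$. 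Thus $x$ is incomparable with $y$.

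The main obstacle is exactly this last construction. The two demands on $x$ — that every member beat it infinitely often and that it beat every member infinitely often — pull toward smallness and largeness respectively, and no fixed pattern for $x$ can succeed, since an adversary may place a member's support so as to dodge any prescribed zero-set. The place where $\frakd$ genuinely enters is the existence of the zero-set $Z$: the relevant quantity is the least size of a downward-dense family in $([\omega]^\omega,\subseteq^*)$, which I must show is at least $\frakd$ by the enumeration/domination argument above. Once $Z$ is fixed, the ``beating'' half is routine non-domination on the infinite set $\omega\setminus Z$. I expect the downward-density bound to be the only non-formal step; the remainder is bookkeeping with $\le^*$.
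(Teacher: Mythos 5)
Your route is genuinely different from the paper's. The paper derives both equalities at once from a single Tukey morphism $\ICP \to \BIP$, where $\ICP = (\omega^\omega \setminus \zero, \omega^\omega \setminus \zero, <^\infty \cap >^\infty)$ and $\BIP$ is the interval-partition system known to be Tukey equivalent to $(\omega^\omega, \omega^\omega, <^\infty)$; the combinatorial heart there is the definition of $\varphi$ and $\psi$ via interval partitions. You instead prove the two nontrivial inequalities by separate direct constructions. Your $\frakd \le \cp(\omega^\omega\setminus\zero)$ half is correct as written: the existence of an infinite co-infinite $Z$ meeting every support infinitely is exactly where $\frakd$ enters, and your enumeration argument (a family that is downward $\subset^*$-dense among infinite co-infinite sets yields a dominating family of the same size) is sound; the rest is routine non-domination on $\omega\setminus Z$. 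This is more elementary and self-contained than the paper's proof, at the cost of not isolating a reusable Tukey reduction (which is what the paper's morphism buys, and what its later open questions about Tukey-style proofs refer to).

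There is, however, one incorrect step in your $\icp(\omega^\omega\setminus\zero) \le \frakb$ half: the claim that $\{ f_\alpha \restrict E_i : \alpha < \frakb \}$ ``is still unbounded on $E_i$'' is false for an arbitrary unbounded family. For instance, replace each $f_\alpha$ by the function equal to $f_\alpha$ on $E_0$ and constantly $1$ on $E_1$: the family remains unbounded, but its restriction to $E_1$ is bounded. This matters because your choice of $i$ is forced by $x$ when the support of $x$ meets only one half infinitely, so you cannot always steer toward the unbounded side. Fortunately the gap is local and your construction survives, by either of two repairs. (a) Assume, as one may, that every $f_\alpha$ is nondecreasing: if $f$ is nondecreasing and $f \restrict E \le^* h$ for an infinite $E$, then $f \le^* \bar h$ where $\bar h(n) = h(\min\{ m \in E : m \ge n \})$, so a bound for all restrictions would glue to a bound for the whole family; hence every restriction family is unbounded and your argument goes through verbatim. (b) Alternatively, split into cases: if the support of $x$ meets both $E_0$ and $E_1$ infinitely, then $x \not\le^* y_{\alpha,i}$ holds for every $\alpha$ and $i$, and if moreover $y_{\alpha,i} \le^* x$ held for all $\alpha, i$, then gluing the two halves would give $f_\alpha \le^* x$ for all $\alpha$, contradicting unboundedness; if instead the support of $x$ is almost contained in $E_j$, take $i = 1-j$: then $x$ vanishes almost everywhere on $E_i$, so $f_\alpha \ge 1$ gives $y_{\alpha,i} \not\le^* x$ trivially, while $x > 0$ infinitely often on $E_{1-i}$ gives the other direction. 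With either repair your proposal is a complete and correct proof, by a more hands-on method than the paper's.
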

	\begin{proof}
		We construct a Tukey morphism $\ICP \to \BIP$.
		
		So we have to construct maps $\varphi \colon \omega^\omega \setminus \zero \to \IP$ and $\psi \colon \IP \to \omega^\omega \setminus \zero$ that satisfy the following condition:
		\begin{align*}
			&\text{If $x \in \omega^\omega \setminus \zero$, $\mathbb{J} = \seq{ J_k : k \in \omega} \in \IP$ satisfy} \\
			&\hspace{0.5cm}(\exists^\infty n)(\forall k)(J_k \not \subset \varphi(x)(n)) \text{ then $x <^\infty \psi(\mathbb{J})$ and $x >^\infty \psi(\mathbb{J})$}.
		\end{align*}
		Enumerate $\{n : x(n) > 0\}$ by $\{n : x(n) > 0\} = \{a^x_0 < a^x_1 < a^x_2 < \dots\}$.
		Define $\varphi$ and $\psi$ by the following way:
		\[
		\varphi_1(x)(n) = [i_n, i_{n+1}],
		\]
		where $i_0 = 0$ and $i_{n+1}$ are such that the interval $[i_n, i_{n+1})$ contains at least 3 points of the form $a^x_j$ and for all $a \le i_n$, $x(a) \le i_{n+1}$
		and
		\[
		\psi(\mathbb{J})(n) = \begin{cases}
			\min J_{k+2} & \text{ (if $n \in J_k$ and $n = \min J_k$)} \\
			0 & \text{ (if $n \in J_k$ and $n > \min J_k$)}.
		\end{cases}
		\]
		We first show that $x >^\infty \psi(\mathbb{J})$.
		Take $n_0 \in \omega$ arbitrarily. Then we can take $n > n_0$ such that $(\forall k)(J_k \not \subset \varphi_1(x)(n))$.
		Let $I_n = \varphi_1(x)(n)$.
		Then we take $k$ such that $I_n \cap J_k \ne \emptyset$. 
		Note that the number of such $k$ is less than or equal to $2$.
		But we have at least $3$ points $a^x_i$ in $I_n$.
		So we can take $a^x_i \in I_n$ that is not the leftmost point of intervals in $\mathbb{J}$.
		We have $a^x_i \ge a^x_{3n} \ge 3n > n_0$, $x(a^x_{i}) > 0$ and $\psi(\mathbb{J})(a^x_i) = 0$.
		Thus we have $x >^\infty \psi(\mathbb{J})$.
		
		We next prove $x <^\infty \psi(\mathbb{J})$.
		Let $k_0 \in \omega$.
		By $(\exists^\infty n)(\forall k)(J_k \not \subset \varphi(x)(n))$, we can take $n$ such that $i_n > j_{k_0}$ and $(\forall k)(J_k \not \subset I_n)$.
		Let $k$ be such that $i_n \in J_k$.
		Then $j_k \le i_n$ and $i_{n+1} < j_{k+2}$ since there are at most $2$ intervals in $\mathbb{J}$ touching $I_n$.
		By the choice of $i_{n+1}$, we have $x(j_k) \le i_{n+1} < j_{k+2}$. Thus $x(j_k) < \psi(\mathbb{J})(j_k)$.
		Also, by $i_n \in J_k$, we have $i_n < j_{k+1}$.
		So $j_{k_0} < i_n < j_{k+1}$. Thus $k_0 \le k$.
		Thus we have proved $x <^\infty \psi(\mathbb{J})$.
	\end{proof}
	
	\begin{thm}\label{thm:mmcomegaomega}
		$\mmc(\omega^\omega \setminus \zero) = \frakc$.
	\end{thm}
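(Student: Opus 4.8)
The plan is to prove the two inequalities $\mmc(\omega^\omega \setminus \zero) \le \frakc$ and $\mmc(\omega^\omega \setminus \zero) \ge \frakc$ separately. The first is immediate: $\abs{\omega^\omega \setminus \zero} = \frakc$, so every chain has size at most $\frakc$, and a maximal chain exists by Zorn's lemma. For the reverse inequality I would show that \emph{every} maximal chain has cardinality at least $\frakc$. The strategy is to verify that $P = (\omega^\omega \setminus \zero, \le^*)$ satisfies both $(*)$ and $(**)$ from Lemmas~\ref{lem:embedsq} and~\ref{lem:embedsr}. Granting this, the inheritance lemma of Section~\ref{sec:lemmas} shows that any maximal chain $C$ of $P$, regarded as a poset in its own right, again satisfies $(*)$ and $(**)$; then Lemma~\ref{lem:embedsr} produces an embedding of $\R$ into $C$, whence $\abs{C} \ge \abs{\R} = \frakc$. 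Together with the trivial bound this gives $\abs{C} = \frakc$ for every maximal chain, and in particular $\mmc(\omega^\omega \setminus \zero) = \frakc$. Throughout, write $a <^* b$ for $a \le^* b \land \neg(b \le^* a)$.

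Verifying $(*)$ is routine. If $a <^* b$ in $P$, then $S = \{ n : a(n) < b(n) \}$ is infinite, so we split it as $S = S_0 \sqcup S_1$ with both $S_0, S_1$ infinite. Define $c$ by $c(n) = b(n)$ for $n \in S_0$ and $c(n) = a(n)$ otherwise. Then $a \le c \le b$ pointwise, hence $a \le^* c \le^* b$; moreover $c > a$ on the infinite set $S_0$ and $c < b$ on the infinite set $S_1$, which yields $a <^* c <^* b$. Finally $c \ge a$ pointwise and $a \notin \zero$, so $c \notin \zero$, i.e.\ $c \in P$.

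The crux is $(**)$, i.e.\ that $P$ has no countable gaps; this is the step I expect to require real work. Let $\seq{a_n : n \in \omega}$ be strictly increasing and $\seq{b_m : m \in \omega}$ strictly decreasing with $a_n <^* b_m$ for all $n,m$. Using $a_n \le^* a_{n+1}$ and $a_n \le^* b_m$, I would fix thresholds $s_j$ (past which $a_0(k) \le \dots \le a_j(k)$) and $u_{j,m}$ (past which $a_j(k) \le b_m(k)$), and then define $g \colon \omega \to \omega$ by letting $g(k)$ be the largest $j \le k$ with $s_j \le k$ and $u_{j,m} \le k$ for all $m \le j$. Since for any target $J$ only finitely many thresholds are involved, $g(k) \to \infty$. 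Setting $c(k) = a_{g(k)}(k)$, a diagonal argument shows that for each fixed $n$ we have $a_n(k) \le c(k)$ once $g(k) \ge n$ (using $k \ge s_{g(k)}$), and for each fixed $m$ we have $c(k) \le b_m(k)$ once $g(k) \ge m$ (using $u_{g(k),m} \le k$). Thus $a_n \le^* c \le^* b_m$ for all $n,m$. Strictness then comes for free from strict monotonicity: $a_n <^* a_{n+1} \le^* c$ forces $c \not\le^* a_n$, and $c \le^* b_{m+1} <^* b_m$ forces $b_m \not\le^* c$, so $a_n <^* c <^* b_m$. Since $c \ge^* a_0$ and $a_0 \notin \zero$, we have $c \in P$, establishing $(**)$.

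With $(*)$ and $(**)$ in hand for $P$, the inheritance lemma gives these same properties for every maximal chain $C$, and Lemma~\ref{lem:embedsr} embeds $\R$ into $C$, so $\abs{C} \ge \frakc$; combined with $\abs{C} \le \frakc$ this pins every maximal chain at cardinality $\frakc$, completing the proof. The only genuinely delicate point is the interpolation construction for $(**)$; the rest is bookkeeping and an appeal to the already-established embedding lemmas.
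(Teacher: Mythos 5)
Your argument is correct and essentially identical to the paper's: the paper's one-line proof, that every maximal chain satisfies the assumption of Lemma~\ref{lem:embedsr}, is exactly your combination of verifying $(*)$ and $(**)$ for $(\omega^\omega \setminus \zero, \le^*)$, passing to maximal chains via the inheritance lemma of Section~\ref{sec:lemmas}, and embedding $\R$ to get the lower bound $\frakc$ --- you simply supply the interpolation details (and the trivial upper bound) that the paper leaves implicit. One cosmetic slip: in your verification of $(*)$, the claim that $c \le b$ holds pointwise can fail at the finitely many $n$ with $a(n) > b(n)$, but $c \le^* b$ still holds, which is all you use.
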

	
	\begin{proof}
		Every maximal chain of $\omega^\omega \setminus \zero$ satisfies the assumption in Lemma \ref{lem:embedsr}.
	\end{proof}
	
	The following theorem was obtained through private communication with Jorge Antonio Cruz Chapital.
	
	\begin{thm}\label{thm:mmacomegaomega}
		$\mmac(\omega^\omega\setminus \zero) = \frakc$.
	\end{thm}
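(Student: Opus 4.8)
The plan is to prove the two inequalities separately. The bound $\mmac(\omega^\omega\setminus\zero)\le\frakc$ is immediate: the whole poset has cardinality $\frakc$, so by Zorn's lemma a maximal antichain exists and has size at most $\frakc$. The content is the reverse inequality, that every maximal antichain has size at least $\frakc$. I would prove this in contrapositive form: given an antichain $A$ with $\abs{A}<\frakc$, I would produce some $x\in\omega^\omega\setminus\zero$ incomparable with every member of $A$, so that $A\cup\{x\}$ is a strictly larger antichain and $A$ is not maximal. Unwinding $\le^*$, the element $x$ must satisfy, for every $a\in A$, that both $\{n:x(n)>a(n)\}$ and $\{n:x(n)<a(n)\}$ are infinite; equivalently, $x$ must avoid every lower cone $D_a=\{y:y\le^* a\}$ and every upper cone $U_a=\{y:a\le^* y\}$.

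A useful starting observation is that an antichain is never dominating: if $a_0\in A$, then $a_0+1$ is dominated by no member of $A$ (any $a\ge^* a_0+1$ would satisfy $a>a_0$, contradicting incomparability), so there is some $c\in\omega^\omega$ with $c\not\le^* a$ for all $a\in A$. Thus the upper-cone requirement alone is satisfiable, and dually the zero function shows the lower-cone requirement alone is satisfiable. The difficulty is meeting both simultaneously for all $a\in A$ at once; the natural candidate is an oscillating $x$ that is $0$ on a suitable infinite set $Z$ (so that each $a\in A$, being positive infinitely often, exceeds $x$ infinitely often there) and that follows a non-dominated $c$ on $W=\omega\setminus Z$ (so that $x$ exceeds $a$ infinitely often).

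To reach the exact value $\frakc$ rather than merely a lower bound by some cardinal characteristic, I would fix a non-dominated $c$ and test the maximality of $A$ against a family $\{x_r:r\in2^\omega\}$ built from an almost disjoint family $\{S_r:r\in2^\omega\}$, say $x_r=c\cdot\chi_{S_r}$. The upper-cone direction is then essentially free: $a\le^* x_r$ forces $\{n:a(n)>0\}\subseteq^* S_r$, and since each such set is infinite while the $S_r$ are almost disjoint, this holds for at most one $r$ per $a$; hence the indices $r$ for which some $a\in A$ lies below $x_r$ inject into $A$. If continuum-many $x_r$ avoided all lower cones as well, they would be incomparable with every $a$ and we would be done at once; otherwise the injection on the remaining indices would already give $\abs{A}\ge\frakc$.

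The main obstacle — and the genuinely delicate point — is the lower-cone (domination) direction: $x_r\le^* a$ holds exactly when $S_r\subseteq^*\{n:a(n)\ge c(n)\}$, and although each complement $\{n:a(n)<c(n)\}$ is infinite (because $c$ is dominated by no $a$), a single large $a$ can still sit above continuum-many test functions. No fixed almost disjoint family can make these lower-cone comparabilities few uniformly in $a$: if every $a$ were above only countably many $x_r$, a dominating family of size $\frakd$ would cover all continuum-many $x_r$, forcing $\frakc\le\frakd$, which fails in general. Worse, when the sets $\{n:a(n)<c(n)\}$ are themselves almost disjoint, no single countable support can meet all of them. I therefore expect the real work to be a careful, $A$-dependent recursive construction of the supports that keeps continuum-many candidates out of every lower cone, genuinely using $\abs{A}<\frakc$ rather than any single cardinal invariant; this domination step is where I expect the proof to concentrate its difficulty, whereas the upper-cone direction is handled cleanly by almost disjointness.
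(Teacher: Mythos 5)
Your reduction (an antichain of size ${<}\frakc$ cannot be maximal) and your upper-cone analysis are sound, but the proposal stops exactly where the theorem lives: you leave the lower-cone/domination step as an unspecified ``$A$-dependent recursive construction,'' and no such construction is supplied or even sketched. This is a genuine gap, not a deferred detail. As you yourself observe, a single $a\in A$ can lie $\ge^*$-above continuum many of your candidates $x_r=c\cdot\chi_{S_r}$, so on the lower-cone side the witness map need not be injective, and nothing in your setup recovers an injection or produces a candidate escaping all lower cones. Note also that a direct recursion is not cheap: for fixed $a\in\omega^\omega\setminus\zero$, the set of $x$ incomparable with $a$ is comeager, so a straightforward diagonalization only handles antichains of size ${<}\cov(\meager)$, which is consistently smaller than $\frakc$; some further idea is genuinely required, and your sketch does not contain it.

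The paper's missing idea is to anchor the candidates to the antichain itself rather than to an external scale $c$, and to perturb \emph{two-sidedly}. Fix $\psi\in\mathcal{A}$ with support $X=\{n:\psi(n)>0\}$, take pairs $(A_\alpha,B_\alpha)$, $\alpha<\frakc$, of disjoint infinite subsets of $X$ with the unions $A_\alpha\cup B_\alpha$ pairwise almost disjoint, and set $g_\alpha=\psi+1$ on $A_\alpha$, $g_\alpha=\psi-1$ on $B_\alpha$ (legitimate since $\psi\ge 1$ on $X$), and $g_\alpha=\psi$ elsewhere. One then does not try to extend $\mathcal{A}$ at all: maximality gives for each $\alpha$ some $f_\alpha\in\mathcal{A}$ comparable with $g_\alpha$, say (for $\frakc$ many $\alpha$) $g_\alpha\le^* f_\alpha$. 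Here $f_\alpha\ne\psi$ (else $\psi\ge^* g_\alpha$, contradicting $g_\alpha>\psi$ on the infinite set $A_\alpha$), so $f_\alpha$ and $\psi$ are incomparable and $\{n:f_\alpha(n)<\psi(n)\}$ is infinite; moreover $g_\alpha\le^* f_\alpha$ traps this set inside $B_\alpha$ mod finite. Almost disjointness of the $B_\alpha$ then makes $\alpha\mapsto f_\alpha$ injective, giving $\abs{\mathcal{A}}=\frakc$ directly. The two-sided perturbation is precisely what your one-sided candidates $c\cdot\chi_{S_r}$ lack: comparability with a member of $\mathcal{A}$ in \emph{either} direction leaves an infinite trace inside the almost disjoint set indexed by $\alpha$, which is what converts the count into an injection and dissolves the domination obstacle on which your approach founders.
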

	\begin{proof}
		Let $\mathcal{A}$ be a maximal antichain of $\omega^\omega\setminus \zero$.
		Fix $\psi \in \mathcal{A}$.
		Let $X = \{ n \in \omega : \psi(n) > 0 \}$.
		Take a family $\seq{ (A_\alpha, B_\alpha) : \alpha < \frakc }$ of pairs of elements in $[X]^\omega$ such that $A_\alpha \cap B_\alpha = \emptyset$ for every $\alpha$ and $A_\alpha \cup B_\alpha$ and $A_\beta \cup B_\beta$ are almost disjoint for every distinct $\alpha$ and $\beta$.
		For $\alpha < \frakc$, we define $g_\alpha$ by
		\[
		g_\alpha(n) = \begin{cases}
			\psi(n)+1 & \text{ (if $n\in A_\alpha$)} \\
			\psi(n)-1 & \text{ (if $n\in B_\alpha$)} \\
			\psi(n) & \text{ (otherwise)}.
		\end{cases}
		\]
		Define two sets $Y_0, Y_1 \subset \frakc$ by
		\begin{align*}
			Y_0 &= \{ \alpha < \frakc : (\exists f \in \mathcal{A}) (g_\alpha \le^* f) \} \\
			Y_1 &= \{ \alpha < \frakc : (\exists f \in \mathcal{A}) (f \le^* g_\alpha) \}
		\end{align*}
		Since $Y_0 \cup Y_1 = \frakc$, we have either $\abs{Y_0} = \frakc$ or $\abs{Y_1} = \frakc$.
		
		Consider the case $\abs{Y_0} = \frakc$.
		For each $\alpha \in Y_0$, take $f_\alpha \in \mathcal{A}$ such that $g_\alpha \le^* f_\alpha$.
		Then for each $\alpha \in Y_0$, we have $\{ n : f_\alpha(n) < \psi(n) \} \subset^* B_\alpha$.
		Therefore, for distinct $\alpha$ and $\beta$, we have $\{ n : f_\alpha(n) < \psi(n) \}$ and $\{ n : f_\beta(n) < \psi(n) \}$ are almost disjoint.
		Thus, we have proved $f_\alpha \ne f_\beta$ whenever $\alpha$ and $\beta$ are distinct.
		So it holds that $\abs{\mathcal{A}} = \frakc$.
		
		The proof is similar for the case $\abs{Y_1} = \frakc$.
	\end{proof}

	\section{The cardinal invariants of Boolean algebras}\label{sec:boolean}
	
	In this section, we deal with (in)comparability numbers of Boolean algebras.
	We write the Boolean operations as $+, \cdot$ and $(-)^\comp$: join, meet and complementation.
	Moreover, $0$ and $1$ mean the minimum and maximum elements of the Boolean algebra.
	
	\begin{defi}
		Let $B$ be a Boolean algebra. Then we define $B^-$ by
		\[
		B^- = B \setminus \{ 0, 1 \}.
		\]
	\end{defi}
	
	\begin{lem}
		Let $B$ be a Boolean algebra that is not equal to $\{0, 1\}$.
		Then $\icp(B^-) = 2$.
	\end{lem}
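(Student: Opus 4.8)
The plan is to prove the two inequalities $\icp(B^-) \ge 2$ and $\icp(B^-) \le 2$ separately: the substance is to exhibit a two-element incomparable family and to rule out singletons.

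For the lower bound I would first observe that since $B \ne \{0,1\}$, the set $B^-$ is nonempty, so any incomparable family is nonempty. A singleton $\{q\}$ can never be an incomparable family: applying the defining condition to the element $p = q$ itself, we would need some member of the family incomparable with $q$, but the only candidate is $q$, and $q \le q$. Hence every incomparable family has at least two members, giving $\icp(B^-) \ge 2$.

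For the upper bound the idea is to take an arbitrary $a \in B^-$ (so $0 < a < 1$) and use the pair $\{a, a^\comp\}$. Both lie in $B^-$, since $0 < a < 1$ forces $0 < a^\comp < 1$, and they are distinct because $a = a^\comp$ would give $a = a \cdot a^\comp = 0$. I would then check that for every $p \in B^-$ at least one of $a, a^\comp$ is incomparable with $p$. The crux is to show that no $p \in B^-$ can be comparable with both $a$ and $a^\comp$ simultaneously. This splits into four cases according to the directions of the two comparabilities, and each collapses via the identities $a \cdot a^\comp = 0$ and $a + a^\comp = 1$: for instance $p \le a$ and $p \le a^\comp$ force $p \le a \cdot a^\comp = 0$, while $a^\comp \le p \le a$ forces $a^\comp \le a$, hence $a^\comp = a \cdot a^\comp = 0$; the remaining two cases ($a \le p$ and $a^\comp \le p$, and $a \le p \le a^\comp$) are symmetric. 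Since $p \in B^-$ rules out $p = 0$ and $p = 1$, every case is a contradiction, so $p$ is incomparable with at least one of the two elements.

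The only real subtlety is this four-case check, and it is routine once one keeps track of which inequality to multiply against $a$ or against $a^\comp$; there is no genuine obstacle here. Combining the two bounds yields $\icp(B^-) = 2$, and as a byproduct the displayed family witnesses that $\icp(B^-)$ is indeed defined, equivalently that $\cp(B^-) > 1$.
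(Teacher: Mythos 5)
Your proposal is correct and follows essentially the same route as the paper: both use the witness family $\{a, a^\comp\}$ for an arbitrary $a \in B^-$ and show that comparability of $p$ with one of the two elements forces incomparability with the other (your four-case check is just an unfolded version of the paper's contrapositive argument). You additionally spell out the trivial lower bound $\icp(B^-) \ge 2$ (a singleton fails because $q \le q$), which the paper leaves implicit.
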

	\begin{proof}
		Take an element $b \in B \setminus \{0, 1\}$.
		Then $F = \{ b, b^\comp \}$ satisfies
		\[
		(\forall x \in B^-)(\exists y \in F)(x \not \le y \AND y \not \le x).
		\]
		In order to show this, let $x \in \Pow(\omega)^-$.
		Assume that $x \le b$ or $b \le x$.
		In either case, we can easily show that both $x \not \le b^\comp$ and $b^\comp \not \le  x$.
	\end{proof}
	
	\begin{defi}
		Let $B$ be a Boolean algebra and $D$ be a subset of $B \setminus \{0\}$.
		We say $D$ is \textit{weakly dense set} of $B$ if for all $b \in B \setminus \{0\}$ there is $d \in D$ such that $d \le b$ or $d \le b^\comp$.
		Put 
		\[
		\WD(B) = \min \{\abs{D} : D \text{ is weakly dense set of } B \}
		\]
	\end{defi}
	
	\begin{lem}\label{lem:wdbisinfinite}
		If $B$ is an atomless Boolean algebra, then $\WD(B)$ is infinite.
	\end{lem}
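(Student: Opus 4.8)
The plan is to argue by contradiction: assume $D = \{d_0, \ldots, d_{n-1}\}$ is a finite weakly dense subset of $B$, and produce a single nonzero $b \in B$ witnessing that $D$ fails the weak density requirement. The first step is to rewrite the failure condition combinatorially. For a fixed $b \in B \setminus \{0\}$, note that $d \not\le b$ is equivalent to $d \cdot b^\comp \ne 0$, while $d \not\le b^\comp$ is equivalent to $d \cdot b \ne 0$. Thus $b$ is a counterexample to the weak density of $D$ precisely when $b$ \emph{splits} every $d \in D$, meaning $d \cdot b \ne 0$ and $d \cdot b^\comp \ne 0$ for all $d \in D$. So it suffices to find one element of $B$ that simultaneously splits all $n$ given nonzero elements $d_0, \ldots, d_{n-1}$.

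To build such a $b$, I would pass to the finite Boolean subalgebra $A$ generated by $d_0, \ldots, d_{n-1}$. Let $e_0, \ldots, e_{m-1}$ be the atoms of $A$; each $e_j$ is a nonzero element of $B$, and each $d_i$ is the join of the atoms of $A$ below it, say $d_i = \sum_{j \in S_i} e_j$ with $S_i \ne \emptyset$ since $d_i \ne 0$. Here is where atomlessness enters: because $B$ has no atoms, each $e_j$ (being nonzero in $B$) strictly dominates some nonzero element, so I can choose $b_j$ with $0 < b_j < e_j$. Setting $b = \sum_{j < m} b_j$, the pairwise disjointness of the atoms gives $d_i \cdot b = \sum_{j \in S_i} b_j \ne 0$ and $d_i \cdot b^\comp = \sum_{j \in S_i}(e_j \cdot b^\comp) \ne 0$ for every $i$, since $S_i$ is nonempty while each $b_j$ is nonzero and each $e_j \cdot b^\comp \ge e_j - b_j \ne 0$. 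Hence $b$ splits all the $d_i$, and in particular $b \ne 0$.

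With this $b$ in hand, no $d \in D$ satisfies $d \le b$ or $d \le b^\comp$, contradicting weak density; therefore no finite set is weakly dense and $\WD(B)$ is infinite. The only real content is the simultaneous splitting claim, and the main obstacle is handling all $n$ elements at once rather than one at a time. This is exactly what the passage to the finite subalgebra and its atoms resolves: it reduces the problem to splitting finitely many pairwise disjoint nonzero elements, each of which is possible by atomlessness, and the joint split then descends to each $d_i$ because $S_i$ is nonempty.
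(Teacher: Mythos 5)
Your proof is correct and follows essentially the same strategy as the paper: reduce to a finite family of pairwise disjoint nonzero elements, use atomlessness to choose $0 < b_j < e_j$ inside each, and show the join $b = \sum_j b_j$ splits every $d_i$, contradicting weak density. The only cosmetic difference is that you take the atoms of the finite subalgebra generated by $D$ where the paper uses the minimal nonzero finite meets of $D$ (and a ``we may assume $D$ is pairwise disjoint'' reduction); your variant sidesteps having to check that the replacement family is still weakly dense, since you split the original $d_i$ directly.
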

	\begin{proof}
		Suppose that $D$ is a finite weakly dense set.
		Let $D'$ be the set of finite meets of elements of $D$ that is not equal to $0$.
		Let $D''$ be the set of minimal elements of $D'$.
		Then $D''$ is a finite weakly dense set such that for every distinct $d, e \in D''$, we have $d \cdot e = 0$.
		We may assume that given $D$ has this property.
		
		Enumerate $D$ as $D = \{ d_0, \dots, d_{n-1} \}$.
		For each $i < n$, take an element $e_i$ such that $0 < e_i < d_i$. We can take these elements since $B$ is atomless.
		Put $b = e_0 + \dots + e_{n-1}$.
		Then we have $d_i \not \le b$ and $d_i \not \le b^\comp$ for every $i < n$. This is a contradiction.		
	\end{proof}
	
	\begin{lem}\label{lem:cpequalswd}
		Let $B$ be a Boolean algebra.
		Then we have $\cp(B^-) \le 2 \WD(B)$ and $\WD(B) \le 2 \cp(B^-)$.
		In particular, if either $\cp(B^-)$ or $\WD(B)$ is infinite, then we have $\cp(B^-) = \WD(B)$.
	\end{lem}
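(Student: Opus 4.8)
The plan is to prove the two inequalities separately and then read off the ``in particular'' clause from the identity $2\kappa = \kappa$ for infinite cardinals $\kappa$. The single idea driving both directions is that complementation swaps the pair of relations $d \le b$, $d \le b^\comp$ appearing in weak density with the pair $q \le p$, $p \le q$ appearing in comparability. Before starting I would record the bookkeeping I need about $0$ and $1$: complementation is an involution exchanging $0$ and $1$, so an element lies in $B^-$ exactly when its complement does; moreover, if $p \in B^-$ and $d \le p$ then $d \ne 1$ (else $p = 1$), and if $d \le p^\comp$ with $p \ne 0$ then again $d \ne 1$ (else $p^\comp = 1$, i.e.\ $p = 0$). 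I may also assume $B \ne \{0,1\}$, since otherwise $B^- = \emptyset$ and the statement is degenerate; in particular any witnessing family will be nonempty.

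For $\cp(B^-) \le 2\WD(B)$, I would fix a weakly dense set $D$ with $\abs{D} = \WD(B)$ and put $F = \{ d, d^\comp : d \in D \} \cap B^-$, so that $\abs{F} \le 2\WD(B)$. To see $F$ is a comparable family, take $p \in B^-$ and apply weak density to $b = p$ to obtain $d \in D$ with $d \le p$ or $d \le p^\comp$. In the first case $d$ is comparable to $p$ and the bookkeeping gives $d \in B^-$, hence $d \in F$ works; in the second case, complementing $d \le p^\comp$ yields $p \le d^\comp$, so $d^\comp$ is comparable to $p$ and lies in $B^-$, hence $d^\comp \in F$ works.

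For $\WD(B) \le 2\cp(B^-)$, I would fix a comparable family $F \subset B^-$ with $\abs{F} = \cp(B^-)$ and put $D = \{ q, q^\comp : q \in F \}$; since each $q \in B^-$ has $q, q^\comp \in B \setminus \{0\}$, we get $D \subset B \setminus \{0\}$ and $\abs{D} \le 2\cp(B^-)$. To check weak density, let $b \in B \setminus \{0\}$. If $b = 1$, any element of the nonempty set $D$ is below $b$. If $b \in B^-$, comparability gives $q \in F$ with $q \le b$ or $b \le q$: in the first case $q \in D$ witnesses $q \le b$, and in the second case complementing $b \le q$ gives $q^\comp \le b^\comp$, so $q^\comp \in D$ witnesses $q^\comp \le b^\comp$.

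Finally, for the ``in particular'' clause, suppose $\cp(B^-)$ is infinite. From $\cp(B^-) \le 2\WD(B)$ it follows that $\WD(B)$ is infinite as well, since a finite $\WD(B)$ would force $\cp(B^-)$ finite; hence $2\WD(B) = \WD(B)$ and $2\cp(B^-) = \cp(B^-)$, and the two inequalities collapse to $\cp(B^-) = \WD(B)$. The symmetric argument handles the case where $\WD(B)$ is infinite. I expect no genuine obstacle here; the only place demanding care is the edge-case bookkeeping around $0$ and $1$ flagged in the first paragraph, which is exactly what guarantees the constructed witnesses stay inside $B^-$ and inside $B \setminus \{0\}$ respectively.
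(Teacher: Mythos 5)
Your proof is correct and follows essentially the same route as the paper: both directions use the same construction, namely closing the given weakly dense set (resp.\ comparable family) under complementation to obtain a comparable family (resp.\ weakly dense set) of at most twice the size, and then invoking $2\kappa = \kappa$ for infinite $\kappa$. The paper states these constructions without verification, so your edge-case bookkeeping (witnesses landing in $B^-$ and in $B \setminus \{0\}$, the case $b = 1$) is a welcome elaboration rather than a departure.
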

	\begin{proof}
		First we show $\WD(B) \le 2 \cp(B^-)$.
		Let $C$ be a comparable family of $B^-$ of size $\cp(B^-)$.
		Then $C' = C \cup \{ c^\comp : c \in C \}$ is a weakly dense set of $B$.
		Now we have $\abs{C'}\le 2 \abs{C} = 2 \cp(B^-)$.
		So $\WD(B) \le 2 \cp(B^-)$.
		
		Next we show $\cp(B^-)\le 2 \WD(B)$.
		Let $D$ be a weak dense family of $B$ of size $\WD(B^-)$.
		Then $D' = D \cup \{ d^\comp : d \in D \}$ is a comparable family of $B^-$.
		Now we have $\abs{D'} \le 2 \abs{D} = 2 \WD(B)$.
		So $\cp(B^-) \le 2 \WD(B)$.
	\end{proof}
	
	\section{The cardinal invariants of $\Pow(\omega)/\mathsf{fin}$}
	
	\begin{cor}
		$\cp((\Pow(\omega))/\mathrm{fin})^-) = \frakr$. \qed
	\end{cor}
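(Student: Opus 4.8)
The plan is to reduce the statement to the Boolean-algebra machinery just developed, together with the mere definition of the reaping number. First I would observe that $\Pow(\omega)/\mathrm{fin}$ is atomless, so Lemma \ref{lem:wdbisinfinite} shows that $\WD(\Pow(\omega)/\mathrm{fin})$ is infinite. Hence the infinite case of Lemma \ref{lem:cpequalswd} applies and gives
\[
\cp((\Pow(\omega)/\mathrm{fin})^-) = \WD(\Pow(\omega)/\mathrm{fin}).
\]
Thus the whole corollary comes down to the single identity $\WD(\Pow(\omega)/\mathrm{fin}) = \frakr$.

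To prove that identity I would simply unwind both definitions, using the standard dictionary for $\Pow(\omega)/\mathrm{fin}$: the order $d \le b$ is almost-inclusion $d \subset^* b$, the nonzero elements are precisely the classes of infinite sets, and complementation is $b^\comp = \omega \setminus b$. After choosing representatives, a weakly dense set of $\Pow(\omega)/\mathrm{fin}$ is exactly a family $D$ of infinite subsets of $\omega$ such that for every infinite $b \subset \omega$ there is $d \in D$ with $d \subset^* b$ or $d \subset^* \omega \setminus b$; equivalently, no set $b$ splits every member of $D$. This is precisely the defining property of a reaping (unsplit) family, so a family of infinite sets is weakly dense if and only if it is reaping. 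Since passing between representatives and classes does not change cardinality, minimizing over the two kinds of families yields the same cardinal, namely $\WD(\Pow(\omega)/\mathrm{fin}) = \frakr$.

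I do not expect a genuine obstacle here—the essential work was already carried out in Lemma \ref{lem:cpequalswd}—but two cosmetic points deserve a line of care. In the definition of weak density $b$ ranges over all nonzero elements, including cofinite $b$; for such $b$ the alternative $d \subset^* \omega\setminus b$ is impossible (the complement is finite) while $d \subset^* b$ holds for every infinite $d$, so cofinite $b$ impose no real constraint. Dually, in the definition of $\frakr$ the splitter ranges over all subsets of $\omega$, but finite (and cofinite) splitters are trivially witnessed by any member of the family, so only the infinite coinfinite splitters matter—exactly the elements corresponding to $b \notin \{0,1\}$. Matching these trivial cases confirms that the two combinatorial conditions are literally the same, completing the identification.
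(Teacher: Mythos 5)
Your proof is correct and takes essentially the same route as the paper, whose entire proof is ``This follows from Lemma \ref{lem:cpequalswd}'': the identification $\WD(\Pow(\omega)/\fin) = \frakr$ that you spell out (weakly dense families of infinite sets modulo finite are exactly reaping families, with the finite/cofinite cases being vacuous) is precisely what the paper leaves implicit. Your appeal to atomlessness and Lemma \ref{lem:wdbisinfinite} to justify invoking the infinite case of Lemma \ref{lem:cpequalswd} correctly fills in the one detail the paper glosses over.
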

	\begin{proof}
		This follows from Lemma \ref{lem:cpequalswd}.
	\end{proof}
	
	The following fact was discovered by G. Campero-Arena, J. Cancino, M. Hrušák and F. E. Miranda-Perea.
	
	\begin{fact}[{{\cite[Corollary 2.4]{ahp2016}}}]
		$\mmac((\Pow(\omega)/\fin)^-) = \frakc$.
	\end{fact}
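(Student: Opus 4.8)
The bound $\mmac((\Pow(\omega)/\fin)^-) \le \frakc$ is immediate, since a maximal antichain exists by Zorn's lemma and the whole poset has cardinality $\frakc$. The content is therefore the reverse inequality: every maximal antichain has size at least $\frakc$. My plan is to transport the argument of Theorem \ref{thm:mmacomegaomega} from the support/values setting of $\omega^\omega \setminus \zero$ to the almost-inclusion setting of $(\Pow(\omega)/\fin)^-$.

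So let $\mathcal{A}$ be a maximal antichain and fix $A \in \mathcal{A}$; as an element of $(\Pow(\omega)/\fin)^-$, both $A$ and $\omega \setminus A$ are infinite. Using this, I would fix an almost disjoint family $\seq{P_\alpha : \alpha < \frakc}$ of infinite, co-infinite subsets of $\omega \setminus A$ together with an almost disjoint family $\seq{Q_\alpha : \alpha < \frakc}$ of infinite, co-infinite subsets of $A$. For each $\alpha$ I set $X_\alpha = (A \setminus Q_\alpha) \cup P_\alpha$, so that $X_\alpha$ arises from $A$ by adding the ``external'' set $P_\alpha$ and deleting the ``internal'' set $Q_\alpha$. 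Each $X_\alpha$ is again infinite and co-infinite, hence a genuine element of the poset, and one has $A \setminus X_\alpha = Q_\alpha$ and $X_\alpha \setminus A = P_\alpha$, both infinite; thus $X_\alpha$ differs from $A$ infinitely in each direction.

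By maximality, every $X_\alpha$ is comparable with some $f_\alpha \in \mathcal{A}$, so I would split the index set $\frakc$ as $Y_0 \cup Y_1$ according to whether some witness gives $X_\alpha \subseteq^* f_\alpha$ or $f_\alpha \subseteq^* X_\alpha$, and then pass to whichever class has size $\frakc$. The key computation is that in the first case $A \setminus f_\alpha \subseteq^* A \setminus X_\alpha = Q_\alpha$, while in the second case $f_\alpha \setminus A \subseteq^* X_\alpha \setminus A = P_\alpha$. Thus each chosen witness $f_\alpha$ carries a \emph{signature} (namely $A \setminus f_\alpha$, respectively $f_\alpha \setminus A$) that lands almost inside the pairwise almost disjoint sets $Q_\alpha$ (resp.\ $P_\alpha$). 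Since distinct members of the antichain $\mathcal{A}$ are incomparable, and since $X_\alpha$ was arranged to differ from $A$ infinitely, I can rule out $f_\alpha = A$ and conclude that each signature is infinite; then pairwise almost disjointness forces the signatures, and hence the $f_\alpha$, to be pairwise distinct, producing $\frakc$ many members of $\mathcal{A}$.

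The step I expect to be most delicate is verifying that each signature set is genuinely infinite, for this is exactly where both hypotheses must be used together: the antichain property guarantees that $f_\alpha \ne A$ forces $A \not\subseteq^* f_\alpha$ and $f_\alpha \not\subseteq^* A$, while the infinitude of $P_\alpha = X_\alpha \setminus A$ and $Q_\alpha = A \setminus X_\alpha$ rules out $f_\alpha = A$ in the respective cases. Once infinitude is secured, the passage from almost disjointness of the signatures to distinctness of the $f_\alpha$ is routine, and the lower bound $\abs{\mathcal{A}} \ge \frakc$ follows.
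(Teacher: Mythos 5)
Your proposal is correct, and every step checks out: $X_\alpha = (A \setminus Q_\alpha) \cup P_\alpha$ is infinite and co-infinite with $A \setminus X_\alpha = Q_\alpha$ and $X_\alpha \setminus A = P_\alpha$; in the $Y_0$ case $f_\alpha = A$ (in the quotient) would make $X_\alpha \setminus A = P_\alpha$ finite, so $f_\alpha$ and $A$ are distinct members of the antichain and hence incomparable, giving the signature $A \setminus f_\alpha$ infinite while $A \setminus f_\alpha \subseteq^* Q_\alpha$; almost disjointness of the $Q_\alpha$ then makes $\alpha \mapsto f_\alpha$ injective on $Y_0$, and the $Y_1$ case is symmetric. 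But be aware that the paper offers no proof of this Fact: it is quoted from \cite[Corollary 2.4]{ahp2016}, and the flavor of that cited argument is visible in the paper's own Proposition \ref{prop:mmacnul}, which is explicitly modeled on \cite[Proposition 2.3]{ahp2016}. That argument is a ``one new element'' defeat of maximality: close a putative maximal antichain $\mathcal{A}$ of size less than $\frakc$ under finite Boolean operations, use a cardinality/density computation to find $C_0, C_1$ not almost containing any nonzero member of the closure, and check that the single perturbation $D = (A \setminus C_1) \cup C_0$ is incomparable with everything in $\mathcal{A}$, contradicting maximality. Your route is genuinely different: it transports the proof of Theorem \ref{thm:mmacomegaomega}, manufacturing $\frakc$ many perturbations of a fixed $A \in \mathcal{A}$ at once via almost disjoint families and pigeonholing the comparability witnesses, thereby injecting a set of size $\frakc$ directly into $\mathcal{A}$. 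The two approaches buy different things. Yours avoids the closure-under-operations bookkeeping and the density computation entirely (almost disjoint families of size $\frakc$ are free on a countable set), and it crucially exploits that the perturbation base $A$ lies in $\mathcal{A}$, so that antichain incomparability forces the signatures $A \setminus f_\alpha$, respectively $f_\alpha \setminus A$, to be infinite --- exactly as the fixed $\psi \in \mathcal{A}$ functions in Theorem \ref{thm:mmacomegaomega}. The closure-based argument of \cite{ahp2016}, by contrast, perturbs relative to auxiliary sets rather than a member of the antichain, which is what lets it adapt to settings like $\nul \setminus \{\emptyset\}$ where your coordinatewise almost-disjoint perturbation has no direct analogue. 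Both yield that \emph{every} maximal antichain has cardinality exactly $\frakc$, the upper bound being, as you say, trivial from Zorn's lemma and $\abs{\Pow(\omega)/\fin} = \frakc$.
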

	
	\section{The cardinal invariants of the Cohen algebra and the random algebra}
	
	\begin{cor}
		$\cp((\Borel(2^\omega)/\nul)^-) = \cof(\nul)$.
	\end{cor}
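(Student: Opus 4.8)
The plan is to reduce the statement to a computation of $\WD$ and then evaluate the latter. Since Lebesgue measure is atomless, the algebra $B := \Borel(2^\omega)/\nul$ is atomless, so $\WD(B)$ is infinite by Lemma \ref{lem:wdbisinfinite}; hence Lemma \ref{lem:cpequalswd} gives $\cp(B^-) = \WD(B)$. It therefore suffices to prove $\WD(B) = \cof(\nul)$. For the upper bound $\WD(B) \le \cof(\nul)$, I would observe that every family $G \subset B^+ := B \setminus \{0\}$ that is dense from below (meaning $\forall b\,\exists g \in G\ (g \le b)$) is automatically weakly dense, since $g \le b$ already witnesses the defining disjunction. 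Dualising via complementation, the least size of such a dense family equals $\frakd(B^-)$, and by the known value recorded in Table \ref{table:known} we have $\frakd(B^-) = \cof(\nul)$. Thus a weakly dense family of size $\cof(\nul)$ exists, giving $\WD(B) \le \cof(\nul)$.

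The hard part is the lower bound $\cof(\nul) \le \WD(B)$, and this is where I expect the main obstacle. One should resist the temptation to build a base of $\nul$ by covering a prescribed null set $N$ directly from a weakly dense family $D$: weak density only ever produces positive sets lying inside $b$ or inside $b^\comp$, never sets containing a given null set, and in fact a weakly dense family can avoid a fixed positive set entirely (a family that is already weakly dense inside $e^\comp$ for some positive $e$ remains weakly dense in $B$), so such families need not even be dense. Consequently the lower bound cannot come from the general inequalities of Figure \ref{fig:mmcandmmac}, which only bound $\cp$ from above. Instead I would route it through Bartoszyński's characterisation of $\cof(\nul)$ as $\mathbf{d}(\Lc)$, the least size of a localising family of slaloms for the relation $\Lc$. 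Concretely, I would code each positive Borel set as a finitely branching tree of positive measure, read off from a weakly dense $D$ a family of slaloms by recording, at each scale, the finitely many branches along which containment in $b$ or $b^\comp$ is decided, and check that the weak‑density quantifier ``for every positive $b$ some $d \in D$ is decided by $b$'' transfers to the localisation quantifier ``every $f$ is captured by some slalom''. This would yield a localising family of size $\lvert D\rvert \cdot \aleph_0$, whence $\cof(\nul) = \mathbf{d}(\Lc) \le \WD(B)$.

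The delicate point, and the step I expect to consume most of the effort, is exactly this translation from the measure‑algebra decision relation to the combinatorial localisation relation: the tree coding must be chosen so that being contained in $b$ or in $b^\comp$ corresponds to a genuine bound on a slalom, and so that the $\sigma$‑additivity of the measure is actually used. This last ingredient is what distinguishes the random algebra from $\Pow(\omega)/\mathsf{fin}$, where the analogous invariant collapses only to $\frakr$ rather than to the cofinality of the ideal, and it is precisely what must rule out the ``spread‑out'' configurations that otherwise keep weakly dense families from being cofinal. As a fallback, one may instead invoke the known evaluation of the reaping number of the measure algebra, which is $\cof(\nul)$, and combine it with the reduction of the first paragraph.
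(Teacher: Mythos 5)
Your reduction is exactly the paper's: the paper proves this corollary in one line by combining Lemma \ref{lem:cpequalswd} with Theorem 1 of \cite{burke1989weakly}, which states precisely that $\WD(\Borel(2^\omega)/\nul) = \cof(\nul)$. Your ``fallback'' in the last sentence is in fact that same citation: since $d \le b$ or $d \le b^\comp$ says exactly that $b$ does not split $d$, a weakly dense set is an unsplit (unreaped) family, so $\WD(B)$ \emph{is} the Boolean reaping number of the measure algebra, and its known evaluation as $\cof(\nul)$ is Burke's theorem. (Your extra care in invoking Lemma \ref{lem:wdbisinfinite} to get infiniteness before applying Lemma \ref{lem:cpequalswd} is correct and is implicit in the paper.) So, read through the fallback, your proposal is correct and essentially identical to the paper's proof.

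What you should not count as a proof is your primary route. The upper bound $\WD(B) \le \cof(\nul)$ via a dense-from-below family is sound but redundant (and note it silently leans on the Table \ref{table:known} value $\frakd((\Borel(2^\omega)/\nul)^-) = \cof(\nul)$, a nontrivial known fact, not something you derive). The lower bound, however, is only a plan: the step you defer --- translating the weak-density decision ``$d \le b$ or $d \le b^\comp$'' into the localisation relation for slaloms via a tree coding --- is not a technical detail but the entire content of the hard direction of Burke's theorem; nothing in your sketch specifies the coding or verifies the quantifier transfer, so as written the direct argument has a genuine gap. Your diagnosis of why naive approaches fail (weakly dense families need not be dense, so Figure \ref{fig:mmcandmmac} cannot give the lower bound) is a correct and worthwhile observation; just be clear that your submission establishes the corollary only by citation, exactly as the paper does.
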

	\begin{proof}
		This follows from Lemma \ref{lem:cpequalswd} and Theorem 1 in \cite{burke1989weakly} that states that $\WD(\Borel(2^\omega)/\nul) = \cof(\nul)$.
	\end{proof}

	\begin{prop}
		$\mmc((\Borel(2^\omega)/\nul)^-) = \mmc((\Borel(2^\omega)/\meager)^-) = \frakc$.
	\end{prop}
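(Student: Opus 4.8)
The plan is to verify that each algebra $B$ in question, restricted to $B^- = B \setminus \{0,1\}$, satisfies both the density property $(*)$ of Lemma \ref{lem:embedsq} and the interpolation property $(**)$ of Lemma \ref{lem:embedsr}. Granting this, the lemma stating that $(*)$ and $(**)$ are inherited by maximal chains lets me apply Lemma \ref{lem:embedsr} to an arbitrary maximal chain $C$ of $B^-$, viewed as a poset in its own right: such $C$ embeds $\R$, so $\abs{C} \ge \frakc$. Hence every maximal chain has size at least $\frakc$, i.e.\ $\mmc(B^-) \ge \frakc$. Since $\abs{\Borel(2^\omega)/\nul} = \abs{\Borel(2^\omega)/\meager} = \frakc$, every chain has size at most $\frakc$, so $\mmc(B^-) = \frakc$ in both cases.

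The two structural facts I need about $B$ are that it is atomless and $\sigma$-complete, both of which hold for the random algebra $\Borel(2^\omega)/\nul$ and the Cohen algebra $\Borel(2^\omega)/\meager$, as each is a complete atomless Boolean algebra. Property $(*)$ follows from atomlessness: if $a < b$ in $B^-$, then $b \cdot a^\comp > 0$, so there is $d$ with $0 < d < b \cdot a^\comp$, and $c = a + d$ satisfies $a < c < b$; the chain $0 < a \le c \le b < 1$ keeps $c$ inside $B^-$.

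For $(**)$, given a strictly increasing $\seq{a_n : n \in \omega}$ and a strictly decreasing $\seq{b_m : m \in \omega}$ with $a_n < b_m$ for all $n, m$, I set $c = \sup_n a_n$, which exists by $\sigma$-completeness. Then $a_n < a_{n+1} \le c$ gives $a_n < c$ for all $n$, while for each $m$ the bound $a_k \le b_{m+1}$ (valid for every $k$) yields $c \le b_{m+1} < b_m$, hence $c < b_m$; finally $0 < a_0 \le c \le b_0 < 1$ places $c$ in $B^-$. This last verification is the only delicate point: the naive interpolant needs a \emph{strict} upper inequality, which could fail in the degenerate case $\sup_n a_n = \inf_m b_m$. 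Passing through $b_{m+1}$ rather than $b_m$, using the strict monotonicity of $\seq{b_m : m \in \omega}$, circumvents this, and is the crux of the argument.
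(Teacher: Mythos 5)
Your proposal is correct and follows exactly the paper's route: the paper's proof is a one-line appeal to the $\sigma$-completeness of the two algebras together with Lemmas \ref{lem:embedsq} and \ref{lem:embedsr} and the inheritance lemma of Section \ref{sec:lemmas}, which is precisely the argument you spell out. Your explicit verifications of $(*)$ via atomlessness and of $(**)$ via $c = \sup_n a_n$ (including the passage through $b_{m+1}$ to secure the strict inequality when $\sup_n a_n = \inf_m b_m$) are sound and simply make explicit what the paper leaves to the reader.
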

	\begin{proof}
		This follows from the fact that the above 2 Boolean algebras are $\sigma$-complete and lemmas in Section \ref{sec:lemmas}.
	\end{proof}

	\section{The cardinal invariants of the ideal $\nul$}\label{sec:nullideal}
	
	In this section, we determine the values $\cp(\nul \setminus \{\emptyset\})$ and $\icp(\nul \setminus \{\emptyset\})$.
	
	\begin{fact}[{{\cite[Lemma 1.3.23]{bartoszynski1995set}}}]\label{fact:indep}
		Suppose that $\seq{ a_n : n \in \omega }$ is a sequence of reals in $(0, 1)$.
		Then there is a sequence $\seq{A_n : n \in \omega}$ of open sets of $2^\omega$ such that it is independent in the sense of probability theory and $\mu(A_n) = a_n$.
	\end{fact}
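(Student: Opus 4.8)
The plan is to reduce the construction to the elementary fact that every real in $[0,1]$ is the measure of some open subset of $2^\omega$, and then to obtain independence for free by making the sets depend on pairwise disjoint blocks of coordinates. First I would fix a partition of $\omega$ into infinitely many pairwise disjoint infinite sets $\seq{C_n : n \in \omega}$; for instance, transport a bijection $\omega \cong \omega \times \omega$ and let $C_n$ be the $n$-th fibre. For each $n$ the space $2^{C_n}$, equipped with its product measure $\mu_n$, is measure-isomorphic to $2^\omega$.

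The next step is to produce, inside each factor $2^{C_n}$, an open set $A_n' \subset 2^{C_n}$ with $\mu_n(A_n') = a_n$. This is the only place where an honest computation is needed, and it is routine: writing $a_n = \sum_{k \in \omega} \epsilon_k 2^{-(k+1)}$ in binary with $\epsilon_k \in \{0,1\}$, I would choose pairwise disjoint basic clopen cylinders $U_k \subset 2^{C_n}$ with $\mu_n(U_k) = 2^{-(k+1)}$ --- concretely, $U_k$ is determined by fixing the first $k$ coordinates of $C_n$ to be $0$ and the $(k+1)$-st to be $1$ --- and then set $A_n' = \bigcup \{ U_k : \epsilon_k = 1 \}$. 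Then $A_n'$ is open and $\mu_n(A_n') = \sum_{\epsilon_k = 1} 2^{-(k+1)} = a_n$. The use of infinitely many coordinates (rather than a single clopen set) is exactly what lets us hit an arbitrary, possibly non-dyadic, real $a_n \in (0,1)$.

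Finally I would pull each $A_n'$ back to a cylinder $A_n = \{ x \in 2^\omega : x \restrict C_n \in A_n' \}$, which is open in $2^\omega$ with $\mu(A_n) = a_n$. Independence is then immediate from the product structure of $\mu$: the coordinate projections onto the disjoint blocks $C_n$ generate independent $\sigma$-algebras, and each $A_n$ lies in the $\sigma$-algebra generated by the coordinates in $C_n$; hence the events $\seq{ A_n : n \in \omega }$ are mutually independent. Concretely, for any finite $S \subset \omega$ one has $\mu\left( \bigcap_{n \in S} A_n \right) = \prod_{n \in S} \mu_n(A_n') = \prod_{n \in S} a_n$ by the definition of the product measure.

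The main (and really only) obstacle is conceptual rather than technical: one must use open sets built from infinitely many cylinders in order to realize non-dyadic measures, and one must cleanly separate the measure requirement from the independence requirement. The disjoint-block device disposes of independence automatically, after which prescribing the measures becomes an isolated one-dimensional problem solved by a binary expansion.
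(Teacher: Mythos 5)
Your proof is correct, and it is essentially the standard argument for this fact, which the paper itself does not prove but quotes from Bartoszy\'nski--Judah (Lemma 1.3.23): the disjoint coordinate blocks $C_n$ give independence automatically, and the binary-expansion union of cylinders $U_k$ correctly realizes an arbitrary measure $a_n \in (0,1)$ by an open set. No gaps: the $U_k$ are indeed pairwise disjoint with $\mu_n(U_k) = 2^{-(k+1)}$, each pulled-back $A_n$ is open, and independence follows from the product structure exactly as you state.
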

	
	\begin{lem}\label{lem:cofnull}
		If $\mathcal{F} \subset \nul$ is a family of size less than $\cof(\nul)$, then there is a $B \in \nul$ such that for all $A \in \mathcal{F}$ we have $\abs{B \setminus A} = \frakc$.
	\end{lem}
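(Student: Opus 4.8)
The plan is to pass to a product representation $2^\omega \cong X \times Y$ with $X = Y = 2^\omega$ (say by splitting the coordinates of $2^\omega$ into evens and odds; this identification is a measure-preserving homeomorphism, so null sets correspond to null sets), to take $B$ of the form $X \times N$ for a suitably chosen null set $N \subseteq Y$, and to locate the required $\frakc$ points of $B \setminus A$ inside a single horizontal level $X \times \{y\}$.

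First I would reduce to Borel sets: each $A \in \mathcal{F}$ lies inside a Borel null set $\hat A$, and since $B \setminus A \supseteq B \setminus \hat A$ it suffices to treat the family $\{\hat A : A \in \mathcal{F}\}$, which has the same cardinality. For a Borel null set $A$ and $y \in Y$ put $A^y = \{x \in X : (x,y) \in A\}$. By Fubini's theorem $\int_Y \mu(A^y)\,dy = \mu(A) = 0$, so the set $Z_A := \{y \in Y : \mu(A^y) > 0\}$ is null. This is where the measure-theoretic content is spent: for $y \notin Z_A$ the section $A^y$ is null, so its complement $X \setminus A^y$ is conull and hence of cardinality $\frakc$.

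Next comes the only genuinely cardinal-theoretic step. The family $\{Z_A : A \in \mathcal{F}\} \subseteq \nul$ has size at most $|\mathcal{F}| < \cof(\nul)$, hence cannot be cofinal in $(\nul, \subseteq)$; by the definition of $\cof(\nul)$ this yields a single nonempty null set $N$ with $N \not\subseteq Z_A$ for every $A \in \mathcal{F}$. Setting $B = X \times N$ gives $\mu(B) = \mu(N) = 0$ and $|B| = \frakc$. To finish I would fix $A \in \mathcal{F}$, choose some $y \in N \setminus Z_A$, and note that $\mu(A^y) = 0$ makes $X \setminus A^y$ conull; every $x \in X \setminus A^y$ then gives a point $(x,y) \in B \setminus A$, so that $(X \setminus A^y) \times \{y\} \subseteq B \setminus A$ and therefore $|B \setminus A| = \frakc$.

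The step that looks like the main obstacle is really a mirage: since $\bigcup \mathcal{F}$ can be all of $2^\omega$, there may be no point escaping every $A$ at once, so one cannot simply extract $\frakc$ common escapees. The product-and-section device sidesteps this by asking, for each $A$ separately, only for one good level $y$, and the existence of a single null set $N$ supplying such a level for all $A$ simultaneously is exactly the non-cofinality of a subfamily of $\nul$ of size $<\cof(\nul)$. It is worth noting that this route does not appear to require the independence supplied by Fact \ref{fact:indep}.
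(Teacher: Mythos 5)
Your proof is correct, but it takes a genuinely different route from the paper. The paper proves the lemma by establishing a Tukey reduction: it shows $\mathcal{C} \leT (\nul, \nul, \subset^*)$, where $\mathcal{C}$ is the slalom poset from Bartoszy\'nski's characterization of $\cof(\nul)$ and $A \subset^* B$ means $\abs{A \setminus B} < \frakc$; the construction there needs an independent family of open sets (Fact~\ref{fact:indep}) and a Baire category argument inside a positive-measure perfect set $K^G$ disjoint from the given null set. You instead work directly with the definition of $\cof(\nul)$: split $2^\omega \cong X \times Y$, use Fubini to assign to each (Borel hull of an) $A \in \mathcal{F}$ the null set $Z_{\hat A}$ of bad levels, use $\abs{\mathcal{F}} < \cof(\nul)$ to find a null $N$ contained in no $Z_{\hat A}$, and take $B = X \times N$; the section argument then produces a full conull Borel slice $(X \setminus \hat A^y) \times \{y\} \subset B \setminus A$ of size $\frakc$ (size $\frakc$ because a conull Borel set contains a perfect set). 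All steps check out, including the reduction to Borel hulls and the automatic nonemptiness of $N$. Your closing remark is accurate: the independence fact is not needed on your route, and your proof is shorter and more elementary. It is worth noting that your argument is itself secretly a Tukey morphism --- the maps $N \mapsto X \times N$ and $A \mapsto Z_{\hat A}$ witness $(\nul, \subseteq) \leT (\nul, \nul, \subset^*)$ directly --- so it bypasses the slalom characterization entirely while still being in the spirit of the paper's Question about Tukey methods; what the paper's longer construction buys is the explicit combinatorial morphism from the slalom system, which is the form reused elsewhere in the Bartoszy\'nski--Judah framework.
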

	\begin{proof}
		This proof is based on \cite[Lemma 2.3.3]{bartoszynski1995set}.
		Let $\mathcal{C} = \{ S \in (\omega^{<\omega})^\omega : \sum \frac{S(n)}{(n+1)^2} < \infty \}$.
		And for $S, S' \in \mathcal{C}$, define $S \le S'$ by $S \le S' \iff (\forall^\infty)(S(n) \le S'(n))$.
		It is known that $\mathcal{C}$ and $\nul$ are Tukey equivalent.
		So it suffices to show that $\mathcal{C} \le_\mathrm{T} (\nul, \nul, \subset^*)$.
		Here $A \subset^* B$ means that $\abs{A \setminus B} < \frakc$.
		
		We have to construct $\varphi, \psi$ such that $\varphi \colon \mathcal{C} \to \nul$, $\psi \colon \nul \to \mathcal{C}$ and $(\forall S \in \mathcal{C})(\forall G \in \nul)(\varphi(S) \subset^* G \rightarrow S \le^* \psi(G))$ hold.
		
		By Fact \ref{fact:indep}, fix a sequence $(G_{n,i} : n, i \in \omega)$ of open sets such that
		$G_{n,i}$ has measure $1/(n+1)^2$ and the sequence $(G_{n,i} : n \in \omega)$ is independent for every $i \in \omega$.
		
		Define $\varphi \colon \mathcal{C} \to \nul$ by
		\[
		\varphi(S) = \bigcap_{m \in \omega} \bigcup_{n \ge m} \bigcup_{i \in S(n)} G_{n,i}.
		\]
		
		For $G \in \nul$, fix a perfect set $K^G$ of positive measure such that $G \cap K^G = \emptyset$.
		We can assume that $K^G \cap U \ne \emptyset$ implies $\mu(K^G \cap U) > 0$ for every basic open set $U$.
		Let $(U_n : n \in \omega)$ be an enumeration of all basic open sets $U$ such that $K^G \cap U \ne \emptyset$.
		Put
		\[
		A^G_{n,i} = \{ j \in \omega : K^G \cap U_n \cap G_{i,j} = \emptyset \}.
		\]
		Then we can show that $A^G_{n,i} \in \mathcal{C}$.
		Take a slalom $S \in \mathcal{C}$ such that $(A^G_{n,i} : i \in \omega) \le S$ for all $n \in \omega$.
		Define $\psi(G)$ by putting $\psi(G)$ be this $S$.
		
		We have to show $(\forall S \in \mathcal{C})(\forall G \in \nul)(\varphi(S) \subset^* G \rightarrow S \le^* \psi(G))$.
		Fix $S \in \mathcal{C}$ and $G \in \nul$.
		Then we have $\abs{\varphi(S) \cap K^G} \le \abs{\varphi(S) \setminus G} < \frakc$.
		Since $\varphi(S) \cap K_G$ is a Borel set, we have $\abs{\varphi(S) \cap K^G} \le \aleph_0$ by the perfect set theorem.
		
		We have
		\[
		\bigcap_{m \in \omega} (K^G \cap \bigcup_{n \ge m} \bigcup_{i \in S(n)} G_{n,i}) \cap \bigcap_{x \in \varphi(S) \cap K^G} (K^G \setminus \{x\}) = \emptyset.
		\]
		So by the Baire category theorem applied to the space $K^G$, at least one term in the above intersection is not dense in $K^G$.
		So, there is a $n_0 \in \omega$ such that $K^G \cap \bigcup_{n \ge n_0} \bigcup_{i \in S(n)} G_{n,i}$ is not dense in $K^G$.
		So we can take $m \in \omega$ such that $K^G \cap U_m \cap \bigcup_{n \ge n_0} \bigcup_{i \in S(n)} G_{n,i} = \emptyset$.
		Then we have $(\forall n \ge n_0)(\forall i \in S(n))(K^G \cap U_m \cap G_{n,i} = \emptyset)$.
		So we have $(\forall^\infty n)(S(n) \subset A^G_{m,n} \subset \psi(G)(n))$.
		Thus $S \le \psi(G)$ holds.
	\end{proof}
	
	\begin{thm}\label{thm:cpnull}
		$\cp(\nul \setminus \{\emptyset\}) = \cof(\nul)$.
	\end{thm}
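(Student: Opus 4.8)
The plan is to prove the two inequalities $\cp(\nul \setminus \{\emptyset\}) \le \cof(\nul)$ and $\cp(\nul \setminus \{\emptyset\}) \ge \cof(\nul)$ separately. The upper bound is immediate: a cofinal family of $\nul$ of minimal size $\cof(\nul)$ (which we may take to consist of nonempty sets) is a dominating family of $(\nul \setminus \{\emptyset\}, \subset)$, and every dominating family is automatically a comparable family. Hence $\cp(\nul \setminus \{\emptyset\}) \le \frakd(\nul \setminus \{\emptyset\}) = \cof(\nul)$. The whole content of the theorem therefore lies in the lower bound, which I would establish by contradiction: assuming a comparable family $F$ of size strictly less than $\cof(\nul)$, I will construct a single nonempty null set $B$ that is incomparable (with respect to $\subset$) to every member of $F$, contradicting comparability of $F$.

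For the construction I would first apply Lemma \ref{lem:cofnull} to $F$, obtaining $B_0 \in \nul$ with $\abs{B_0 \setminus A} = \frakc$ for every $A \in F$. This already secures one half of incomparability: since $B_0 \setminus A \ne \emptyset$, we have $B_0 \not\subset A$ for every $A \in F$ (and in particular $B_0 \ne \emptyset$). The only obstruction left is the collection of \emph{bad} members $A \in F$ with $A \subset B_0$; for such $A$ the sets $A$ and $B_0$ are comparable, so I must perturb $B_0$ to destroy each inclusion $A \subset B_0$ while keeping every difference $B_0 \setminus A'$ of size $\frakc$.

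The fix is to delete one point from each bad set. For every bad $A$ choose $p_A \in A$ (possible since $A \ne \emptyset$ as a member of $\nul \setminus \{\emptyset\}$), put $R = \{ p_A : A \in F,\ A \subset B_0 \}$, and set $B = B_0 \setminus R$. Because $\abs{R} \le \abs{F} < \cof(\nul) \le \frakc$, deleting $R$ leaves $\abs{B \setminus A} = \abs{(B_0 \setminus A) \setminus R} = \frakc$ for every $A \in F$, so $B$ is still nonempty and satisfies $B \not\subset A$ for all $A$. Conversely, for a bad $A$ we have $p_A \in A \setminus B$, while for a non-bad $A$ we have $\emptyset \ne A \setminus B_0 \subset A \setminus B$; in both cases $A \not\subset B$. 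Thus $B \in \nul \setminus \{\emptyset\}$ is incomparable to every member of $F$, which is the desired contradiction.

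The main obstacle is concentrated entirely in Lemma \ref{lem:cofnull}, which produces a null set meeting the complement of each $A \in F$ in $\frakc$-many points; once that is in hand the remaining argument is bookkeeping. The one point requiring care is that the pointwise perturbation $B_0 \mapsto B_0 \setminus R$ must not shrink any of the differences $B_0 \setminus A$ below $\frakc$, and this is precisely where the bound on $\abs{F}$ is used: the chain of inequalities $\abs{F} < \cof(\nul) \le \frakc$ forces $\abs{R} < \frakc$, so every $\frakc$-sized difference survives the deletion.
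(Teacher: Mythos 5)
Your proposal is correct and takes essentially the same route as the paper: both prove the lower bound by applying Lemma \ref{lem:cofnull} to a candidate family of size less than $\cof(\nul)$ and then deleting one chosen point $x_A \in A$ per member $A$ to obtain a nonempty null set incomparable with every member, using $\abs{\mathcal{F}} < \cof(\nul) \le \frakc$ to keep each difference of size $\frakc$. Your only deviation---deleting points only from the ``bad'' sets $A \subset B_0$ rather than from all of them, as the paper does---is an immaterial refinement.
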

	\begin{proof}
		It is clear that $\cp(\nul \setminus \{\emptyset\}) \le \cof(\nul)$.
		So it suffices to show $\cof(\nul) \le \cp(\nul \setminus \{\emptyset\})$.
		
		Suppose $\kappa < \cof(\nul)$ and take $\mathcal{F} \subset \nul \setminus \{\emptyset\}$ of size $\kappa$.
		Then by Lemma \ref{lem:cofnull}, we can take $B \in \nul$ such that for all $A \in \mathcal{F}$ we have $\abs{B \setminus A} = \frakc$.
		For each $A \in \mathcal{F}$, fix an element $x_A \in A$.
		Put $B' = B \setminus \{ x_A : A \in \mathcal{F} \}$.
		Then $B'$ is a incomparable with all $A \in \mathcal{F}$, since $x_A \in A \setminus B'$ and $\abs{B \setminus A} = \frakc$ and $\abs{B \setminus B'} < \frakc$.
	\end{proof}
	
	\begin{thm}\label{thm:icpnull}
		$\icp(\nul \setminus \{\emptyset\}) = \add(\nul)$.
	\end{thm}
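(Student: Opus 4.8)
The plan is to prove the two inequalities $\add(\nul) \le \icp(\nul \setminus \{\emptyset\})$ and $\icp(\nul \setminus \{\emptyset\}) \le \add(\nul)$ separately. The lower bound is the easy direction; it is an instance of the general inequality $\frakb(P) \le \icp(P)$ together with $\frakb(\nul \setminus \{\emptyset\}) = \add(\nul)$, but I would give the self-contained argument directly. If $F \subset \nul \setminus \{\emptyset\}$ has size strictly less than $\add(\nul)$, then by definition of additivity $A_0 = \bigcup F$ is again null, and it is nonempty since the members of $F$ are, so $A_0 \in \nul \setminus \{\emptyset\}$. Every $B \in F$ satisfies $B \subset A_0$, so no member of $F$ is incomparable with $A_0$; hence $F$ is not an incomparable family. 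This shows every incomparable family has size at least $\add(\nul)$.

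For the upper bound I would exhibit an incomparable family of size exactly $\add(\nul)$. First fix a witnessing family $\seq{N_\alpha : \alpha < \add(\nul)}$ of null sets whose union is not null, and disjointify it by setting $D_\alpha = N_\alpha \setminus \bigcup_{\beta < \alpha} N_\beta$. The $D_\alpha$ are pairwise disjoint null sets with $\bigcup_\alpha D_\alpha = \bigcup_\alpha N_\alpha \notin \nul$; writing $I = \{\alpha : D_\alpha \ne \emptyset\}$, the subfamily $\{D_\alpha : \alpha \in I\}$ still has non-null union, so $\abs{I} \ge \add(\nul)$, and as $\abs{I} \le \add(\nul)$ we get $\abs{I} = \add(\nul)$. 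Thus $F = \{ D_\alpha : \alpha \in I \}$ consists of $\add(\nul)$ many pairwise disjoint nonempty null sets with non-null union.

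It then remains to verify that $F$ is an incomparable family. Given any $A \in \nul \setminus \{\emptyset\}$, since $A$ is null and $\bigcup_\alpha D_\alpha$ is not, there is $\gamma \in I$ with $D_\gamma \not\subset A$. I would split into two cases. If some cell $D_{\alpha'}$ with $\alpha' \ne \gamma$ meets $A$, then by disjointness that intersection point lies in $A \setminus D_\gamma$, witnessing $A \not\subset D_\gamma$, so $D_\gamma$ is incomparable with $A$. Otherwise $A$ is disjoint from every $D_\beta$ with $\beta \ne \gamma$; choosing any such $\beta$ (possible since $\abs{I} \ge 2$), disjointness gives both $D_\beta \not\subset A$ and $A \not\subset D_\beta$, so $D_\beta$ is incomparable with $A$. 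In either case some member of $F$ is incomparable with $A$, whence $F$ is incomparable and $\icp(\nul \setminus \{\emptyset\}) \le \abs{F} = \add(\nul)$.

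The conceptual point, and the step I expect to be the real obstacle, is the choice of $F$. The naive attempt of using an increasing chain $\seq{N_\alpha}$ itself fails, because the null set $A = N_\beta$ is comparable with every $N_\alpha$, so no chain is ever an incomparable family. Pairwise disjointness is exactly what repairs this: for disjoint cells, incomparability with an arbitrary null $A$ becomes almost automatic once one cell is not swallowed by $A$. Checking that the disjoint family genuinely catches \emph{every} null set, including a very small $A$ sitting inside a single cell and an $A$ disjoint from the whole union, is the heart of the matter, and the two-case analysis above is arranged precisely to cover all of these possibilities.
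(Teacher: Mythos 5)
Your proof is correct and follows essentially the same route as the paper: the same disjointification $D_\alpha = N_\alpha \setminus \bigcup_{\beta<\alpha} N_\beta$ of a witnessing family for $\add(\nul)$, and the same two-case verification that some cell is incomparable with a given $A \in \nul \setminus \{\emptyset\}$ (your split on whether $A$ meets another cell is equivalent to the paper's split on whether $A \subset B_\alpha$). The only difference is cosmetic: you spell out the lower bound $\add(\nul) \le \icp(\nul \setminus \{\emptyset\})$ via the union of a small family, which the paper dismisses as clear.
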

	\begin{proof}
		It is clear that $\add(\nul) \le \icp(\nul \setminus \{\emptyset\})$.
		So we have to show that $\icp(\nul \setminus \{\emptyset\}) \le \add(\nul)$.
		Take a sequence $\seq{A_\alpha : \alpha < \add(\nul)}$ of null sets whose union is not null.
		Put $B_\alpha = A_\alpha \setminus \bigcup_{\beta < \alpha} A_\beta$.
		Then $\mathcal{F} = \{ B_\alpha : \alpha <  \add(\nul) \} \setminus \{ \emptyset \}$ is an incomparable family.
		To prove this, let $C \in \nul \setminus \{ \emptyset \}$.
		Since we have $C \in \nul$ and $\bigcup \mathcal{F} \not \in \nul$, there is an $\alpha < \add(\nul)$ such that $B_\alpha \not \subset C$.
		If $C \not \subset B_\alpha$ holds, then we are done.
		If $C \subset B_\alpha$ holds, then we take another piece $B_\beta$. Then $C$ and $B_\beta$ are disjoint nonempty sets, in particular, they are incomparable.
	\end{proof}
	
	\begin{prop}\label{prop:mmcnul}
		$\mmc(\nul) = \non(\nul)$.
	\end{prop}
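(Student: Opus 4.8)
The plan is to establish the two inequalities $\non(\nul) \le \mmc(\nul)$ and $\mmc(\nul) \le \non(\nul)$ separately. Throughout I use that every set of size $< \non(\nul)$ is null, that adjoining one point to a null set keeps it null, and the basic feature of maximality: if a null set $Z$ is $\subset$-comparable with every member of a maximal chain, then $Z$ must already belong to the chain (otherwise it could be adjoined).

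For the lower bound I would take an arbitrary maximal chain $C \subset \nul$ and set $U = \bigcup C$. First I would observe that $U \notin \nul$: otherwise, if $U \in C$ then $U \cup \{x\}$ for some $x \notin U$, and if $U \notin C$ then $U$ itself, would be a null set comparable with every member of $C$ but outside $C$, contradicting maximality. The heart of the argument is then to inject $U$ into $C$. Given $x \in U$, put $V_x = \bigcup\{A \in C : x \notin A\}$; since $x$ lies in some null member of $C$ and $V_x$ sits below that member, $V_x$ is null. I claim both $V_x$ and $W_x := V_x \cup \{x\}$ lie in $C$: each is null, and each is comparable with every $A \in C$ (any $A$ with $x \notin A$ is contained in $V_x$, while any $A$ with $x \in A$ contains $V_x \cup \{x\}$), so maximality forces them into $C$. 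Thus $W_x$ is the $\subset$-least member of $C$ containing $x$, with immediate predecessor $V_x = W_x \setminus \{x\}$ in $C$. The map $x \mapsto W_x$ is therefore injective, since distinct $x \ne y$ with $W_x = W_y$ would produce two distinct immediate predecessors $W_x \setminus \{x\}$ and $W_x \setminus \{y\}$ of the same chain element. Hence $\abs{C} \ge \abs{U} \ge \non(\nul)$.

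For the upper bound I would fix a non-null set $X$ with $\abs{X} = \non(\nul)$ and an injective enumeration $X = \{x_\alpha : \alpha < \non(\nul)\}$. Setting $X_\alpha = \{x_\beta : \beta < \alpha\}$, each $X_\alpha$ has size $< \non(\nul)$ and is null, so $C_0 = \{X_\alpha : \alpha < \non(\nul)\}$ is a chain in $\nul$ of size $\non(\nul)$. To see $C_0$ is maximal, suppose $Z \in \nul$ is comparable with every $X_\alpha$. Then $Z \subset X$, for a point of $Z \setminus X$ would force $X_\alpha \subset Z$ for all $\alpha$ and hence $X \subset Z$, making $Z$ non-null; writing $Z = \{x_\alpha : \alpha \in B\}$, comparability of $Z$ with each $X_\alpha$ forces the index set $B$ to be an initial segment of $\non(\nul)$, so $Z = X_\alpha$ for some $\alpha$ and $Z \in C_0$. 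Thus $C_0$ is a maximal chain of size $\non(\nul)$, giving $\mmc(\nul) \le \non(\nul)$.

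The main difficulty is the lower bound's key claim that the "cut sets" $V_x$ and $W_x$ genuinely belong to $C$; everything hinges on reading off comparability with \emph{all} of $C$ from the chain condition and then invoking maximality in the correct direction, together with the observation that $V_x$ is null precisely because it lies below a null member of the chain. A minor bookkeeping point is whether one includes $\emptyset$ (i.e.\ works in $\nul$ or in $\nul \setminus \{\emptyset\}$), but since every countable set is null we have $\non(\nul) \ge \aleph_1$, so this affects the size of a maximal chain by at most one and is immaterial.
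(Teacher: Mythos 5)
Your proof is correct and follows essentially the same route as the paper: the same initial-segment chain $\{X_\alpha : \alpha < \non(\nul)\}$ for the upper bound, and for the lower bound the same injection of $\bigcup C$ into the chain via the least chain member containing $x$ (your $W_x$ is exactly the paper's $D_x = \bigcap \mathcal{R}_x$, just built from below as $\bigcup \mathcal{L}_x \cup \{x\}$). You merely supply details the paper leaves implicit, namely the verification that the witness chain is maximal and the injectivity argument via the incomparable pair $W_x \setminus \{x\}$, $W_x \setminus \{y\}$.
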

	\begin{proof}
		We first prove $\mmc(\nul) \le \non(\nul)$.
		Take a non-null set $X = \{ x_\alpha : \alpha< \non(\nul) \}$.
		For each $\alpha$, set $X_\alpha = \{ x_\beta : \beta < \alpha \}$.
		Then $\{ X_\alpha : \alpha < \non(\nul) \}$ is a maximal chain.
		
		We next prove $\non(\nul) \le \mmc(\nul)$.
		Take a maximal chain $\mathcal{C}$ of $\nul$.
		We have $\bigcup \mathcal{C} \not \in \nul$. In fact, otherwise, we can extend the chain $\mathcal{C}$ upwards.
		Set $X = \bigcup \mathcal{C}$.
		
		For each $x \in X$, put
		\begin{align*}
			\mathcal{L}_x &= \{ C \in \mathcal{C} : x \not \in C \}, \\
			\mathcal{R}_x &= \{ D \in \mathcal{C} : x \in D \}.
		\end{align*}
		Then we have $\mathcal{L}_x \cup \mathcal{R}_x = \mathcal{C}$ (disjoint union) and for every $C \in \mathcal{L}_x$ and $D \in \mathcal{R}_x$, $C \subset D$.
		We put $D_x = \bigcap \mathcal{R}_x$. By maximality of $\mathcal{C}$, we have $D_x \in \mathcal{C}$.
		In addition, it can be easily shown that the map $X \ni x \mapsto D_x \in \mathcal{C}$ is injective.
		
		Therefore, we have $\non(\nul) \le \abs{X} \le \abs{\mathcal{C}}$. So it holds that $\non(\nul) \le \mmc(\nul)$.
	\end{proof}
	
	\begin{prop}\label{prop:mmacnul}
		$\mmac(\nul \setminus \{ \emptyset \}) = \frakc$.
	\end{prop}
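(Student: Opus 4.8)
The plan is to prove both inequalities. For the upper bound $\mmac(\nul\setminus\{\emptyset\})\le\frakc$, I would simply check that the family of all singletons $\{\,\{x\}:x\in 2^\omega\,\}$ is a maximal antichain: distinct singletons are incomparable, and any nonempty null $C$ contains some point $x$, so $\{x\}\subseteq C$ witnesses that $C$ is comparable to a member. As this antichain has size $\frakc$, we get $\mmac\le\frakc$. It is worth stressing that the matching lower bound cannot be extracted from $\cp\le\mmac$, since $\cp(\nul\setminus\{\emptyset\})=\cof(\nul)$ may be strictly below $\frakc$ (for instance in the Sacks model $\cof(\nul)=\aleph_1<\frakc$); so a direct argument is needed.

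For the lower bound, let $\mathcal{A}$ be an arbitrary maximal antichain; I want $\abs{\mathcal{A}}\ge\frakc$, and I would split according to whether $\mathcal{A}$ contains a large element. \emph{Case 1: some $B_0\in\mathcal{A}$ has $\abs{B_0}=\frakc$.} Here I mimic the perturbation in the proof of Theorem \ref{thm:mmacomegaomega}. Fix distinct points $\{x_\alpha:\alpha<\frakc\}\subseteq B_0$ and distinct points $\{y_\alpha:\alpha<\frakc\}\subseteq 2^\omega\setminus B_0$, and put $C_\alpha=(B_0\setminus\{x_\alpha\})\cup\{y_\alpha\}$, a nonempty null set incomparable to $B_0$. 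By maximality each $C_\alpha$ is comparable to some $f_\alpha\in\mathcal{A}$, and incomparability of $C_\alpha$ with $B_0$ forces $f_\alpha\ne B_0$, hence $f_\alpha$ is incomparable to $B_0$. Splitting into $Y_0=\{\alpha:C_\alpha\subseteq f_\alpha\}$ and $Y_1=\{\alpha:f_\alpha\subseteq C_\alpha\}$, one of them has size $\frakc$. On $Y_1$ incomparability gives $\emptyset\ne f_\alpha\setminus B_0\subseteq\{y_\alpha\}$, and on $Y_0$ it gives $\emptyset\ne B_0\setminus f_\alpha\subseteq\{x_\alpha\}$; since the $x_\alpha$ (respectively $y_\alpha$) are distinct, $\alpha\mapsto f_\alpha$ is injective on the relevant set, so $\abs{\mathcal{A}}\ge\frakc$.

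\emph{Case 2: every element of $\mathcal{A}$ has size $<\frakc$.} I argue by contradiction, assuming $\abs{\mathcal{A}}<\frakc$. Choosing $a_A\in A$ for each $A\in\mathcal{A}$, the set $D=\{a_A:A\in\mathcal{A}\}$ has size $<\frakc$. The key step is to produce a perfect null set $K$ of size $\frakc$ disjoint from $D$: I fix a perfect null set, split it via a homeomorphism with $2^\omega\times 2^\omega$ into $\frakc$ pairwise disjoint perfect null ``fibers'' $\{Q_t:t\in 2^\omega\}$, and note that each point of $D$ lies in at most one fiber, so some $K=Q_{t^*}$ misses $D$ entirely. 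Then for every $A\in\mathcal{A}$ we have $a_A\in A\setminus K$, whence $A\not\subseteq K$, while $\abs{A}<\frakc=\abs{K}$ gives $K\not\subseteq A$; thus $K$ is a nonempty null set incomparable to all of $\mathcal{A}$, contradicting maximality. Hence $\abs{\mathcal{A}}\ge\frakc$.

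Combining the two cases yields $\mmac\ge\frakc$, and with the upper bound $\mmac(\nul\setminus\{\emptyset\})=\frakc$. The main obstacle is precisely the reliance of Case 1 on a large reference element: unlike $\omega^\omega\setminus\zero$, where every point has infinite support, a maximal antichain here can consist of arbitrarily small sets (singletons being the extreme), so the perturbation method does not apply uniformly, and this is exactly why the case distinction is forced. I expect the delicate point to be the all-small case, where one must avoid any appeal to the regularity of $\frakc$ (a naive union bound would only give $\cf(\frakc)$ or $\cov(\nul)$); the pairwise-disjoint-fiber pigeonhole is what sidesteps this and makes the argument go through for singular $\frakc$ as well.
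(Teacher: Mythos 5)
Your proof is correct, but it takes a genuinely different route from the paper's. The paper (following \cite[Proposition 2.3]{ahp2016}) gives a single uniform extension argument with no case distinction: it fixes two disjoint null sets $A, A'$ of size $\frakc$, closes $\mathcal{A} \cup \{A, A'\}$ under finite unions, intersections and differences to obtain a family $\mathcal{C}$ of size $< \frakc$, chooses nonempty $C_0 \subset A'$ and $C_1 \subset A$ containing no nonempty member of $\mathcal{C}$ (possible because the density of each of $\Pow(A) \setminus \{\emptyset\}$ and $\Pow(A') \setminus \{\emptyset\}$ is $\frakc$), and verifies that $D = (A \setminus C_1) \cup C_0$ is incomparable with every member of $\mathcal{A}$, so every antichain of size $< \frakc$ properly extends. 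Your Case 1 instead transplants the perturbation--injection scheme of Theorem \ref{thm:mmacomegaomega} and gets $\abs{\mathcal{A}} \ge \frakc$ directly, without exhibiting an extension, while your Case 2 produces an extending element by a perfect-set argument; the algebra-closure trick is precisely what lets the paper handle the all-small case (e.g.\ antichains of singletons) and the large-member case simultaneously, which is why your method forces the dichotomy and the paper's does not. What your version buys is elementariness: no closure under Boolean operations and no density computation, and Case 1 yields the injection into $\mathcal{A}$ explicitly. Two small points: in Case 1 you should justify that $2^\omega \setminus B_0$ has size $\frakc$ (it is co-null, hence contains a perfect set); and in Case 2 the disjoint-fiber pigeonhole is more machinery than needed --- for any fixed perfect null set $P$, the set $K = P \setminus D$ is already null, disjoint from $D$, and of size $\frakc$, because removing fewer than $\frakc$ points from a set of size $\frakc$ leaves a set of size $\frakc$ (the sum of two cardinals below $\frakc$ is below $\frakc$), so no appeal to regularity of $\frakc$ arises there either.
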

	\begin{proof}
		This proof is based on \cite[Proposition 2.3]{ahp2016}.
		Clearly, $\{ \{x\} : x \in 2^\omega \}$ is a maximal antichain of $\nul \setminus \{ \emptyset \}$. So we have $\mmac(\nul \setminus \{ \emptyset \}) \le \frakc$.
		
		Let $A, A' \in \nul$ be such that $\abs{A} = \abs{A'} = \frakc$ and $A \cap A' = \emptyset$.
		To prove $\mmac(\nul \setminus \{ \emptyset \}) \ge \frakc$, let $\mathcal{A}$ be an antichain of size ${<} \frakc$.
		Let $\mathcal{C}$ be the closure of $\mathcal{A} \cup \{A, A'\}$ under the operation of finite unions, finite intersections and taking difference sets.
		Since we have $\abs{\mathcal{C}} < \frakc$, which is the density of each of $\Pow(A) \setminus \{ \emptyset \}$ and $\Pow(A') \setminus \{ \emptyset \}$, we can take $C_0 \subset A'$ and $C_1 \subset A$ nonempty such that
		\begin{align}
			\neg (\exists B \in \mathcal{C}\setminus \{ \emptyset \})(B \subset C_0 \text{ or } B \subset C_1). \tag{$*$}\label{eq:nondense}
		\end{align}
		Set $D = (A \setminus C_1) \cup C_0$.
		
		We claim $D \not \in \mathcal{A}$. If $D \in \mathcal{A}$ holds, then we have $D \setminus A = C_0 \in \mathcal{C} \setminus \{ \emptyset \}$, which contradicts (\ref{eq:nondense}).
		Fix $X \in \mathcal{A}$ arbitrary.
		We next claim $D$ and $X$ are incomparable.
		If $D \subset X$, then $A \setminus X \subset A \setminus D = C_1$ holds. This contradicts  $A \setminus X \in \mathcal{C} \setminus \{ \emptyset \}$ and (\ref{eq:nondense}).
		If $X \subset D$, then $X \setminus A \subset D \setminus A = C_0$ holds. This contradicts  $X \setminus A \in \mathcal{C} \setminus \{ \emptyset \}$ and (\ref{eq:nondense}).
		
		Therefore, we have $\mathcal{A} \cup \{ D \}$ is bigger antichain than $\mathcal{A}$. So $\mathcal{A}$ is not maximal.
	\end{proof}
	
	\section{The cardinal invariants of the ideal $\meager$}\label{sec:meagerideal}
	
	In this section, we determine the values $\cp(\meager \setminus \{\emptyset\})$ and $\icp(\meager \setminus \{\emptyset\})$ by the same method as in the previous section.
	
	\begin{defi}
		For an interval partition $\mathbb{I} = (I_n : n \in \omega)$ and a real $x \in 2^\omega$, we put
		\[\Match(x, \mathbb{I}) = \{ y \in 2^\omega : (\exists^\infty n)(x \upharpoonright I_n = y \upharpoonright I_n) \}.\]
	\end{defi}
	
	\begin{fact}\label{fact:match}
		\begin{enumerate}
			\item $\Match(x, \mathbb{I})$ is a comeager set for every interval partition $\mathbb{I} = (I_n : n \in \omega)$ and every real $x \in 2^\omega$.
			\item \cite[Theorem 5.2]{blass2010combinatorial} For every meager set $A \subset 2^\omega$, there is an interval partition $\mathbb{I} = (I_n : n \in \omega)$ and a real $x \in 2^\omega$ such that $A \cap \Match(x, \mathbb{I}) = \emptyset$.
		\end{enumerate}
	\end{fact}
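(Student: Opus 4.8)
Since part (2) is attributed to \cite[Theorem 5.2]{blass2010combinatorial}, I would treat it as imported and concentrate the argument on part (1), which is an elementary Baire-category computation. The plan is to exhibit $\Match(x,\mathbb{I})$ as a dense $G_\delta$ set and invoke the definition of comeagerness directly.

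First I would rewrite the defining condition. For each $n$ set $E_n = \{ y \in 2^\omega : y \restrict I_n = x \restrict I_n \}$. Since $I_n$ is finite, $E_n$ depends on only finitely many coordinates and is therefore a nonempty clopen subset of $2^\omega$. The condition $(\exists^\infty n)(y \restrict I_n = x \restrict I_n)$ is exactly $y \in \bigcap_{m \in \omega} \bigcup_{n \ge m} E_n$, so $\Match(x,\mathbb{I}) = \bigcap_{m \in \omega} U_m$, where $U_m = \bigcup_{n \ge m} E_n$. Each $U_m$ is open, being a union of clopen sets.

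The one thing to check is that every $U_m$ is dense. Given a basic clopen set determined by a finite string $s \in 2^{<\omega}$, I would use the fact that $\mathbb{I}$ is an interval partition of $\omega$: the left endpoints of the intervals increase to infinity, so there is some $n \ge m$ with $I_n \cap \dom(s) = \emptyset$ (indeed $\min I_n > \abs{s}$ for all large $n$). Define $y$ to agree with $s$ on $\dom(s)$, with $x$ on $I_n$, and arbitrarily elsewhere; this is well defined because $\dom(s)$ and $I_n$ are disjoint. Then $y$ lies in the cylinder over $s$ and in $E_n \subset U_m$, witnessing density. Hence $\Match(x,\mathbb{I})$ is a countable intersection of dense open sets, and so comeager.

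There is essentially no obstacle in part (1): the content is the standard observation that matching $x$ infinitely often on the blocks of $\mathbb{I}$ is a genericity condition. The real substance of the Fact lies in part (2) --- that every meager set can be \emph{avoided} by some matching set --- and that is precisely why it is cited rather than reproved here, its proof going through the combinatorial (interval-partition) characterization of the meager ideal in Blass's survey, which is exactly the machinery this section will reuse for the analysis of $\meager \setminus \{\emptyset\}$.
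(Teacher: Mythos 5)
Your proof of part (1) is correct and is exactly the standard argument: writing $\Match(x,\mathbb{I}) = \bigcap_{m} \bigcup_{n \ge m} E_n$ with each $E_n$ nonempty clopen and each tail-union open dense (using that the intervals of $\mathbb{I}$ eventually lie beyond any finite string) is precisely the dense-$G_\delta$ computation this Fact rests on. The paper gives no proof at all --- part (1) is stated as well known and part (2) is delegated to the citation of Blass --- so your write-up simply supplies the routine argument the paper omits, and your decision to treat part (2) as imported matches the paper's own handling.
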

	
	\begin{lem}\label{lem:match}
		Let $\mathbb{I} = (I_n : n \in \omega), \mathbb{J} = (J_k : k \in \omega) \in \IP$ and $x, y \in 2^\omega$. Suppose that $\abs{J_k} \ge 2$ for every $k$.
		Then the following are equivalent.
		\begin{enumerate}
			\item $\Match(x, \mathbb{I}) \not \subset \Match(y, \mathbb{J})$.
			\item The set $\Match(x, \mathbb{I}) \setminus \Match(y, \mathbb{J})$ has size $\frakc$.
			\item $(\exists^\infty n)(\forall k)(J_k \not \subset I_n \text{ or } x \upharpoonright J_k \ne y \upharpoonright J_k)$
		\end{enumerate}
	\end{lem}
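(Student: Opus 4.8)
The plan is to prove the cycle of implications $(1) \Rightarrow (3) \Rightarrow (2) \Rightarrow (1)$. The implication $(2) \Rightarrow (1)$ is immediate, since a set of size $\frakc$ is in particular nonempty, so $\Match(x, \mathbb{I}) \setminus \Match(y, \mathbb{J}) \ne \emptyset$.

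For $(1) \Rightarrow (3)$ I would argue by contraposition, showing that $\neg(3)$ forces $\Match(x, \mathbb{I}) \subseteq \Match(y, \mathbb{J})$. Assume $\neg(3)$: there is $N$ so that for every $n \ge N$ there is an index $k_n$ with $J_{k_n} \subseteq I_n$ and $x \upharpoonright J_{k_n} = y \upharpoonright J_{k_n}$. Given $z \in \Match(x, \mathbb{I})$, look at the infinitely many $n \ge N$ with $z \upharpoonright I_n = x \upharpoonright I_n$; for each such $n$ we get $z \upharpoonright J_{k_n} = x \upharpoonright J_{k_n} = y \upharpoonright J_{k_n}$, and distinct values of $n$ produce distinct blocks $J_{k_n}$ since the $I_n$ are disjoint. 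Hence $z$ matches $y$ on infinitely many $J$-blocks, i.e. $z \in \Match(y, \mathbb{J})$.

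The main work is $(3) \Rightarrow (2)$, where I would directly build an injection $2^\omega \to \Match(x, \mathbb{I}) \setminus \Match(y, \mathbb{J})$. Call $n$ \emph{good} if $(\forall k)(J_k \not\subseteq I_n \text{ or } x \upharpoonright J_k \ne y \upharpoonright J_k)$; by $(3)$ there are infinitely many good $n$. First I would thin the good blocks to a sparse infinite sequence $I_{n_0}, I_{n_1}, \dots$, chosen so that the gap between consecutive used blocks (the coordinates strictly between $I_{n_t}$ and $I_{n_{t+1}}$) contains a complete block $J_{k(t)}$ of $\mathbb{J}$. This is possible because every sufficiently long interval of $\omega$ contains a whole $J$-block, and each such block has at least $2$ elements by hypothesis. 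Then for $w \in 2^\omega$ I would define $z_w$ by setting $z_w = x$ on every used block $I_{n_j}$, setting $z_w(c) = 1 - y(c)$ (disagreeing with $y$) on all remaining coordinates, and finally overwriting $z_w$ at one designated coordinate $d_t \in J_{k(t)}$ by $z_w(d_t) = w(t)$.

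Finally I would verify the three properties. Membership in $\Match(x, \mathbb{I})$ is clear, since $z_w$ copies $x$ on the infinitely many used blocks; injectivity is clear, since distinct $w$ differ at some $d_t$, giving $\frakc$ many reals. For $z_w \notin \Match(y, \mathbb{J})$ I would check the stronger statement that $z_w$ matches $y$ on \emph{no} block of $\mathbb{J}$: a block $J_k \subseteq \bigcup_j I_{n_j}$ lies inside a single $I_{n_j}$ (the gaps being nonempty prevents it from crossing), whence $x \upharpoonright J_k \ne y \upharpoonright J_k$ by goodness and thus $z_w \upharpoonright J_k \ne y \upharpoonright J_k$; every other block meets the free region at a coordinate where $z_w$ was set to disagree with $y$, and in particular the gap block $J_{k(t)}$ retains a disagreeing coordinate even after the single overwrite at $d_t$ precisely because $\abs{J_{k(t)}} \ge 2$. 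I expect the main obstacle to be exactly this last bookkeeping: arranging the used good blocks sparsely enough that the gaps host complete $J$-blocks, and exploiting $\abs{J_k} \ge 2$ to reconcile the two competing demands within one block, namely branching (to reach size $\frakc$) and permanent disagreement with $y$ (to stay out of $\Match(y, \mathbb{J})$).
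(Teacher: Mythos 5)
Your proposal is correct and takes essentially the same approach as the paper: the only substantive direction is (3)$\Rightarrow$(2), which both you and the paper prove by injecting $2^\omega$ into $\Match(x,\mathbb{I}) \setminus \Match(y,\mathbb{J})$ via reals that copy $x$ on infinitely many good $I$-blocks, take the value $1-y$ everywhere else, and carry one coding bit per branching site, with $\abs{J_k} \ge 2$ guaranteeing a surviving disagreement next to each coding coordinate. The differences are pure bookkeeping --- the paper codes at $\min I_n$ of alternate good blocks and uses non-adjacency of the chosen good indices, where you code inside whole $J$-blocks placed in gaps between sparsely chosen good blocks --- and you additionally write out (1)$\Rightarrow$(3), which the paper dismisses as ``not difficult.''
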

	\begin{proof}
		This lemma is an improvement of \cite[Proposition 5.3]{blass2010combinatorial}.
		That (2) implies (1) is clear. Moreover, that (1) implies (3) is not difficult.
		So we shall show (3) implies (2).
		Take an infinite set $A \subset \omega$ such that
		\[
		(\forall n \in A)(\forall k)(J_k \not \subset I_n \text{ or } x \upharpoonright J_k \ne y \upharpoonright J_k). \tag{$*$}
		\]
		We can assume that 
		\[(\forall n)(\{n, n+1\} \not \subset A). \tag{$**$} \]
		Let
		\begin{align*}
			A' &= \{ n \in A : \text{$n$ is $2l$-th element of $A$ for some $l$}\} \\
			A'' &= \{ n \in A : \text{$n$ is $(2l+1)$-th element of $A$ for some $l$}\}
		\end{align*}

		For $z \in 2^\omega$, we put
		\[
		w_z(m) = \begin{cases}
			x(m) & \text{(if $m \in \bigcup_{n \in A'} I_n$)} \\
			z(l) & \text{(if $m$ is $l$-th element of $\bigcup_{n\in A''} \{\min I_n\}$)} \\
			1 - y(m) & \text{otherwise}
		\end{cases}
		\]
		
		Since $(\forall n \in A)(w_z \upharpoonright I_{n} = x \upharpoonright I_{n})$ holds, we have $w_z \in \Match(x, \mathbb{I})$.
		
		We now prove that $w_z \not \in \Match(y, \mathbb{J})$.
		In order to prove it, let $k \in \omega$.
		
		Suppose that there is an $n \in \omega$ such that $J_k \subset I_n$.
		If $n \in A'$ then we have $w_z \upharpoonright J_k = x \upharpoonright J_k \ne y \upharpoonright J_k$ by ($*$).
		If $n \not \in A'$, then we have either $n \in A''$ or 	$n \in \omega \setminus A$.
		In the former case, $w_z(m) \ne y(m)$ for $m \in J_k \setminus \{\min I_n\}$. Here we used $\abs{J_k} \ge 2$. In the latter case, we have $w_z(m) = 1 - y(m) \ne y(m)$ for every $m \in J_k$.
		
		Suppose that for every $n \in \omega$ we have $J_k \not \subset I_n$.
		Then $J_k$ touches greater than or equal to $2$ intervals in $\mathbb{I}$.
		At least one $n$ of them satisfies $n \not \in A$ by ($**$). Fix such an $n$.
		For $m \in J_k \cap I_n$, we have $w_z(m) = 1 - y(m) \ne y(m)$.
		So we have proved $(\forall k)(w_z \upharpoonright J_k \ne y \upharpoonright J_k)$.
		Thus, we have $w_z \not \in \Match(y, \mathbb{J})$.
		
		Since $w_z \ (z \in 2^\omega)$ are distinct reals, we are done.
	\end{proof}
	
	\begin{lem}\label{lem:cofmeager}
		If $\mathcal{F} \subset \meager$ is a family of size less than $\cof(\meager)$, then there is a $B \in \meager$ such that for all $A \in \mathcal{F}$ we have $\abs{B \setminus A} = \frakc$.
	\end{lem}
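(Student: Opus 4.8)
The plan is to adapt the proof of Lemma \ref{lem:cofnull} to the meager ideal, replacing the null-localization machinery by the matching machinery of Fact \ref{fact:match} and Lemma \ref{lem:match}. The witness $B$ will be taken of the special form $B = 2^\omega \setminus \Match(y, \mathbb{J})$ for a single, suitably chosen chopped real $(y, \mathbb{J})$.

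First I would fix, for each $A \in \mathcal{F}$, a chopped real witnessing the meagerness of $A$: by Fact \ref{fact:match}(2) there are $\mathbb{I}^A = (I^A_n : n \in \omega) \in \IP$ and $x^A \in 2^\omega$ with $A \cap \Match(x^A, \mathbb{I}^A) = \emptyset$, and I set $M_A = 2^\omega \setminus \Match(x^A, \mathbb{I}^A) \supseteq A$. Suppose now I can produce a pair $(y, \mathbb{J})$ with $\abs{J_k} \ge 2$ for all $k$ and with $\Match(x^A, \mathbb{I}^A) \not\subset \Match(y, \mathbb{J})$ for every $A \in \mathcal{F}$. Putting $B = 2^\omega \setminus \Match(y, \mathbb{J}) \in \meager$, Lemma \ref{lem:match}, applied to $\mathbb{I}^A$ in the role of $\mathbb{I}$ and to $\mathbb{J}$ in the role of the second partition (whose blocks have size $\ge 2$), upgrades mere noncontainment to $\abs{\Match(x^A, \mathbb{I}^A) \setminus \Match(y, \mathbb{J})} = \frakc$. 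Since $A$ is disjoint from $\Match(x^A, \mathbb{I}^A)$, we have $\Match(x^A, \mathbb{I}^A) \setminus \Match(y, \mathbb{J}) = \Match(x^A, \mathbb{I}^A) \cap B \subseteq B \setminus A$, and hence $\abs{B \setminus A} = \frakc$, as desired.

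It therefore remains to find such a pair $(y, \mathbb{J})$, and this is the heart of the argument. I would argue by contradiction: if no such pair existed, then every chopped real $(y, \mathbb{J})$ with all blocks of size $\ge 2$ would satisfy $\Match(x^A, \mathbb{I}^A) \subset \Match(y, \mathbb{J})$ for some $A \in \mathcal{F}$, equivalently $2^\omega \setminus \Match(y, \mathbb{J}) \subseteq M_A$. The key observation is that the meager sets of the form $2^\omega \setminus \Match(y, \mathbb{J})$ with $\abs{J_k} \ge 2$ are cofinal in $\meager$: given any meager $N$, Fact \ref{fact:match}(2) yields some $(y, \mathbb{J}_0)$ with $N \subseteq 2^\omega \setminus \Match(y, \mathbb{J}_0)$, and merging consecutive blocks of $\mathbb{J}_0$ into a coarser partition $\mathbb{J}$ with $\abs{J_k} \ge 2$ only shrinks the matching set (matching on a coarser block forces matching on each finer constituent), so $\Match(y, \mathbb{J}) \subseteq \Match(y, \mathbb{J}_0)$ and thus $N \subseteq 2^\omega \setminus \Match(y, \mathbb{J})$. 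Combining this with the contradiction hypothesis, every meager set $N$ would lie inside some $M_A$, making $\{M_A : A \in \mathcal{F}\}$ a cofinal family in $\meager$ of size $\le \abs{\mathcal{F}} < \cof(\meager)$, which is absurd.

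The only genuine obstacle is this last cofinality claim, and it rests on two points already available in the excerpt: that coarsening $\mathbb{J}$ shrinks $\Match(y, \mathbb{J})$, so the block-size normalization $\abs{J_k} \ge 2$ required to invoke Lemma \ref{lem:match} costs nothing and preserves cofinality; and the equivalence of Lemma \ref{lem:match} itself, which converts noncontainment into a difference set of full size $\frakc$. Everything else is bookkeeping in the style of Lemma \ref{lem:cofnull}.
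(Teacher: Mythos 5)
Your proof is correct and takes essentially the same route as the paper's: both fix witnesses $\Match(x_A, \mathbb{I}_A)$ disjoint from each $A \in \mathcal{F}$, use that the family $\{\Match(x_A, \mathbb{I}_A)^\comp : A \in \mathcal{F}\}$ has size below $\cof(\meager)$ and hence is not cofinal to produce a co-matching set $\Match(y, \mathbb{J})^\comp$ with $\abs{J_k} \ge 2$ contained in no $\Match(x_A, \mathbb{I}_A)^\comp$, and then invoke Lemma \ref{lem:match} to upgrade the nonemptiness of $\Match(x_A, \mathbb{I}_A) \setminus \Match(y, \mathbb{J})$ to size $\frakc$. Your contrapositive framing and your explicit coarsening argument for the normalization $\abs{J_k} \ge 2$ merely spell out steps the paper leaves implicit (``we can assume that $\abs{J_k} \ge 2$'').
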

	\begin{proof}
		For $A \in \mathcal{F}$, take $x_A \in 2^\omega$ and $\mathbb{I}_A \in \IP$ such that $A \cap \Match(x_A, \mathbb{I}_A) = \emptyset$.
		Since each $\Match(x_A, \mathbb{I}_A)^\comp$ is meager set, by the definition of $\cof(\meager)$, we can take $B \in \meager$ such that $B \setminus \Match(x_A, \mathbb{I}_A)^\comp \ne \emptyset$.
		Take $y \in 2^\omega$ and $\mathbb{J} \in \IP$ such that $B \cap \Match(y, \mathbb{J}) = \emptyset$.
		We can assume that $\abs{J_k} \ge 2$ for every $k \in \omega$.
		Then we have $\Match(y, \mathbb{J})^\comp \setminus \Match(x_A, \mathbb{I}_A)^\comp \ne \emptyset$. That is, we have $\Match(x_A, \mathbb{I}_A) \setminus \Match(y, \mathbb{J}) \ne \emptyset$.
		So by Lemma \ref{lem:match}, $\Match(x_A, \mathbb{I}_A) \setminus \Match(y, \mathbb{J})$ has size $\frakc$.
		Now put $C =  \Match(y, \mathbb{J})^\comp$.
		Then $C$ is meager and for all $A \in \mathcal{F}$, we have $\abs{C \setminus A} \ge \abs{\Match(x_A, \mathbb{I}_A) \setminus \Match(y, \mathbb{J})} \ge \frakc$.
		So $C$ witnesses the lemma.
	\end{proof}
	
	\begin{thm}
		$\cp(\meager \setminus \{\emptyset\}) = \cof(\meager)$.
	\end{thm}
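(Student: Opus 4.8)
The plan is to imitate the proof of Theorem \ref{thm:cpnull} almost verbatim, with Lemma \ref{lem:cofmeager} playing the role that Lemma \ref{lem:cofnull} played there. All of the measure/category-specific work has already been absorbed into Lemma \ref{lem:cofmeager} (and ultimately into the $\Match$ machinery of Lemma \ref{lem:match}), so what remains is a purely combinatorial perturbation argument.

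First I would dispatch the easy inequality $\cp(\meager \setminus \{\emptyset\}) \le \cof(\meager)$: any base of $(\meager, \subset)$ is in particular a comparable family, since every nonempty meager set is contained in some base element. Hence $\cp(\meager \setminus \{\emptyset\}) \le \cof(\meager)$.

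For the reverse inequality $\cof(\meager) \le \cp(\meager \setminus \{\emptyset\})$, I would show that no family of size $\kappa < \cof(\meager)$ can be comparable. So fix $\mathcal{F} \subset \meager \setminus \{\emptyset\}$ with $\abs{\mathcal{F}} = \kappa < \cof(\meager)$. Apply Lemma \ref{lem:cofmeager} to obtain $B \in \meager$ with $\abs{B \setminus A} = \frakc$ for every $A \in \mathcal{F}$. For each $A \in \mathcal{F}$ pick a point $x_A \in A$, and set $B' = B \setminus \{ x_A : A \in \mathcal{F}\}$. I claim $B'$ is incomparable with every $A \in \mathcal{F}$: the point $x_A$ lies in $A \setminus B'$, so $A \not\subset B'$; and since $\abs{B' \setminus A} \ge \abs{(B \setminus A) \setminus \{ x_A : A \in \mathcal{F}\}} = \frakc$ (because only $\kappa$ points were removed and $\kappa < \cof(\meager) \le \frakc$), we have $B' \setminus A \ne \emptyset$, so $B' \not\subset A$. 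The same cardinality bound shows $B'$ is nonempty and meager (being a subset of $B$). Thus $B'$ witnesses that $\mathcal{F}$ fails to be a comparable family, giving $\cp(\meager \setminus \{\emptyset\}) > \kappa$ for every $\kappa < \cof(\meager)$, i.e. $\cof(\meager) \le \cp(\meager \setminus \{\emptyset\})$.

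I do not expect any genuine obstacle here, precisely because the hard part is quarantined in Lemma \ref{lem:cofmeager}. The only point demanding a moment's care is the cardinality bookkeeping: deleting the $\kappa$-many points $x_A$ must leave $B'$ both nonempty and with $\frakc$-sized difference from each $A \in \mathcal{F}$, which is exactly what $\kappa < \frakc$ together with $\abs{B \setminus A} = \frakc$ guarantees.
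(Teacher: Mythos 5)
Your proposal is correct and matches the paper's argument exactly: the paper proves this theorem by rerunning the proof of Theorem \ref{thm:cpnull} with Lemma \ref{lem:cofmeager} in place of Lemma \ref{lem:cofnull}, which is precisely the perturbation argument (pick $x_A \in A$, set $B' = B \setminus \{x_A : A \in \mathcal{F}\}$, use $\abs{B \setminus A} = \frakc$ and $\abs{B \setminus B'} < \frakc$) that you carried out. Your extra cardinality bookkeeping ($\kappa < \cof(\meager) \le \frakc$ keeping $B'$ nonempty with $\frakc$-sized differences) is exactly the implicit content of the paper's proof, just spelled out.
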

	\begin{proof}
		This theorem can be shown by the same proof on Theorem \ref{thm:cpnull} using Lemma \ref{lem:cofmeager} instead of Lemma \ref{lem:cofnull}.
	\end{proof}
	
	\begin{thm}\label{thm:icpmeager}
		$\icp(\meager \setminus \{\emptyset\}) = \add(\meager)$.
	\end{thm}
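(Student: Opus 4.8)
The plan is to follow the proof of Theorem \ref{thm:icpnull} essentially verbatim, because that argument used only the ideal-theoretic structure of $\nul$ and never any measure-specific feature. I would split the proof into the two inequalities.

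First I would establish the lower bound $\add(\meager) \le \icp(\meager \setminus \{\emptyset\})$, which is immediate. If $\mathcal{F} \subset \meager \setminus \{\emptyset\}$ is an incomparable family, then $\bigcup \mathcal{F}$ cannot be meager: otherwise $\bigcup \mathcal{F}$ would itself lie in $\meager \setminus \{\emptyset\}$, and applying incomparability to $C = \bigcup \mathcal{F}$ would produce some $B \in \mathcal{F}$ with $B \not\subset \bigcup \mathcal{F}$, which is absurd. Hence $\mathcal{F}$ is a family of meager sets with non-meager union, so $\abs{\mathcal{F}} \ge \add(\meager)$.

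For the upper bound $\icp(\meager \setminus \{\emptyset\}) \le \add(\meager)$ I would repeat the construction from Theorem \ref{thm:icpnull}. Fix a sequence $\seq{A_\alpha : \alpha < \add(\meager)}$ of meager sets whose union is not meager, and set $B_\alpha = A_\alpha \setminus \bigcup_{\beta < \alpha} A_\beta$. Then $\mathcal{F} = \{ B_\alpha : \alpha < \add(\meager) \} \setminus \{\emptyset\}$ has size at most $\add(\meager)$, its members are pairwise disjoint, and $\bigcup \mathcal{F} = \bigcup_\alpha A_\alpha \notin \meager$. To check $\mathcal{F}$ is incomparable, take $C \in \meager \setminus \{\emptyset\}$; since $C$ is meager but $\bigcup \mathcal{F}$ is not, some $B_\alpha \not\subset C$. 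If also $C \not\subset B_\alpha$ we are done; otherwise $C \subset B_\alpha$, and since $\bigcup \mathcal{F} \setminus B_\alpha$ is non-meager there is a nonempty $B_\beta \in \mathcal{F}$ with $\beta \ne \alpha$, which is disjoint from $C$ and hence incomparable with it.

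I do not expect any genuine obstacle here. The only point worth verifying is that every ingredient of the null proof—the additivity of the ideal, the pairwise disjointness of the pieces $B_\alpha$, and the fact that two disjoint nonempty sets are incomparable—is purely combinatorial and carries over to $\meager$ unchanged. In contrast to the companion results (Lemma \ref{lem:cofmeager} via the matching machinery of Lemma \ref{lem:match}), this statement requires none of that apparatus, so the argument is in effect a one-line transfer of the $\nul$ case.
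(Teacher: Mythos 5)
Your proposal is correct and is essentially the paper's own proof: the paper proves Theorem~\ref{thm:icpmeager} by citing verbatim the argument of Theorem~\ref{thm:icpnull}, i.e.\ the disjointification $B_\alpha = A_\alpha \setminus \bigcup_{\beta<\alpha} A_\beta$ of a witness to $\add(\meager)$, with the same case split (either $C \not\subset B_\alpha$, or $C \subset B_\alpha$ and one passes to another disjoint piece). Your spelled-out lower bound, applying incomparability to $C = \bigcup\mathcal{F}$, is just an elaboration of the step the paper calls clear, so there is nothing further to fix.
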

	\begin{proof}
		This can be shown by the same argument of Theorem \ref{thm:icpnull}.
	\end{proof}

	\begin{prop}\label{prop:mmcmeager}
		$\mmc(\meager) = \non(\meager)$ and $\mmac(\meager \setminus \{ \emptyset \}) = \frakc$ hold.
	\end{prop}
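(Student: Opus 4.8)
The plan is to observe that Proposition~\ref{prop:mmcmeager} is the exact meager-ideal analogue of Propositions~\ref{prop:mmcnul} and~\ref{prop:mmacnul}, and to transfer both proofs verbatim after checking that the only properties of $\nul$ they used are shared by $\meager$. Those properties are: $\meager$ is a hereditary ideal (every subset of a meager set is meager), every singleton is meager (indeed nowhere dense), and there exist two disjoint meager sets $A, A' \subset 2^\omega$ with $\abs{A} = \abs{A'} = \frakc$. The last is obtained by splitting a nowhere dense perfect subset of $2^\omega$ into two disjoint pieces, each of size $\frakc$ and meager as a subset of a meager set. None of the arguments below uses countable additivity or any measure-theoretic feature, only these three facts; in particular none of the measure-specific work of Section~\ref{sec:nullideal} (or its meager counterpart, Lemma~\ref{lem:cofmeager}) is needed here.

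For $\mmc(\meager) = \non(\meager)$ I would repeat the proof of Proposition~\ref{prop:mmcnul}. For the upper bound, fix a non-meager set $X = \{ x_\alpha : \alpha < \non(\meager) \}$ of minimal size and put $X_\alpha = \{ x_\beta : \beta < \alpha \}$; each $X_\alpha$ has size $< \non(\meager)$ and is therefore meager, the family $\{ X_\alpha : \alpha < \non(\meager) \}$ is a chain, and it is maximal because its union is the non-meager set $X$. For the lower bound, take any maximal chain $\mathcal{C}$ of $\meager$; its union $X = \bigcup \mathcal{C}$ must be non-meager, since otherwise $X$ (or $X$ together with one extra point) would extend $\mathcal{C}$. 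For $x \in X$ set $\mathcal{R}_x = \{ D \in \mathcal{C} : x \in D \}$ and $D_x = \bigcap \mathcal{R}_x$; this intersection is meager (it is contained in any member of $\mathcal{R}_x$) and comparable with every element of $\mathcal{C}$, so maximality forces $D_x \in \mathcal{C}$. The map $x \mapsto D_x$ is injective: if $x \neq y$ then some member of $\mathcal{C}$ contains exactly one of them, and in that case $D_x \neq D_y$; the alternative, that $D_x = D_y =: D$, is impossible because then $D \setminus \{y\}$ would be a meager set comparable with all of $\mathcal{C}$, hence either a new member contradicting maximality or an old member contradicting that $D = D_x$ is the least member of $\mathcal{C}$ containing $x$. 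Thus $\non(\meager) \le \abs{X} \le \abs{\mathcal{C}}$.

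For $\mmac(\meager \setminus \{ \emptyset \}) = \frakc$ I would repeat the proof of Proposition~\ref{prop:mmacnul}. The family $\{ \{x\} : x \in 2^\omega \}$ consists of meager singletons, is pairwise incomparable, and meets every nonempty meager set (each of which contains a point), so it is a maximal antichain witnessing $\le \frakc$. For the reverse inequality, fix disjoint meager sets $A, A'$ of size $\frakc$, let $\mathcal{A}$ be an antichain of size $< \frakc$, and let $\mathcal{C}$ be the closure of $\mathcal{A} \cup \{A, A'\}$ under finite unions, intersections and differences, so $\abs{\mathcal{C}} < \frakc$. Since the density of each of $\Pow(A) \setminus \{ \emptyset \}$ and $\Pow(A') \setminus \{ \emptyset \}$ equals $\frakc$, there are nonempty $C_0 \subset A'$ and $C_1 \subset A$ containing no nonempty member of $\mathcal{C}$, and then $D = (A \setminus C_1) \cup C_0$ is meager (being a subset of $A \cup A'$), and the same computations $D \setminus A = C_0$, $A \setminus D = C_1$ as in Proposition~\ref{prop:mmacnul} show that $D \notin \mathcal{A}$ and that $D$ is incomparable with every element of $\mathcal{A}$; hence $\mathcal{A}$ is not maximal.

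I do not expect a genuine obstacle: the only ideal-specific input is the existence of two disjoint meager sets of size $\frakc$ together with hereditariness and the presence of singletons, and once these are recorded every step is purely combinatorial and identical to the $\nul$ case. The one point worth stating carefully is precisely that the density-and-closure computation in the antichain argument, and the injectivity of $x \mapsto D_x$ in the chain argument, rely only on $\meager$ being a hereditary ideal, which is what makes the transfer from Propositions~\ref{prop:mmcnul} and~\ref{prop:mmacnul} automatic.
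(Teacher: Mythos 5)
Your proposal is correct and takes essentially the same approach as the paper: the paper's proof of Proposition~\ref{prop:mmcmeager} consists precisely of the remark that the arguments of Propositions~\ref{prop:mmcnul} and~\ref{prop:mmacnul} transfer verbatim, and you carry out exactly that transfer, correctly identifying the only ideal-specific inputs needed (hereditariness, meager singletons, and two disjoint meager sets of size $\frakc$, obtained from a nowhere dense perfect set). Your explicit injectivity argument for the map $x \mapsto D_x$ and the density computation for $\Pow(A) \setminus \{\emptyset\}$ are faithful elaborations of steps the paper leaves implicit, not deviations from its route.
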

	\begin{proof}
		This proposition can be shown by the same argument of Proposition \ref{prop:mmcnul} and \ref{prop:mmacnul}.
	\end{proof}
	
	\section{The cardinal invariants of Turing degrees}
	
	In this section, we deal with the Turing degrees.
	Let $\TD^+$ denote the poset of all incomputable Turing degrees.
	
	The following fact is well-known.
	
	\begin{fact}
		$\mmac(\TD^+) = \frakc$ and $\mmc(\TD^+) = \aleph_1$.
	\end{fact}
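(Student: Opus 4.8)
The plan is to prove the two equalities separately, both of them resting on one elementary counting fact: for any real $A$ the lower cone $\{X : X \leT A\}$ equals $\{\Phi_e^A : e \in \omega, \ \Phi_e^A \text{ total}\}$ and is therefore countable, so in $\TD$ every degree has only countably many predecessors. I will write $\mathbf 0$ for the computable degree, use $\le$ for the ordering on degrees, and freely use the classical existence theorems for minimal degrees, minimal covers, and minimal upper bounds.

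For $\mmac(\TD^+) = \frakc$, the bound $\mmac(\TD^+) \le \frakc$ is automatic: an antichain is a subset of the $\frakc$-many incomputable degrees, and any antichain extends to a maximal one by Zorn's lemma. For the reverse inequality I would show that no maximal antichain has size less than $\frakc$. So suppose $\mathcal A \subset \TD^+$ is an antichain with $\abs{\mathcal A} < \frakc$; I will produce a degree incomparable with every member of $\mathcal A$, contradicting maximality. The device is to look for a \emph{minimal} degree $\mathbf g$, i.e.\ one whose only predecessors are $\mathbf 0$ and $\mathbf g$: for such a $\mathbf g$, if $\mathbf a \le \mathbf g$ then $\mathbf a \in \{\mathbf 0, \mathbf g\}$, so once $\mathbf g \notin \mathcal A$ and $\mathbf g \ne \mathbf 0$ the only way $\mathbf g$ could be comparable to some $\mathbf a \in \mathcal A$ is $\mathbf g \le \mathbf a$. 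By the counting fact the set $\bigcup_{\mathbf a \in \mathcal A} \{\mathbf x : \mathbf x \le \mathbf a\}$ has size at most $\abs{\mathcal A}\cdot\aleph_0 < \frakc$. Since there are $\frakc$-many minimal degrees (Sacks), I can choose a minimal degree $\mathbf g$ lying outside this union and not in $\mathcal A$; then $\mathbf g \not\le \mathbf a$ and $\mathbf a \not\le \mathbf g$ for every $\mathbf a \in \mathcal A$, so $\mathcal A \cup \{\mathbf g\}$ is a strictly larger antichain.

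For $\mmc(\TD^+) = \aleph_1$, the lower bound $\mmc(\TD^+) \ge \aleph_1$ follows by showing no countable chain is maximal: given a countable chain $C$ with representatives $\seq{x_n : n \in \omega}$, the degree $\mathbf b$ of $\bigoplus_n x_n$ is an upper bound of $C$, and its jump $\mathbf b'$ is strictly above every element of $C$, hence comparable with all of them and not in $C$, so $C$ extends. For $\mmc(\TD^+) \le \aleph_1$ I would construct a maximal chain $\seq{\mathbf d_\alpha : \alpha < \omega_1}$ by recursion: take $\mathbf d_0$ a minimal degree; take $\mathbf d_{\alpha+1}$ a minimal cover of $\mathbf d_\alpha$ (these exist as degrees minimal over $\mathbf d_\alpha$); and at a countable limit $\lambda$ take $\mathbf d_\lambda$ to be a minimal upper bound of a cofinal $\omega$-subsequence of $\seq{\mathbf d_\beta : \beta < \lambda}$. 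To verify maximality, let $\mathbf a$ be comparable with every $\mathbf d_\alpha$; the set $I = \{\alpha : \mathbf d_\alpha \le \mathbf a\}$ is an initial segment of $\omega_1$, and it is countable by the counting fact (those degrees all lie below $\mathbf a$), so $I = \delta$ for some countable $\delta$ and $\mathbf a < \mathbf d_\alpha$ for all $\alpha \ge \delta$. If $\delta = \gamma + 1$ then $\mathbf d_\gamma \le \mathbf a < \mathbf d_{\gamma+1}$, forcing $\mathbf a = \mathbf d_\gamma \in C$ by minimality of the cover; if $\delta$ is a limit, $\mathbf a$ is an upper bound of $\seq{\mathbf d_\beta : \beta < \delta}$ lying strictly below the minimal upper bound $\mathbf d_\delta$, which is impossible. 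The remaining case $I = \omega_1$ cannot occur, since an upper bound of the whole $\omega_1$-chain would have $\aleph_1$ predecessors, again contradicting the counting fact. Hence $\mathbf a \in C$, the chain is maximal, and $\abs{C} = \aleph_1$.

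The routine parts are the gap analysis for maximality and the counting lemma; the real weight sits in the degree-theoretic existence inputs. \textbf{The main obstacle is the limit stage}: I need minimal upper bounds for ascending $\omega$-sequences of degrees, which is the most delicate of the classical facts invoked. To apply it cleanly I would arrange the recursion so that the cofinal subsequences chosen at limits are presented uniformly, since the minimal-upper-bound theorem is most transparent for uniformly given ascending sequences; the existence of $\frakc$-many minimal degrees, of minimal covers, and of the initial minimal degree are then standard citations.
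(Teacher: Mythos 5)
Your proposal is correct, and its $\mmac(\TD^+) = \frakc$ half is essentially the paper's own argument: the upper bound is trivial, and for the lower bound both you and the paper take a maximal antichain $\mathcal{A}$ of size less than $\frakc$, note that $\mathcal{A}{\downarrow}$ has size less than $\frakc$ by countability of lower cones, and pick one of Sacks' continuum many minimal degrees outside $\mathcal{A}{\downarrow}$, which minimality makes incomparable with everything in $\mathcal{A}$. The lower bound $\mmc(\TD^+) \ge \aleph_1$ also coincides ($\sigma$-upward directedness, with the jump supplying a strict extension of any countable chain). Where you genuinely diverge is the upper bound $\mmc(\TD^+) \le \aleph_1$. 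The paper gets it purely order-theoretically from countable lower cones: every element of a chain has only countably many chain-predecessors, and a linear order with that property has cardinality at most $\aleph_1$, so \emph{every} maximal chain (which exists by Zorn and is uncountable by the lower bound) has size exactly $\aleph_1$ --- no degree theory beyond countable cones is needed. You instead construct an explicit maximal chain of order type $\omega_1$ by transfinite recursion, importing three classical existence theorems: minimal degrees, minimal covers at successors, and minimal upper bounds of ascending sequences at limits; your maximality verification via the initial segment $I = \{ \alpha : \mathbf{d}_\alpha \leT \mathbf{a} \}$ is sound, though you should add the case $\delta = 0$, handled by minimality of $\mathbf{d}_0$ (which you evidently chose for exactly this purpose). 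The trade-off: your route carries the full weight of the minimal-upper-bound theorem --- the delicate point you rightly flag, including the uniformity issue at limits --- but rewards you with extra structural information, namely a well-ordered maximal chain of type $\omega_1$; the paper's route is strictly more elementary and in fact proves the stronger statement that every maximal chain of $\TD^+$ has cardinality exactly $\aleph_1$, not merely that some maximal chain realizes the minimum.
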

	\begin{proof}
		Since $\TD^+$ is $\sigma$-upward directed, we have that $\mmc(\TD^+)$ is uncountable.
		Moreover, since each downward cone of $\TD^+$ is countable, we have $\mmc(\TD^+) = \aleph_1$.
		
		Since there are $\frakc$ many minimal elements in $\TD^+$, we have $\mmac(\TD^+) \le \frakc$.
		Suppose that there is a maximal antichain $A$ of size less than $\frakc$ of $\TD^+$.
		Then $A{\downarrow} = \{ x \in \TD^+ : (\exists y \in A) (x \le y)\}$ has also size less than $\frakc$.
		Thus, we can take a minimal element that does not belong to $A{\downarrow}$. This contradicts maximality of $A$.
	\end{proof}
	
	Using the above fact, we prove the following proposition.
	
	\begin{prop}
		$\cp(\TD^+) = \frakc$ and $\icp(\TD^+) = \aleph_1$.
	\end{prop}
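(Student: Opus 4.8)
The plan is to prove, for each of the two invariants, a matching pair of upper and lower bounds, relying on the fact just established that $\mmac(\TD^+) = \frakc$ and $\mmc(\TD^+) = \aleph_1$ (and on the two structural facts extracted in its proof: that $\TD^+$ has $\frakc$ many minimal elements and that every downward cone is countable). For the upper bounds I would simply invoke the general inequalities $\cp(P) \le \mmac(P)$ and $\icp(P) \le \mmc(P)$ from the introduction, which immediately give $\cp(\TD^+) \le \frakc$ and $\icp(\TD^+) \le \aleph_1$. Before using $\icp$ one should check that it is defined, i.e.\ that $\cp(\TD^+) > 1$: given $a \in \TD^+$, only countably many minimal degrees lie below $a$, so one may choose a minimal degree $m$ with $m \not\leT a$; minimality of $m$ together with $a \ne 0$ forces $a \not\leT m$, so $a$ and $m$ are incomparable.

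For $\icp(\TD^+) \ge \aleph_1$ I would use $\frakb(P) \le \icp(P)$ and argue that $\frakb(\TD^+) \ge \aleph_1$. The point is that $\TD^+$ is $\sigma$-upward-directed: any countable family of degrees has an upper bound (the join of countably many reals) and is therefore bounded, so no countable family can be unbounded. Hence $\icp(\TD^+) \ge \frakb(\TD^+) \ge \aleph_1$, and with the upper bound this gives $\icp(\TD^+) = \aleph_1$.

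The main step is $\cp(\TD^+) \ge \frakc$. Let $F$ be a comparable family. The crucial observation is that a minimal degree $m$ is comparable to $q \in \TD^+$ only if $m \leT q$: if instead $q \leT m$, then minimality of $m$ and $q \ne 0$ force $q = m$, so $m \leT q$ in every case. Consequently each $q \in F$ is comparable to at most the minimal degrees lying in its downward cone $\{ x : x \leT q \}$, which is countable. Since every one of the $\frakc$ minimal degrees must be comparable to some member of $F$, these $\frakc$ degrees are covered by $\abs{F}$ countable sets, so $\frakc \le \abs{F} \cdot \aleph_0$ and therefore $\abs{F} \ge \frakc$. Together with the upper bound this yields $\cp(\TD^+) = \frakc$.

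The only real obstacle is this last lower bound; the other three inequalities are essentially immediate from the quoted fact and the general diagram. Once the two structural facts about $\TD^+$ (continuum-many minimal degrees, countable cones) are in hand — and both are already used in the preceding fact — the covering count is routine.
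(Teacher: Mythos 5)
Your proposal is correct and follows essentially the same route as the paper: the lower bound $\cp(\TD^+) \ge \frakc$ comes from the fact that a minimal degree comparable to $q$ must lie in the countable downward cone of $q$ (the paper phrases this by forming the coinitial family $\mathcal{A}' = \{A : A \leT A_\alpha \text{ for some } \alpha\}$, which must contain all $\frakc$ minimal degrees, while you run the equivalent covering count directly), and $\icp(\TD^+) = \aleph_1$ follows from $\frakb(\TD^+) \ge \aleph_1$ via $\sigma$-upward-directedness together with $\icp(\TD^+) \le \mmc(\TD^+) = \aleph_1$, exactly as in the paper. Your explicit check that $\icp(\TD^+)$ is defined (i.e.\ $\cp(\TD^+) > 1$) is a small point the paper leaves implicit, and is a welcome addition.
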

	\begin{proof}
		To show $\cp(\TD^+) = \frakc$, we fix a comparable family $\mathcal{A} = (A_\alpha : \alpha < \kappa)$.
		Put $\mathcal{A}' = \{ A : A \leT A_\alpha \text{ for some } \alpha \}$.
		Since every downward cone in $\mathcal{D}$ is countable, we have $\abs{\mathcal{A}'} = \kappa$.
		Fix $B \subset \omega$ arbitrarily.
		Then we can find $\alpha < \kappa$ such that $A_\alpha \leT B$ or $B \leT A_\alpha$. In either case, we have $(\exists A \in \mathcal{A}')(A \leT B)$.
		So $\mathcal{A'}$ satisfies $(\forall B)(\exists A \in \mathcal{A}')(A \leT B)$.
		So $\mathcal{A'}$ is a coinitial family.
		But in the poset of Turing degrees, there are continuum many minimal elements.
		So we have $\cp(\TD^+) \ge \frakc$.
		
		Since the poset of Turing degrees is $\sigma$-upward directed, we have $\icp(\TD^+) \ge \frakb(\TD^+) \ge \aleph_1$.
		
		By the previous fact, we have $\icp(\TD^+) \le \mmc(\TD^+) \le \aleph_1$.
	\end{proof}
	
	\section{The cardinal invariants of the Rudin--Keisler ordering}\label{sec:rudinkeisler}
	
	In this section, we will focus on the Rudin--Keisler ordering on the set of nonprincipal ultrafilters on $\omega$.
	
	For the definition and basic properties of Rudin--Keisler ordering, see \cite{halbeisen2012combinatorial}.
	
	\begin{prop}
		$\frakd(\beta \omega \setminus \omega, \leRK) = 2^\frakc$.
	\end{prop}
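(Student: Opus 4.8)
The plan is to prove the two inequalities $\frakd(\beta\omega\setminus\omega, \leRK) \le 2^\frakc$ and $\frakd(\beta\omega\setminus\omega, \leRK) \ge 2^\frakc$ separately, relying on a single structural observation: in the Rudin--Keisler ordering, the downward cone below any fixed ultrafilter is small. Throughout I use Posp\'i\v{s}il's theorem that $\abs{\beta\omega\setminus\omega} = 2^\frakc$.

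The upper bound is immediate. Since the whole poset is trivially a dominating family of itself, we get $\frakd(\beta\omega\setminus\omega, \leRK) \le \abs{\beta\omega\setminus\omega} = 2^\frakc$.

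For the lower bound, the key step is to bound the size of each RK-downward cone. Fix a nonprincipal ultrafilter $V$ and recall that $U \leRK V$ holds exactly when $U = f_*(V)$ for some $f \colon \omega \to \omega$, where $f_*(V) = \{ A \subset \omega : f^{-1}[A] \in V \}$. Hence the cone $\{ U : U \leRK V \}$ is precisely the image of $\omega^\omega$ under the map $f \mapsto f_*(V)$, and therefore has cardinality at most $\abs{\omega^\omega} = \frakc$. With this in hand, suppose $F \subset \beta\omega\setminus\omega$ satisfies $\abs{F} < 2^\frakc$. Then the union of cones $\bigcup_{V \in F} \{ U : U \leRK V \}$ has cardinality at most $\max(\abs{F}, \frakc)$, which is strictly less than $2^\frakc$ because both $\abs{F} < 2^\frakc$ and $\frakc < 2^\frakc$. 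Since $\abs{\beta\omega\setminus\omega} = 2^\frakc$, there is an ultrafilter $U$ lying outside this union, that is, $U$ is not RK-below any member of $F$. Thus $F$ is not a dominating family, and we conclude $\frakd(\beta\omega\setminus\omega, \leRK) \ge 2^\frakc$.

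I expect no genuine obstacle here: the two non-elementary inputs are Posp\'i\v{s}il's cardinality computation and the counting bound on RK-predecessors, and the remainder is a routine cardinal-arithmetic covering argument that does not even require $2^\frakc$ to be regular, since the max of two cardinals below $2^\frakc$ is automatically below $2^\frakc$.
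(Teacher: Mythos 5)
Your proof is correct and takes essentially the same approach as the paper: both rest on the observation that each RK-downward cone has size at most $\frakc$ (which you additionally justify via the surjection $f \mapsto f_*(V)$ from $\omega^\omega$, a detail the paper leaves implicit) together with Posp\'i\v{s}il's $\abs{\beta\omega\setminus\omega} = 2^\frakc$. The only difference is presentational: you argue the lower bound contrapositively by exhibiting an ultrafilter outside the union of cones, whereas the paper computes $2^\frakc \le \frakc \cdot \abs{D}$ directly for a given dominating family $D$.
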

	\begin{proof}
		Take a dominating family $D$ of $(\beta \omega \setminus \omega, \leRK)$.
		Then we have $\bigcup_{p \in D} p{\downarrow} = \beta \omega \setminus \omega$, where $p{\downarrow}$ is the downward cone below $p$, whose size is $\le \frakc$.
		So we have $2^\frakc \le \frakc \cdot \abs{D}$. Therefore we have $\abs{D} = 2^\frakc$.
	\end{proof}
	
	The next lemma is well-known.
	
	\begin{lem}\label{lem:boundingofrk}
		$\frakb(\beta \omega \setminus \omega, \leRK) \ge \frakc^+$.
	\end{lem}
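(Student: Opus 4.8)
The statement $\frakb(\beta\omega\setminus\omega,\leRK)\ge\frakc^+$ says that any family of nonprincipal ultrafilters of size $\le\frakc$ is bounded in the Rudin--Keisler ordering, i.e.\ there is a single ultrafilter $\leRK$-above all of them. The plan is to take an arbitrary family $\{p_\alpha:\alpha<\frakc\}$ of nonprincipal ultrafilters and produce an ultrafilter $q$ with $p_\alpha\leRK q$ for every $\alpha$. The natural device is the sum (Fubini product) of ultrafilters: recall that if $q$ is an ultrafilter on an index set and $\seq{r_i:i}$ are ultrafilters, then each $r_i$ is $\leRK$ the sum $\sum_q r_i$. So I would try to realize all the $p_\alpha$ simultaneously as RK-predecessors of one well-chosen sum.

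\textbf{Key steps.} First I would reduce to producing, for the given family, a single ultrafilter lying RK-above all of them. The cleanest route is to use a result (standard in this area) that the RK-ordering is $\frakc^+$-directed: any $\le\frakc$ many ultrafilters have a common upper bound. To build the bound concretely, index the family by $\frakc$ and form the Fubini-type sum over a nonprincipal ultrafilter on $\frakc$ (or iterate sums along a well-ordering of length $\frakc$), using that $\abs{\beta\omega\setminus\omega}=2^\frakc$ gives enough room. The point is that $\sum_q p_\alpha$ sits RK-above each summand via the projection maps, and a careful choice of the ``outer'' ultrafilter $q$ makes every $p_\alpha$ appear as a summand on a $q$-large set, hence $p_\alpha\leRK\sum_q p_\alpha$ for all $\alpha$ simultaneously. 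Since this common upper bound exists for every family of size $\le\frakc$, no such family can be unbounded, giving $\frakb\ge\frakc^+$.

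\textbf{Main obstacle.} The delicate part is arranging \emph{one} ultrafilter that dominates \emph{all} $\frakc$ many $p_\alpha$ at once, not just finitely many: a single Fubini sum $\sum_q r_i$ only directly places the summands $r_i$ below it, so I must encode all $\frakc$ of the $p_\alpha$ among the summands and verify that the relevant index sets are large in the chosen outer ultrafilter. Equivalently, the hard step is establishing the $\frakc^+$-directedness of $(\beta\omega\setminus\omega,\leRK)$, which is where the cardinal arithmetic $2^\frakc$ and the combinatorics of iterated sums genuinely enter. Once directedness is in hand, the bound $\frakb\ge\frakc^+$ is immediate, since a bounded family can never be unbounded in the sense of the definition.
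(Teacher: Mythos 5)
Your reduction of the lemma to $\frakc^+$-directedness of $(\beta\omega\setminus\omega,\leRK)$ is fine as a reformulation, but it is exactly the content of the lemma, so citing it as ``a standard result'' is circular; everything hinges on your concrete construction, and that construction has a genuine gap. The fact you ``recall'' --- that each summand $r_i$ satisfies $r_i \leRK \sum_q r_i$ --- is false in general. If $f \colon \omega\times\omega \to \omega$ witnesses an RK-predecessor of the sum, then writing $f_n = f(n,\cdot)$ one computes $f\bigl(\sum_q r_n\bigr) = \lim_q f_n(r_n)$; so the RK-predecessors obtained this way are precisely the outer ultrafilter's images and $q$-limits of sequences of images of the summands. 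The canonical predecessors are $q$ (first projection) and $\lim_q r_n$ (second projection), not the individual $r_n$: the fiber $\{n\}\times\omega$ is null in the sum when $q$ is nonprincipal, so there is no projection realizing $r_n \leRK \sum_q r_m$. Concretely, taking (say under $\mathsf{CH}$) pairwise RK-incomparable Ramsey ultrafilters $r_n$ and $q$ incomparable with all of them, no fixed $r_{n_0}$ arises as such a limit, so $r_{n_0} \not\leRK \sum_q r_n$. Your fallback of iterating sums along a well-ordering of length $\frakc$ fails at the very first countable limit stage for the same reason, and a sum indexed by $\frakc$ with outer ultrafilter on $\frakc$ lives on $\frakc\times\omega$, not on $\omega$, so it is not even an element of $\beta\omega\setminus\omega$; the remark that $\abs{\beta\omega\setminus\omega}=2^\frakc$ ``gives enough room'' does not address this.

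The missing idea, which is the paper's entire proof, is an independent family: fix $\frakc$ many independent functions $f_\alpha \colon \omega \to \omega$, observe that independence gives the strong finite intersection property for $\{ f_\alpha^{-1}(A) : \alpha < \frakc,\ A \in p_\alpha \}$, and extend this family to an ultrafilter $q$. Then $f_\alpha(q) \supseteq p_\alpha$, hence $f_\alpha(q) = p_\alpha$ by maximality, so $p_\alpha \leRK q$ for every $\alpha$ simultaneously. This replaces the Fubini-sum machinery entirely: the ``room'' needed is not cardinality of $\beta\omega$ but the existence of an independent family of size $\frakc$ on $\omega$ (Hausdorff/Fichtenholz--Kantorovich), which is what lets one single ultrafilter on $\omega$ project onto all $\frakc$ prescribed ultrafilters at once.
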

	\begin{proof}
		Let $(p_\alpha : \alpha < \frakc)$ be a sequence of elements in $\beta \omega \setminus \omega$.
		We have to show that there is an upper bound of these $p_\alpha$'s.
		Take an independent family $I = \{ f_\alpha : \alpha < \frakc \}$ of functions from $\omega$ into $\omega$ of size $\frakc$.
		By independence, the set
		\[
		\{ f_{\alpha}^{-1}(A) : \alpha < \frakc, A \in p_\alpha \}
		\]
		has the strong finite intersection property.
		So there is an ultrafilter $q$ that extends this set.
		This $q$ is above all $p_\alpha$'s.
	\end{proof}
	
	$\frakb(\beta \omega \setminus \omega, \geRK)$ depends on models of set theory.
	If Near Coherence of Filters (NCF) holds, then $\frakb(\beta \omega \setminus \omega, \geRK) > 2$, but otherwise $\frakb(\beta \omega \setminus \omega, \geRK) = 2$.
	
	\begin{prop}
		Assume there exist $2^\frakc$ many Ramsey ultrafilters.
		Then we have $\cp(\beta \omega \setminus \omega, \leRK) = 2^\frakc$.
	\end{prop}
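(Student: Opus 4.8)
The plan is to establish the two inequalities separately. The bound $\cp(\beta\omega\setminus\omega,\leRK)\le 2^\frakc$ is immediate, since the whole poset is always a comparable family and $\abs{\beta\omega\setminus\omega}=2^\frakc$. The content is therefore the lower bound $\cp(\beta\omega\setminus\omega,\leRK)\ge 2^\frakc$, and here I would exploit two classical facts about Ramsey (selective) ultrafilters. First, a Ramsey ultrafilter $p$ is $\leRK$-minimal among the nonprincipal ultrafilters: if $f\colon\omega\to\omega$, then $f$ is either constant or injective on a set in $p$, so for any nonprincipal $q\leRK p$ one has $q\equiv_{\mathrm{RK}}p$. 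Second, each $\leRK$-equivalence class has size at most $\frakc$ (it is covered by the images of $p$ under the $\frakc$-many permutations of $\omega$), and each downward cone $p{\downarrow}$ has size at most $\frakc$.

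Fix a set $R$ of $2^\frakc$-many Ramsey ultrafilters, as granted by the hypothesis. Suppose toward a contradiction that $\mathcal{F}$ is a comparable family with $\abs{\mathcal{F}}=\kappa<2^\frakc$. I would then count, for each fixed $q\in\mathcal{F}$, how many members of $R$ are $\leRK$-comparable with $q$. The Ramsey ultrafilters $p$ with $p\leRK q$ all lie in $q{\downarrow}$, so there are at most $\frakc$ of them. On the other hand, if $p\in R$ satisfies $q\leRK p$, then by minimality of $p$ together with nonprincipality of $q$ we get $q\equiv_{\mathrm{RK}}p$, so such $p$ lie in the single $\leRK$-equivalence class of $q$, of which there are at most $\frakc$. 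Hence at most $\frakc$ members of $R$ are comparable with any one $q$, and so at most $\kappa\cdot\frakc=\max(\kappa,\frakc)<2^\frakc$ members of $R$ are comparable with some element of $\mathcal{F}$.

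Since $\abs{R}=2^\frakc$, this leaves a Ramsey ultrafilter $p\in R$ that is incomparable with every member of $\mathcal{F}$, contradicting that $\mathcal{F}$ is a comparable family; this forces $\cp(\beta\omega\setminus\omega,\leRK)\ge 2^\frakc$ and completes the argument. The one point that needs care — and which I regard as the main thing to get right rather than a genuine obstacle — is the minimality step: one must invoke the characterization of selective ultrafilters to conclude that a nonprincipal $q\leRK p$ with $p$ Ramsey is in fact RK-equivalent to $p$, so that the Ramsey ultrafilters lying \emph{above} $q$ are confined to one equivalence class. Everything else is cardinal arithmetic, and the hypothesis of $2^\frakc$-many Ramsey ultrafilters is precisely what makes the comparable-family count $\max(\kappa,\frakc)$ fall short of the available supply.
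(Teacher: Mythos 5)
Your proof is correct and takes essentially the same approach as the paper, which likewise derives a contradiction by observing that every Ramsey ultrafilter must lie in the $\leRK$-downward closure of the putative comparable family, a set of size at most $\abs{\mathcal{F}}\cdot\frakc < 2^\frakc$. The only difference is presentational: you make explicit the RK-minimality of Ramsey ultrafilters (folding the case $q \leRK p$ into the equivalence class of $q$), a step the paper leaves implicit in asserting that the downward closure contains all Ramsey ultrafilters.
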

	\begin{proof}
		Take a comparable family $C \subset \beta \omega \setminus \omega$ of size less than $2^\frakc$.
		Set $C' = \{ p \in \beta \omega \setminus \omega : (\exists q \in C)(p \leRK q) \}$.
		Then $C'$ must contain all Ramsey ultrafilters.
		But the size of $C'$ is less than $2^\frakc$ because every downward cone is size $\le \frakc$.
		This contradicts our assumption.
	\end{proof}
	
	\begin{prop}
		In the Miller model over a model of $\mathsf{GCH}$, we have $\frakd(\beta \omega \setminus \omega, \geRK) \le \frakc$. In particular, $\cp(\beta \omega \setminus \omega, \leRK) \le \frakc$.
	\end{prop}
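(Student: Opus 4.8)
The plan is to reduce the whole statement to the principle of Near Coherence of Filters (NCF), which is the feature of the Miller model over a model of $\mathsf{GCH}$ that is relevant here (this consistency is essentially the Blass--Shelah/Blass result that the Miller model satisfies NCF). Recall that NCF asserts that any two nonprincipal ultrafilters $p, q$ are \emph{nearly coherent}, i.e.\ there is a finite-to-one map $h \colon \omega \to \omega$ with $h(p) = h(q)$. Since $\cp(\beta\omega\setminus\omega, \leRK) \le \frakd(\beta\omega\setminus\omega, \geRK)$ by the general inequality $\cp(P) \le \frakd(P^*)$ recorded in the introduction, the ``in particular'' clause will follow at once, and it suffices to exhibit a dominating family of $(\beta\omega\setminus\omega, \geRK)$ --- equivalently, a $\leRK$-coinitial family --- of cardinality at most $\frakc$.

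To build such a family I would fix a single nonprincipal ultrafilter $U$ and set
\[
D = \{\, h(U) : h \colon \omega \to \omega \text{ is finite-to-one} \,\}.
\]
Each $h(U)$ is again nonprincipal: for every $n$ the preimage $h^{-1}(\{n\})$ is finite, hence not in $U$, so $\{n\} \notin h(U)$; thus $D \subseteq \beta\omega\setminus\omega$. Moreover there are only $\frakc$ many functions $\omega \to \omega$, so $\abs{D} \le \frakc$. The point of pre-enumerating the finite-to-one images of the \emph{one} fixed ultrafilter $U$ is that NCF will force every ultrafilter to sit $\geRK$-above one of them.

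Next I would verify coinitiality. Given an arbitrary nonprincipal $p$, apply NCF to the pair $p, U$ to get a finite-to-one $h$ with $h(p) = h(U)$; put $r = h(U) \in D$. Because $r = h(p)$ is a push-forward of $p$ we have $r \leRK p$, i.e.\ $p \geRK r$, so $D$ is indeed $\leRK$-coinitial and $\frakd(\beta\omega\setminus\omega, \geRK) \le \frakc$; combined with the displayed inequality this gives $\cp(\beta\omega\setminus\omega, \leRK) \le \frakc$ as well. There is no computational obstacle here, since the argument is genuinely short once NCF is in hand; the only place needing care is the bookkeeping in the definition of near coherence, namely using that the common image $h(p)=h(U)$ is \emph{simultaneously} a push-forward of $p$ (so $\leRK p$) and a push-forward of the fixed $U$ (so a member of $D$). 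I would also remark that the Miller model enters only as a concrete model of NCF with $\frakc = \aleph_2$: the bound $\frakd(\beta\omega\setminus\omega, \geRK) \le \frakc$ in fact holds in every model of NCF.
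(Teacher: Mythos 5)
Your proof is correct, but it takes a genuinely different route from the paper's. The paper also reduces the statement to exhibiting a $\leRK$-coinitial family of size at most $\frakc$, but its family is the set of \emph{all P-points}: it uses two facts about the Miller model --- that NCF holds and that there are exactly $\frakc$ many P-points --- and the coinitiality argument goes through a P-point $q$, invoking NCF to get $r \leRK p, q$ and then the RK-downward closure of P-pointness to conclude $r$ is a P-point below $p$. You instead fix a single nonprincipal ultrafilter $U$ and take $D = \{ h(U) : h \text{ finite-to-one} \}$, so that near coherence of $p$ and $U$ directly hands you $h(U) = h(p) \leRK p$ with $h(U) \in D$, and $\abs{D} \le \frakc$ by counting functions. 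Your version is more elementary and strictly more general: it needs nothing about P-points or their number, so, as you note, it proves $\frakd(\beta\omega\setminus\omega, \geRK) \le \frakc$ in \emph{every} model of NCF, with the Miller model entering only as a witness to the consistency of NCF. What the paper's approach buys in exchange is structural information: the coinitial family can be taken to consist of P-points, which connects the bound to the known combinatorics of the Miller model (where the P-points form a set of size exactly $\frakc$), whereas your family $D$ is just an abstract set of pushforwards. Both arguments handle the ``in particular'' clause identically, via the general inequality $\cp(P) \le \frakd(P^*)$ from the introduction.
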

	\begin{proof}
		Note that in the model, NCF holds and there are exactly $\frakc$ many P-points.
		So the set of all P-points is a dominating family of size $\frakc$ of the poset $(\beta \omega \setminus \omega, \geRK)$.
		
		To show this, take an arbitrary ultrafilter $p$.
		And take a P-point $q$.
		By NCF, there is $r \leRK p, q$.
		Since the property being a P-point is downward closed, $r$ is also a P-point.
		So there is a P-point which is below $p$.
	\end{proof}
	
	\begin{prop}\label{prop:mmcofrk}
		$\mmc(\beta \omega \setminus \omega, \leRK) = \frakb(\beta \omega \setminus \omega, \leRK) = \frakc^+$.
	\end{prop}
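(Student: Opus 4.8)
The plan is to compute $\frakb$ and $\mmc$ separately, in both cases leaning on the single structural fact that every downward cone is small: if $q \leRK p$ then $q$ is the Rudin--Keisler image of $p$ under some $f \in \omega^\omega$, so $\abs{p{\downarrow}} \le \frakc$ for every $p$ (this is exactly the observation already used in the propositions above). Throughout I would also invoke Lemma \ref{lem:boundingofrk}, which already supplies $\frakb(\beta\omega \setminus \omega, \leRK) \ge \frakc^+$.

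First I would dispose of $\frakb$. The lower bound is Lemma \ref{lem:boundingofrk}. For the upper bound, note that any $F \subset \beta\omega \setminus \omega$ with $\abs{F} = \frakc^+$ is automatically unbounded: if some $p$ were an upper bound of $F$, then $F \subset p{\downarrow}$, contradicting $\abs{p{\downarrow}} \le \frakc < \frakc^+$. Since $\abs{\beta\omega \setminus \omega} = 2^\frakc \ge \frakc^+$, such an $F$ exists, so $\frakb \le \frakc^+$ and hence $\frakb = \frakc^+$.

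Next I would establish the upper bound $\mmc \le \frakc^+$, by showing that in fact \emph{every} chain $C$ has $\abs{C} \le \frakc^+$. For $c \in C$ the set $\{x \in C : x <_{\mathrm{RK}} c\}$ is contained in $c{\downarrow}$, hence has size $\le \frakc$; consequently $\cf(C) \le \frakc^+$ (an increasing cofinal sequence of length $\frakc^{++}$ would have a term with $\frakc^+$ predecessors in $C$). Writing $C$ as the union of the $\le \frakc^+$ initial segments determined by a cofinal sequence, each of size $\le \frakc$, yields $\abs{C} \le \frakc^+ \cdot \frakc = \frakc^+$. A maximal chain exists by Zorn's lemma, so $\mmc \le \frakc^+$.

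Finally, the lower bound $\mmc \ge \frakc^+$, which I expect to be the most delicate step. Let $C$ be a maximal chain and suppose toward a contradiction that $\abs{C} \le \frakc$. Since $\abs{C} < \frakc^+ \le \frakb$ by Lemma \ref{lem:boundingofrk}, the set $C$ has an upper bound $r$. Now I would use the standard fact that $(\beta\omega \setminus \omega, \leRK)$ has no maximal element --- for instance $p <_{\mathrm{RK}} p \otimes p$ for the Fubini product --- to produce $s$ with $s \geRK r$ and $s \notin C$: if $r \notin C$ take $s = r$, and otherwise $r = \max C$ and we take $s$ strictly above $r$. Then $s$ is comparable to every element of $C$ while $s \notin C$, so $C \cup \{s\}$ is a strictly larger chain, contradicting maximality. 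Hence $\abs{C} \ge \frakc^+$, so $\mmc = \frakc^+$. The main obstacle is this last paragraph together with the cofinality computation preceding it: one must correctly rule out the degenerate case in which the bounding element is already the top of the chain, which is precisely the point where the absence of $\leRK$-maximal ultrafilters is needed.
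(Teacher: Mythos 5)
Your proof is correct, and in one respect it is more complete than the paper's. The paper's entire argument is the one-line sandwich $\frakc^+ \le \frakb(\beta\omega\setminus\omega, \leRK) \le \mmc(\beta\omega\setminus\omega, \leRK) \le \frakc^+$: the right-hand bound is your cofinality computation (asserted in one sentence via the smallness of downward cones), the left-hand bound is Lemma \ref{lem:boundingofrk}, and the middle inequality $\frakb \le \mmc$ is used without comment. That middle inequality is exactly where your last paragraph does real work, and it cannot simply be quoted from Figure \ref{fig:mmcandmmac}: the general chain there is $\frakb \le \icp \le \mmc$, and whether $\icp(\beta\omega\setminus\omega, \leRK)$ is even defined is the longstanding open problem mentioned at the end of Section \ref{sec:rudinkeisler}. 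The inequality $\frakb(P) \le \mmc(P)$ holds precisely when every maximal chain is unbounded, i.e., when no maximal chain ends in a maximal element of $P$; your argument --- a chain of size $\le \frakc$ is bounded by Lemma \ref{lem:boundingofrk}, and no ultrafilter is $\leRK$-maximal (your Fubini-square fact is correct, though you can get this more cheaply: pick $q \not\leRK p$ using $\abs*{p{\downarrow}} \le \frakc < 2^\frakc$ and bound $\{p, q\}$ by the lemma, noting any such bound is strictly above $p$) --- is precisely the justification the paper leaves implicit, including the degenerate case $r = \max C$ that you rightly single out. Your direct proof of $\frakb \le \frakc^+$ (any family of size $\frakc^+$ is unbounded since cones have size $\le \frakc$) is also a mild simplification, as the paper obtains that upper bound only indirectly through $\mmc$. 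In sum: same two structural ingredients as the paper (small downward cones and Lemma \ref{lem:boundingofrk}), a slightly different decomposition into separate computations of $\frakb$ and $\mmc$, and a correctly identified and repaired gap at the step $\frakb \le \mmc$.
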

	\begin{proof}
		Take a maximal chain $C$ of $\beta \omega \setminus \omega$.
		The size of $C$ is less than or equal to $\frakc^+$ since each downward cone has size $\le \frakc$.
		Therefore we have $\mmc(\beta \omega \setminus \omega, \leRK) \le \frakc^+$.
		
		So combining this fact and Lemma \ref{lem:boundingofrk}, we have
		\[
		\frakc^+ \le \frakb(\beta \omega \setminus \omega, \leRK) \le \mmc(\beta \omega \setminus \omega, \leRK) \le \frakc^+. \qedhere
		\]
	\end{proof}
	
	\begin{prop}
		If $\icp(\beta \omega \setminus \omega, \leRK)$ is defined, then $\icp(\beta \omega \setminus \omega, \leRK) = \frakc^+$.
	\end{prop}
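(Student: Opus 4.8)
The plan is to squeeze $\icp(\beta\omega\setminus\omega,\leRK)$ between two quantities that Proposition~\ref{prop:mmcofrk} has already pinned down at $\frakc^+$. Since we are assuming $\icp(\beta\omega\setminus\omega,\leRK)$ is defined, the two general inequalities recorded in the introduction apply: an incomparable family is in particular unbounded (for each $p$ some member $q$ satisfies $q\not\le p$), so $\frakb(P)\le\icp(P)$; and, when $\icp$ is defined, one also has $\icp(P)\le\mmc(P)$. Thus the whole argument reduces to inserting $\icp$ into a chain of inequalities whose two ends were computed in Proposition~\ref{prop:mmcofrk}.

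For the lower bound I would invoke $\frakb(\beta\omega\setminus\omega,\leRK)\le\icp(\beta\omega\setminus\omega,\leRK)$ together with the equality $\frakb(\beta\omega\setminus\omega,\leRK)=\frakc^+$ from Proposition~\ref{prop:mmcofrk} (which itself rests on Lemma~\ref{lem:boundingofrk}, the independent-family construction producing a common upper bound for any $\frakc$ ultrafilters). For the upper bound I would invoke $\icp(\beta\omega\setminus\omega,\leRK)\le\mmc(\beta\omega\setminus\omega,\leRK)$ together with $\mmc(\beta\omega\setminus\omega,\leRK)=\frakc^+$, again from Proposition~\ref{prop:mmcofrk} (whose upper estimate comes from the fact that every downward $\leRK$-cone has size at most $\frakc$, so no maximal chain is longer than $\frakc^+$). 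Combining these gives
\[
\frakc^+ = \frakb(\beta\omega\setminus\omega,\leRK) \le \icp(\beta\omega\setminus\omega,\leRK) \le \mmc(\beta\omega\setminus\omega,\leRK) = \frakc^+,
\]
whence $\icp(\beta\omega\setminus\omega,\leRK)=\frakc^+$.

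There is essentially no hard step left, since Proposition~\ref{prop:mmcofrk} carries all the combinatorial weight; the only point demanding a little care is the step $\icp\le\mmc$, which is exactly where the hypothesis that $\icp$ is defined is used. If one prefers a self-contained verification of the upper bound rather than citing the general inequality, I would exhibit an incomparable family of size $\frakc^+$ directly: fix a maximal chain $C$ with $\abs{C}\le\frakc^+$, observe that every ultrafilter lying off $C$ has an incomparable witness in $C$ by maximality, and for each of the $\frakc^+$ members $p$ of $C$ adjoin one ultrafilter incomparable to $p$ (such a partner exists precisely because $\icp$ is defined). The resulting family still has size $\frakc^+$ and is incomparable, which makes transparent both why $\icp$ must be assumed defined and why the finite pathologies that could otherwise force $\icp>\mmc$ cannot occur in this infinite setting.
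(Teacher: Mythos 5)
Your proof is correct and takes essentially the same route as the paper, whose entire proof is ``This follows from Proposition~\ref{prop:mmcofrk}'': both arguments squeeze $\icp(\beta\omega\setminus\omega,\leRK)$ between $\frakb(\beta\omega\setminus\omega,\leRK)$ and $\mmc(\beta\omega\setminus\omega,\leRK)$, each equal to $\frakc^+$ by that proposition, via the general inequalities $\frakb(P)\le\icp(P)\le\mmc(P)$ from the introduction. Your closing remark correctly identifies the one delicate point --- that the step $\icp\le\mmc$ needs $\icp$ to be defined, since a maximal chain alone is not an incomparable family for its own members and partners must be adjoined --- which the paper leaves implicit.
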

	\begin{proof}
		This follows from Proposition \ref{prop:mmcofrk}.
	\end{proof}
	
	It is a longstanding problem for $\ZFC$ to prove that for every $p \in \beta \omega \setminus \omega$ there is $q \in \beta \omega \setminus \omega$ such that $p$ and $q$ are incomparable. In other words, we don't know that $\ZFC$ proves $\cp(\beta \omega \setminus \omega) > 1$.
	
	\section{The cardinal invariants of ideals on $\omega$}\label{sec:idealsonomega}
	
	In this section, we consider the comparability numbers and incomparability numbers of the ideals on $\omega$.
	
	For an ideal $I$ on $\omega$, recall that the additivity of $\I$, $\add^*(\I)$ is defined to be the minimal cardinality of $\mathcal{A} \subset \I$ such that for every $B \in \I$ there is $A \in \mathcal{A}$ such that $A \not \subset^* B$.
	
	\begin{prop}\label{prop:icpofideals}
		Let $\I$ be an ideal on $\omega$ that satisfies $\fin \subset \I$.
		Then we have $\icp(\I \setminus \fin, \subset^*) = \add^*(\I)$.
	\end{prop}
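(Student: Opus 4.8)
The plan is to prove the two inequalities separately. For $\add^*(\I) \le \icp(\I \setminus \fin, \subset^*)$, I would first note that $\add^*(\I)$ is nothing but the bounding number $\frakb(\I \setminus \fin, \subset^*)$. Indeed, a witnessing family for $\add^*(\I)$ may be assumed to consist of infinite sets, since a finite $A$ satisfies $A \subset^* B$ for every $B$ and hence is useless; and once all members are infinite, the defining condition ``for every $B \in \I$ there is $A$ with $A \not\subset^* B$'' is equivalent to the same condition restricted to infinite $B$, because any infinite member works against every finite $B$. Thus witnessing families for $\add^*(\I)$ are exactly unbounded families of $(\I \setminus \fin, \subset^*)$, and the desired inequality is the instance $\frakb(P) \le \icp(P)$ of the general fact recorded in the introduction.

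For the reverse inequality $\icp(\I \setminus \fin, \subset^*) \le \add^*(\I)$, I would construct an incomparable family of size $\kappa := \add^*(\I)$ starting from an unbounded family $\{A_\alpha : \alpha < \kappa\} \subset \I \setminus \fin$ of infinite sets. The tempting move, imitating the proof of Theorem \ref{thm:icpnull}, is to disjointify and keep the infinite pieces. This is exactly where the main obstacle lies: under $\subset^*$ the disjointified infinite pieces need not form an unbounded family, because the discarded finite pieces can accumulate and $\subset^*$ does not respect infinite unions (each piece can be almost contained in some $C$ while the whole union is not).

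The fix I would use is to \emph{split} rather than disjointify. For each $\alpha$, write $A_\alpha = A_\alpha^0 \sqcup A_\alpha^1$ as a disjoint union of two infinite sets; both lie in $\I$ as subsets of $A_\alpha$. Put $\mathcal{F} = \{ A_\alpha^0, A_\alpha^1 : \alpha < \kappa \}$, a family of infinite sets of size at most $\kappa$ (and $\kappa$ is infinite, since no finite family can witness $\add^*(\I)$: the union of such a family would bound it). The point of splitting into exactly two pieces is that $A_\alpha$ is then a \emph{finite} union of its halves, and finite unions are compatible with $\subset^*$.

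Two verifications remain. First, $\mathcal{F}$ is unbounded: if some $C \in \I$ had $F \subset^* C$ for every $F \in \mathcal{F}$, then $A_\alpha = A_\alpha^0 \cup A_\alpha^1 \subset^* C$ for every $\alpha$, contradicting the unboundedness of $\{A_\alpha\}$. Second, $\mathcal{F}$ is an incomparable family: given $C \in \I \setminus \fin$, unboundedness yields some $F_0 \in \mathcal{F}$ with $F_0 \not\subset^* C$. If also $C \not\subset^* F_0$, then $F_0$ is already incomparable with $C$; and if $C \subset^* F_0$, then the sibling $F_1$ of $F_0$ (the other half of the same $A_\alpha$) is disjoint from $F_0$, so $C \cap F_1$ is finite, whence $F_1$ is almost disjoint from $C$ and therefore incomparable with it. Either way $\mathcal{F}$ contains an element incomparable with $C$, so $\icp(\I \setminus \fin, \subset^*) \le \abs{\mathcal{F}} \le \kappa$. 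The only genuinely new idea over Theorem \ref{thm:icpnull} is this replacement of disjointification by two-piece splitting, which simultaneously preserves unboundedness and supplies an almost-disjoint partner for each member.
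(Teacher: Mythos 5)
Your proof is correct, and its key construction is genuinely different from (and simpler than) the paper's. Both proofs share the same skeleton: the easy direction is that an incomparable family of $(\I \setminus \fin, \subset^*)$ witnesses $\add^*(\I)$ (your route via $\add^*(\I) = \frakb(\I \setminus \fin, \subset^*)$ and the general inequality $\frakb(P) \le \icp(P)$ makes explicit what the paper leaves implicit), and for the hard direction both turn a witnessing family $\seq{A_\alpha : \alpha < \add^*(\I)}$ into an unbounded family in which every member has an almost disjoint partner, finishing identically: given $C$, pick $F$ with $F \not\subset^* C$, and if $C \subset^* F$ switch to the disjoint partner of $F$. The difference is how the partners are arranged. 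The paper runs a transfinite recursion, choosing indices $\alpha_i$ and setting $B_i = A_{\alpha_i} \setminus A_{\alpha_{i-1}}$ at successor steps so that consecutive $B_i$, $B_{i+1}$ are disjoint; preserving unboundedness then needs a telescoping argument ($A_\alpha \subset^* A_{\alpha_{i-1}} \subset B_j \cup \dots \cup B_{i-1} \subset^* C$ for a suitable successor $i$), together with bookkeeping at limit stages. Your two-piece splitting $A_\alpha = A_\alpha^0 \sqcup A_\alpha^1$ achieves both goals at once with one-line verifications: unboundedness survives because each $A_\alpha$ is a \emph{finite} union of members of the new family (you correctly identify that $\subset^*$ fails to respect infinite unions, which is exactly the obstacle the paper's recursion is built to circumvent), and each member has a disjoint sibling by construction. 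You also attend to two small points the paper glosses over: that witnessing families may be pruned to consist of infinite sets, and that $\add^*(\I)$ is infinite, so doubling the family does not increase its cardinality. What the paper's recursion buys in exchange is essentially nothing for this statement; your argument is a clean simplification.
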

	\begin{proof}
		Let $\kappa = \add^*(\I)$ and let $\seq{A_\alpha : \alpha < \kappa}$ be a sequence of infinite $\I$-small sets such that
		\[
		\neg (\exists C \in \I)(\forall \alpha < \kappa) (A_\alpha \subset^* C).
		\]
		We construct a sequence $\seq{B_i : i < \kappa}$ of infinite $\I$-small sets such that
		\[
		B_i \cap B_{i+1} = \emptyset \text{ for every $i < \kappa$ and} \tag{$*$}
		\]
		\[
		\neg (\exists C \in \I)(\forall i < \kappa) (B_i \subset^* C). \tag{$**$}
		\]
		
		We claim that we can take such a sequence.
		We will construct not only $\seq{B_i : i < \kappa}$ but also $\seq{\alpha_i : i < \kappa}$.
		Assume we have constructed  $B_j$ and $\alpha_j$ for $j < i$.
		
		If $i = 0$, then put $\alpha_0 = 0$ and $B_0 = A_0$.
		If $i$ is limit, then put $\alpha_i = \sup_{j < i} \alpha_{j}$ and $B_i = A_{\alpha_i}$.
		
		Suppose $i$ is a successor ordinal.
		Find the minimum index $\beta$ such that $\neg (A_{\beta} \subset^* A_{\alpha_{i-1}})$ holds.
		And we put $\alpha_i = \gamma$ and $B_i = A_\gamma \setminus A_{\alpha_{i-1}}$.
		
		Then ($*$) is easily implied from the construction.
		We have to show ($**$).
		Suppose that $(\exists C \in \I)(\forall \alpha < \kappa) (B_\alpha \subset^* C)$ holds.
		Take $\alpha < \kappa$ arbitrarily.
		Take the minimum $i < \kappa$ such that $\alpha < \alpha_i$.
		This $i$ must be a successor ordinal. Write $i$ as $i = j + n$ where $j$ is a limit ordinal and $n \ge 1$ is a natural number.
		By the construction, we have $A_\alpha \subset^* A_{\alpha_{i-1}}$.
		
		Then we have
		\[
		A_\alpha \subset^* A_{\alpha_{i-1}} \subset B_{j} \cup B_{j+1} \cup \dots \cup B_{j+n} \subset^* C.
		\]
		Since $\alpha$ was chosen arbitrarily, this contradicts the choice of the sequence $\seq{A_\alpha : \alpha < \kappa}$.
		
		We claim that $\{ B_i : i < \kappa \}$ is an incomparable family.
		
		Take an element $C \in \I \setminus \fin$. Then by ($**$), we can find $i < \kappa$ such that $\neg (B_i \subset^* C)$.
		For this $i$, if we also have $\neg (C \subset^* B_i)$, then we are done.
		If $C \subset^* B_i$, then $C$ and $B_{i+1}$ are almost disjoint, in particular, they are incomparable.
	\end{proof}
	
	\section{Weakly $\omega_1$-dense ideals on $\omega_1$}\label{sec:idealonomega1}
	
	In Section \ref{sec:boolean}, we defined $\WD(B)$ for a Boolean algebra $B$ and showed $\WD(B) = \cp(B \setminus \{0, 1\})$ for an atomless Boolean algebra $B$.
	
	An ideal $\I$ on $\omega_1$ is said to be $\omega_1$ dense if the density of the Boolean algebra $\Pow(\omega_1)/\I$ is $\omega_1$.
	Let us define that an ideal $\I$ on $\omega_1$ is weakly $\omega_1$-dense when $\WD(\Pow(\omega_1)/\I) = \omega_1$ holds. 
	
	It is known that the consistency strength of the existence of an $\omega_1$-dense ideal on $\omega_1$ is $\omega$ many Woodin cardinals.
	So it is natural to ask what is the consistency strength of the existence of a weakly $\omega_1$-dense ideal on $\omega_1$.
	In this section, we answer this question.
	
	\begin{fact}[{{\cite[Theorem 3.1]{bhm1973}}}]\label{fact:bhm}
		Let $I$ be a normal ideal on $\omega_1$.
		Suppose that $\I \restrict A$ is not $\omega_1$ dense for every $A \in \I^+$.
		Then for every sequence $\seq{S_\alpha : \alpha < \omega_1}$ of $\I$-positive sets, there is a pairwise disjoint sequence $\seq{A_\alpha : \alpha < \omega_1}$ of $\I$-positive sets such that $A_\alpha \subset S_\alpha$ for every $\alpha < \omega_1$.
	\end{fact}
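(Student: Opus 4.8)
The plan is to read the hypothesis as a splitting/anti-density statement and then run a transfinite recursion of length $\omega_1$, using normality (i.e.\ $\sigma$-completeness together with the pressing-down lemma) to coordinate the choices globally. First I would unpack ``$\I \restrict A$ is not $\omega_1$-dense for every $A \in \I^+$'' into a usable form: below every positive set no family of at most $\omega_1$ positive sets is dense in $\Pow(\omega_1)/\I$, i.e.\ every positive $A$ has positive subsets avoiding any prescribed $\omega_1$-list of positive sets. A first consequence I would record is that every positive set splits into $\omega_1$ pairwise disjoint positive pieces: since $\I$ is normal it is atomless and $\omega_1$-complete below each positive set, and density exceeding $\omega_1$ forbids any countable maximal antichain from being dense, so iterated splitting never stabilizes before $\omega_1$.

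The construction itself proceeds by recursion on $\alpha < \omega_1$, choosing positive $A_\alpha \subseteq S_\alpha$ with $A_\alpha \cap A_\beta = \emptyset$ for $\beta < \alpha$. At stage $\alpha$ the set to beat is $U_\alpha = \bigcup_{\beta < \alpha} A_\beta$; I need $S_\alpha \setminus U_\alpha \in \I^+$ so that a positive $A_\alpha$ can be extracted (and then thinned to keep room for the future). Here $\sigma$-completeness is what makes a single stage work once positivity is secured: if I have arranged that some positive $B \subseteq S_\alpha$ meets each earlier $A_\beta$ in an $\I$-small set, then, as only countably many $A_\beta$ occur and $\I$ is $\sigma$-complete, $B \setminus U_\alpha = B \setminus \bigcup_{\beta < \alpha}(A_\beta \cap B)$ is positive, and I set $A_\alpha \subseteq B$.

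\textbf{The main obstacle} is precisely securing that positivity globally: a countable union of positive sets can exhaust all of $\omega_1$ (e.g.\ a partition of $\omega_1$ into countably many positive pieces), so a naive greedy choice of the $A_\beta$ can cover a later target $S_\alpha$ modulo $\I$ and leave nothing. Overcoming this cannot be done locally; the choices must be coordinated from the start. My plan is to carry along, by recursion, a scheme that keeps each $A_\beta$ ``deep'' enough — concretely, to thin $A_\beta$ inside the $\omega_1$-fold splitting of $S_\beta$ and to use the pressing-down lemma to assign the pieces along a rank/tree so that for every index $\gamma$ the family $\{A_\beta : \beta < \gamma\}$ is nowhere dense below $S_\gamma$; the anti-density hypothesis is exactly what permits this dodging at every step. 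This rank-and-coordinate step, which I take to be the heart of the Baumgartner--Hajnal--M\'at\'e argument, is where normality is used at full strength, and it is the part I expect to require the most care; the rest is then assembled by the $\sigma$-complete bookkeeping described above. The prototype to keep in mind is Solovay's splitting of a stationary set into $\omega_1$ disjoint stationary sets, which is the case $\I = \mathrm{NS}_{\omega_1}$ with all $S_\alpha = \omega_1$.
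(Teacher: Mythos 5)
You should first note the ground rules of the comparison: the paper does not prove this statement at all --- it is imported as a Fact, cited to \cite[Theorem 3.1]{bhm1973} --- so the benchmark is the Baumgartner--Hajnal--M\'at\'e argument itself, which your proposal explicitly declines to reconstruct. Your framing is largely sound: the unpacking of ``nowhere $\omega_1$-dense'' is correct and does yield the useful lemma that for every $A \in \I^+$ and every $\omega_1$-list $\seq{T_\xi : \xi < \omega_1}$ of positive sets there is a positive $B \subset A$ with $T_\xi \setminus B \in \I^+$ for all $\xi$ (if not, the sets $T_\xi \cap A$ would form a dense family of size $\le \omega_1$ below $A$); this lemma, plus $\sigma$-completeness, pushes the recursion through successor stages exactly as you describe. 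You also correctly locate the obstruction at limit stages: $S_\gamma \setminus \bigcup_{\beta<\gamma} A_\beta$ is a countable \emph{decreasing intersection} of positive sets, about which $\sigma$-completeness says nothing, and a greedy choice can indeed exhaust a later target mod $\I$.

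The genuine gap is that the step you yourself call the heart of the argument is never carried out: ``thin $A_\beta$ inside the $\omega_1$-fold splitting of $S_\beta$ and use the pressing-down lemma to assign the pieces along a rank/tree so that $\{A_\beta : \beta < \gamma\}$ is nowhere dense below $S_\gamma$'' names a desired invariant, not a construction --- no regressive function is specified, no tree is defined, and maintaining that invariant through limit $\gamma$ is precisely the original problem restated: at a limit you would need a positive $B \subset S_\gamma$ almost disjoint from all earlier $A_\beta$, which the invariant presupposes rather than produces. Note also that pieces of the splittings of different $S_\beta$'s are not disjoint across $\beta$, which is the whole difficulty your ``assignment'' scheme would have to resolve. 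Two smaller defects reinforce this. First, your auxiliary claim that every positive set splits into $\omega_1$ pairwise disjoint positive sets is true, but your justification (``iterated splitting never stabilizes before $\omega_1$'') founders on the same limit-stage issue; the correct proof is via an Ulam matrix and needs neither normality at full strength nor any density hypothesis. Second, this shows your guiding prototype --- Solovay splitting, i.e.\ all $S_\alpha = \omega_1$ --- is the easy uniform case, and it offers no guidance on the actual content of the theorem, namely coordinating refinements of $\omega_1$ distinct, adversarially chosen $S_\alpha$'s, which is exactly where the nowhere-$\omega_1$-density hypothesis must do real work. As it stands, the proposal reduces the Fact to an unproved combinatorial core and does not constitute a proof.
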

	
	\begin{thm}
		Let $\I$ be a normal, weakly $\omega_1$-dense ideal on $\omega_1$.
		Then $\I \restrict A$ is $\omega_1$-dense for some $A \in \I^+$.
	\end{thm}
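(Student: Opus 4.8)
The plan is to argue by contradiction: I assume that $\I \restrict A$ is \emph{not} $\omega_1$-dense for every $A \in \I^+$, and I derive that $\WD(\Pow(\omega_1)/\I) > \omega_1$, which contradicts the hypothesis that $\I$ is weakly $\omega_1$-dense. Under this assumption the hypotheses of Fact \ref{fact:bhm} are satisfied, so every $\omega_1$-indexed sequence of $\I$-positive sets admits a pairwise disjoint $\I$-positive refinement. The whole point is to turn this disjoint-refinement property into a witness that \emph{no} family of $\omega_1$ many nonzero elements of $\Pow(\omega_1)/\I$ can be weakly dense.

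To that end I fix an arbitrary family $D = \{ [d_\xi] : \xi < \omega_1 \}$ of nonzero elements of $\Pow(\omega_1)/\I$, where each $d_\xi \subset \omega_1$ is $\I$-positive, and I construct a single $\I$-positive set $b$ witnessing that $D$ is not weakly dense. The key trick is to \emph{duplicate} the sequence before applying Fact \ref{fact:bhm}: via a bijection $\omega_1 \cong \omega_1 \times 2$, apply the fact to the sequence $\seq{S_{\xi,i} : \xi < \omega_1, i < 2}$ with $S_{\xi,0} = S_{\xi,1} = d_\xi$. This produces a pairwise disjoint family $\seq{A_{\xi,i} : \xi < \omega_1, i < 2}$ of $\I$-positive sets with $A_{\xi,i} \subset d_\xi$. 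Thus each $d_\xi$ now contains two disjoint $\I$-positive pieces $A_{\xi,0}$ and $A_{\xi,1}$, and, crucially, all of these pieces are pairwise disjoint across the entire family.

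Now set $b = \bigcup_{\xi < \omega_1} A_{\xi,0}$, which is $\I$-positive since $A_{0,0} \subset b$. For each $\xi$ I claim that neither $[d_\xi] \le [b]$ nor $[d_\xi] \le [b]^\comp$ holds. On one hand $A_{\xi,0} \subset d_\xi \cap b$, so $d_\xi \cap b$ is $\I$-positive and hence $[d_\xi] \not\le [b]^\comp$. On the other hand $A_{\xi,1}$ is disjoint from every $A_{\eta,0}$, hence disjoint from $b$, so $A_{\xi,1} \subset d_\xi \setminus b$; thus $d_\xi \setminus b$ is $\I$-positive and $[d_\xi] \not\le [b]$. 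Consequently $[b]$ is a nonzero element at which $D$ fails the weak-density condition. Since $D$ was an arbitrary family of size $\omega_1$, this shows $\WD(\Pow(\omega_1)/\I) > \omega_1$, the desired contradiction.

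The only genuinely nontrivial step is the second paragraph: recognizing that one should feed the \emph{doubled} sequence into Fact \ref{fact:bhm}, so that a single refinement simultaneously splits each $d_\xi$ into two $\I$-positive halves while keeping all halves globally disjoint. Once that is in place, everything reduces to the direct verification in the last paragraph, and normality of $\I$ enters only through the invocation of Fact \ref{fact:bhm}.
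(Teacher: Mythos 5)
Your proof is correct, and its skeleton is the same as the paper's: assume $\I \restrict A$ is not $\omega_1$-dense for every $A \in \I^+$, invoke Fact \ref{fact:bhm} to obtain a pairwise disjoint $\I$-positive refinement, and assemble from it a single positive set $B$ witnessing that an arbitrary family of $\omega_1$ many nonzero elements of $\Pow(\omega_1)/\I$ fails to be weakly dense. The one step you handle differently is how each $d_\xi$ acquires \emph{two} disjoint positive pieces. The paper applies Fact \ref{fact:bhm} once to the sequence $\seq{S_\alpha : \alpha < \omega_1}$ itself and then splits each resulting $A_\alpha$ into two $\I$-positive halves $B_\alpha, C_\alpha$, citing the fact that there is no $\sigma$-complete ultrafilter on $\omega_1$ (normality of $\I$ gives $\sigma$-completeness, so an unsplittable positive set would yield such an ultrafilter); the witness is then $B = \bigcup_\alpha B_\alpha$, with $C_\alpha \subset S_\alpha \setminus B$ doing the job your $A_{\xi,1}$ does. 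You instead feed the doubled sequence $S_{\xi,0} = S_{\xi,1} = d_\xi$, re-indexed by a bijection $\omega_1 \cong \omega_1 \times 2$, into Fact \ref{fact:bhm}; this is legitimate because the fact imposes no injectivity requirement on the sequence, and the disjointness of $A_{\xi,0}$ and $A_{\xi,1}$ then comes for free from the global disjointness of the refinement. What your variant buys is a small economy: the auxiliary splitting fact disappears entirely and, as you note, normality is used only through Fact \ref{fact:bhm}. What the paper's route buys is that it applies the fact in its most direct form and leans on a completely standard splitting lemma. The final verification (positivity of $d_\xi \cap B$ and of $d_\xi \setminus B$, hence $[d_\xi] \not\le [B]$ and $[d_\xi] \not\le [B]^\comp$) is identical in substance in both proofs, and your concluding step from ``no family of size $\omega_1$ is weakly dense'' to $\WD(\Pow(\omega_1)/\I) > \omega_1$ is sound, since enumerations with repetitions also cover families of size less than $\omega_1$.
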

	\begin{proof}
		Suppose that $\I \restrict A$ is not $\omega_1$ dense for every $A \in \I^+$.
		Let $\seq{S_\alpha : \alpha < \omega_1}$ be a sequence of $\I$-positive sets.
		Let us show that this family is not a weakly dense set.
		So we shall find $B \in \I^+$ such that $S_\alpha \not \subset_I B$ and $S_\alpha \not \subset_\I \omega_1 \setminus B$ for every $\alpha < \omega_1$.
		
		By Fact \ref{fact:bhm}, we can find a pairwise disjoint sequence $\seq{A_\alpha : \alpha < \omega_1}$ of $\I$-positive sets such that $A_\alpha \subset S_\alpha$ for every $\alpha < \omega_1$.
		Then we split each $A_\alpha$ into two positive sets $B_\alpha$, $C_\alpha$. This can be done using the fact that there is no $\sigma$-complete ultrafilter on $\omega_1$.
		Let $B$ be the union of $B_\alpha$'s.
		This $B$ is as required.
	\end{proof}
	
	\begin{cor}
		The consistency strength of the existence of a normal, weakly $\omega_1$-dense ideal on $\omega_1$ is also $\omega$ many Woodin cardinals. \qed
	\end{cor}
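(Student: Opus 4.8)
The statement is an equiconsistency, so my plan is to prove both inequalities of consistency strength, taking as a black box the quoted result that the existence of an $\omega_1$-dense ideal on $\omega_1$ is equiconsistent with the existence of $\omega$ Woodin cardinals. It then suffices to show, over $\ZFC$, that the existence of a normal weakly $\omega_1$-dense ideal and the existence of a normal $\omega_1$-dense ideal imply one another's \emph{consistency}.

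For the lower bound (a normal weakly $\omega_1$-dense ideal needs $\omega$ Woodins), I would argue directly from the preceding Theorem. If $\I$ is a normal weakly $\omega_1$-dense ideal on $\omega_1$, the Theorem yields $A \in \I^+$ with $\I \restrict A$ being $\omega_1$-dense. Since $\I$ is normal it extends the nonstationary ideal, so $A$ is stationary and in particular $\abs{A} = \omega_1$; transporting along a bijection $A \cong \omega_1$, the restriction $\I \restrict A$ is a normal $\omega_1$-dense ideal on $\omega_1$. Hence the mere existence of a normal weakly $\omega_1$-dense ideal produces an $\omega_1$-dense ideal in $\ZFC$, and the quoted lower bound for $\omega_1$-dense ideals transfers verbatim. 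This is where the Theorem does its work, and it is essentially immediate.

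For the upper bound I would start in a model, obtained from $\omega$ Woodins, in which some normal ideal $\I$ (e.g. the nonstationary ideal) is $\omega_1$-dense, and show such an ideal is automatically weakly $\omega_1$-dense, i.e. $\WD(\Pow(\omega_1)/\I) = \omega_1$. The inequality $\WD(\Pow(\omega_1)/\I) \le \omega_1$ is free, since every dense set is weakly dense, so $\WD(B)$ is at most the density of $B$. The content is the reverse inequality, equivalently that no countable family is weakly dense. Given countably many $\I$-positive sets $\seq{D_n : n < \omega}$, I would construct a single $S$ simultaneously splitting them, meaning $D_n \cap S$ and $D_n \setminus S$ are $\I$-positive for every $n$; such an $S$ witnesses that $\{D_n\}$ is not weakly dense. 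First note that $\Pow(\omega_1)/\I$ is atomless: an atom would give a $\sigma$-complete ultrafilter on $\omega_1$, which does not exist (the same fact used in the Theorem's proof). Using atomlessness together with the $\sigma$-completeness of the normal ideal, pass to a pairwise disjoint refinement $\seq{E_n : n < \omega}$ with each $E_n \subseteq D_n$ positive, split each $E_n = P_n \sqcup Q_n$ into two positive pieces, and set $S = \bigcup_n P_n$; disjointness of the $E_n$ gives $Q_n \cap S = \emptyset$, so $P_n \subseteq D_n \cap S$ and $Q_n \subseteq D_n \setminus S$ are both positive, as required.

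The main obstacle is precisely this disjoint-refinement step: producing pairwise disjoint positive $E_n \subseteq D_n$ out of countably many positive sets in $\Pow(\omega_1)/\I$. A naive stage-by-stage recursion can exhaust a later $D_n$, so rather than rebuild it by hand I would invoke the standard disjoint-refinement lemma for atomless $\sigma$-complete Boolean algebras and verify its applicability to this quotient. The only remaining points to check are routine: that restrictions of normal ideals are normal and that $\abs{A} = \omega_1$, so that the quoted equiconsistency for $\omega_1$-dense ideals genuinely applies to $\I \restrict A$. Everything else is bookkeeping.
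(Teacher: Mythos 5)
Your overall architecture is the intended one: the paper's corollary is stated with an immediate \qed, the lower bound being exactly your first paragraph (the Theorem turns a normal weakly $\omega_1$-dense ideal into an $\omega_1$-dense one on a positive set, so the quoted lower bound transfers), and the upper bound being the tacit observation that a normal $\omega_1$-dense ideal is automatically weakly $\omega_1$-dense, i.e.\ that $\WD(\Pow(\omega_1)/\I) \ge \omega_1$. One small slip in your lower bound: there is no need to transport along a bijection $A \cong \omega_1$, and doing so would in general \emph{destroy} normality, since normality is tied to the ordinal structure (regressive functions, diagonal unions). The standard reading of $\I \restrict A$ is the ideal $\{ X \subset \omega_1 : X \cap A \in \I \}$ on $\omega_1$ itself, which is normal and $\omega_1$-dense outright; alternatively, note that the quoted equiconsistency for $\omega_1$-dense ideals does not require normality, so the slip is harmless either way.

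The genuine gap is the black box you invoke for the key step: there is no ``standard disjoint-refinement lemma for atomless $\sigma$-complete Boolean algebras,'' and the statement you appeal to is false at that level of generality. The Cohen algebra $\Borel(2^\omega)/\meager$ is atomless and $\sigma$-complete and has a \emph{countable dense} set (the classes of the basic clopen sets $[s]$, $s \in 2^{<\omega}$), hence a countable weakly dense set --- this is exactly why the paper's Table 2 records $\cp((\Borel(2^\omega)/\meager)^-) = \aleph_0$. By your own derivation (disjoint refinement plus halving each piece yields a single element splitting every member of the family), a disjoint refinement of $\{[s] : s \in 2^{<\omega}\}$ would produce a Borel set $S$ such that both $S$ and $S^\comp$ are nonmeager in every basic open set, which is impossible for a set with the Baire property. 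So $\sigma$-completeness and atomlessness cannot suffice; you must use that the algebra is $\Pow(\omega_1)/\I$ for a countably complete ideal $\I$ containing all singletons. There the step is true, via Ulam matrices: every $\I$-positive set splits into $\omega_1$ pairwise disjoint $\I$-positive subsets, and your recursion then closes --- at stage $n$, split $D_n$ minus the previously chosen pieces into $\omega_1$ disjoint positive pieces; each later $D_k$ can be contained mod $\I$ in at most one piece (two would force $D_k$ into the intersection of disjoint pieces), so only countably many pieces are vetoed and a good one remains. With that lemma substituted for the false one, your argument is correct and in fact supplies the detail the paper hides behind its \qed.
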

	
	\section{Questions}
	
	The following questions remain.
	
	\begin{question}
		\begin{enumerate}
			\item What are the values of $\cp((\nul \cap \Borel) \setminus \{\emptyset\})$ and $\cp((\meager \cap \Borel) \setminus \{\emptyset\})$?
			\item Can we prove $\cp(\I \setminus \fin, \subset^*) = \cof^*(\I)$ for every ideal on $\omega$? In particular, can we prove this inequality by Tukey reducibility?
			\item What are the values of $\mmac((\Borel(2^\omega)/\meager)^-)$ and $\mmac((\Borel(2^\omega)/\nul)^-)$?
			\item In Miller model, what are the values of $\cp(\beta \omega \setminus \omega, \leRK)$ and $\mmac(\beta \omega \setminus \omega, \leRK)$?
			\item Can we prove theorems in Section \ref{sec:nullideal} and \ref{sec:meagerideal} using Tukey reducibility?
		\end{enumerate}
	\end{question}
	
	\section{Acknowledgments}
	
	The author thanks Yusuke Hayashi for discussing this study with him.
	He also would like to thank Jörg Brendle, who suggested to him the idea of the proof of Theorem \ref{thm:cpnull}.
	In order to prove the results in Section \ref{sec:rudinkeisler}, The author was given helpful comments by Dilip Raghavan and Michael Hrušák.
	Theorem \ref{thm:mmacomegaomega} was obtained through private communication with Jorge Antonio Cruz Chapital.
	The result in Section \ref{sec:idealonomega1} is due to the private communication with Paul Larson.
	This work was supported by JSPS KAKENHI Grant Number JP22J20021.
	
	\nocite{*}
	\printbibliography[title={References}]
\end{document}